\theoremstyle{definition}
\newtheorem{nul}{}[section]
\newtheorem{dfn}[nul]{Definition}
\newtheorem{rmk}[nul]{Remark}
\newtheorem{cnstr}[nul]{Construction}
\newtheorem{exm}[nul]{Example}
\newtheorem{qst}{Question}
\newtheorem*{dfn*}{Definition}
\newtheorem*{axm*}{Axiom}
\newtheorem*{ntn*}{Notation}
\newtheorem*{exm*}{Example}
\newtheorem*{exr*}{Exercise}
\newtheorem*{int*}{Intuition}
\newtheorem*{qst*}{Question}
\newtheorem*{cnstr*}{Construction}
\theoremstyle{plain}
\newtheorem{thm}[nul]{Theorem}
\newtheorem{prop}[nul]{Proposition}
\newtheorem{lem}[nul]{Lemma}
\newtheorem{cor}{Corollary}[nul]
\newtheorem*{thm*}{Theorem}
\newtheorem*{prop*}{Proposition}
\newtheorem*{cor*}{Corollary}
\newtheorem*{lem*}{Lemma}
\newtheorem*{cnj*}{Conjecture}
\newtheorem*{ThomThm*}{Theorem 4.10 of \cite{ThomRing}}
\numberwithin{equation}{nul}
    \newtheoremstyle{TheoremNum}
			  {2mm}{2mm}              
        {\itshape}                      
        {}                              
        {\bfseries}                     
        {.}                             
        { }                             
        {\thmname{#1}\thmnote{ \bfseries #3}}
    \theoremstyle{TheoremNum}
\DeclareMathOperator{\smsh}{\wedge}
\begin{document}

\title{Nilpotence in $\mathbb{E}_n$-algebras}
\author{Jeremy Hahn}


\begin{abstract}
Nilpotence in the homotopy of $\mathbb{E}_\infty$-ring spectra is detected by the classical $H\mathbb{Z}$-Hurewicz homomorphism.  Inspired by questions of Mathew, Noel, and Naumann, we investigate the extent to which this criterion holds in the homotopy of $\mathbb{E}_n$-ring spectra.  For all odd primes $p$ and all chromatic heights $h$, we use the Cohen-Moore-Neisendorfer theorem to construct examples of $K(h)$-local, $\mathbb{E}_{2n-1}$-algebras with non-nilpotent $p^n$-torsion.  We exploit the interaction of the Bousfield-Kuhn functor on odd spheres and Rezk's logarithm to show that our bound is sharp at height $1$, and remark on the situation at height $2$. 
\end{abstract}
\maketitle

\tableofcontents

\vbadness=10000

\section{Introduction}

Fix a prime number $p$.  A landmark result of homotopy theory is Nishida's proof \cite{Nishida} that all positive degree elements in the stable homotopy groups of spheres are smash nilpotent.  Nishida saw fit to divide his paper into two parts, containing two rather different arguments:

\begin{enumerate}
\item The argument in Part I applies only to $p$-torsion elements $x \in \pi_*(\mathbb{S})$, but gives strong bounds on the minimum $n$ such that $x^n=0$. 
\item The argument in Part II applies to all $p$-power torsion $x \in \pi_*(\mathbb{S}),$ but gives weaker bounds on the exponent of nilpotence.
\end{enumerate}

With modern understanding, Nishida's two arguments generalize to two different theorems:

\begin{thm}[Hopkins-Mahowald]
Suppose that $R$ is an $\mathbb{E}_2$-ring spectrum and $x \in \pi_*(R)$ satisfies $px=0$.  Then $x$ is nilpotent if and only if its Hurewicz image in $H_*(R;\mathbb{F}_p)$ is nilpotent.
\end{thm}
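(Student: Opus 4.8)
The plan is to isolate a single input from the theorem of Hopkins and Mahowald --- the identification of $H\mathbb{F}_p$ with the Thom spectrum of an $\mathbb{E}_2$-map $\Omega^2 S^3 \to B\mathrm{GL}_1(\mathbb{S}_{(p)})$ --- and to deduce the statement formally from it. Concretely, I would use the following consequence of that Thom spectrum description: since $\Omega^2 S^3$ is the free $\mathbb{E}_2$-space on the circle, trivializing the associated bundle over $B\mathrm{GL}_1(S)$ is the same data as a nullhomotopy of $p \in \pi_0(S)$; hence \emph{for every $p$-local $\mathbb{E}_2$-ring $S$ in which $p = 0$ there is an $\mathbb{E}_2$-ring map $H\mathbb{F}_p \to S$}. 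Granting this, the ``only if'' direction is immediate, since $h\colon R \to R \wedge H\mathbb{F}_p$ is a ring map. For the converse, suppose the image of $x$ in $H_*(R;\mathbb{F}_p) = \pi_*(R \wedge H\mathbb{F}_p)$ is nilpotent and pick $N$ with $h(x^N) = h(x)^N = 0$. Since $x^N$ is again $p$-torsion and is nilpotent if and only if $x$ is, it suffices to show $x^N$ is nilpotent; so I may assume from the start that $h(x) = 0$, and I argue by contradiction, supposing $x$ is not nilpotent.

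Because $R$ is $\mathbb{E}_2$, the ring $\pi_*(R)$ is graded-commutative, so I may form the mapping telescope $x^{-1}R := \colim\bigl(R \xrightarrow{\,x\,} \Sigma^{-d}R \xrightarrow{\,x\,} \Sigma^{-2d}R \to \cdots\bigr)$, $d = |x|$, whose homotopy is $\pi_*(R)[x^{-1}]$. I would record as a lemma that this telescope inherits an $\mathbb{E}_2$-ring structure from $R$ --- i.e.\ that the localization of an $\mathbb{E}_n$-ring at a homogeneous homotopy element is again an $\mathbb{E}_n$-ring. Since $x$ is not nilpotent, $\pi_*(R)[x^{-1}] \neq 0$, so $x^{-1}R \neq 0$. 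Moreover $p$ vanishes in $\pi_0(x^{-1}R)$: from $1 = x \cdot x^{-1}$ one gets $p \cdot 1 = (px) \cdot x^{-1} = 0$, and the given nullhomotopy of $px$ propagates to a nullhomotopy of $p$ on $x^{-1}R$; in particular $x^{-1}R$ is $p$-local, as every prime other than $p$ is then a unit in $\pi_0$. The Hopkins--Mahowald input now furnishes an $\mathbb{E}_2$-ring map $H\mathbb{F}_p \to x^{-1}R$.

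Thus $x^{-1}R$ is an $H\mathbb{F}_p$-module, so by the free-forgetful adjunction between spectra and $H\mathbb{F}_p$-modules the localization map $R \to x^{-1}R$ factors through the Hurewicz map $R \to R \wedge H\mathbb{F}_p$. Passing to homotopy and using $h(x) = 0$, the image of $x$ in $\pi_*(R)[x^{-1}]$ is zero; but that image is a unit by construction, forcing $\pi_*(R)[x^{-1}] = 0$ and $x^{-1}R = 0$, contradicting the non-nilpotence of $x$. The one genuine obstacle is the multiplicative bookkeeping: one must verify that $x^{-1}R$ really is an $\mathbb{E}_2$-ring and not merely an $\mathbb{E}_1$-ring --- this is essential, since the free $\mathbb{E}_1$-ring on a nullhomotopy of $p$ (a form of $\mathbb{S}/\!\!/p$) is vastly larger than $H\mathbb{F}_p$ --- together with extracting the precise universal property of $H\mathbb{F}_p$ from the Hopkins--Mahowald Thom spectrum presentation. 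With those in hand, the rest of the argument is formal.
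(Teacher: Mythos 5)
The paper never proves this theorem: it is quoted as known, with \cite{MNN} cited for a proof, and the only in-paper treatment is the reformulation via Theorem \ref{UniversalMay} --- namely, the Hopkins--Mahowald criterion is equivalent to $K(h)\wedge R_{2,1}\simeq 0$ for all $h\ge 1$, which follows from the Thom-spectrum identification of $R_{2,1}$, the free $\mathbb{E}_2$-ring with $p=0$, with $H\mathbb{F}_p$; and the proof of Theorem \ref{UniversalMay} itself runs through the Devinatz--Hopkins--Smith nilpotence theorem. Your argument uses the same essential input (the universal property of $H\mathbb{F}_p$ among $p$-local $\mathbb{E}_2$-rings with $p=0$) but a genuinely different deduction: you bypass DHS entirely, replacing it with the telescope $x^{-1}R$, the observation that $p=(px)x^{-1}=0$ there, the resulting $H\mathbb{F}_p$-module structure, and the factorization of the localization map through the Hurewicz map, which forces the invertible image of $x$ to vanish. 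This is the standard ``direct'' folklore proof (in the spirit of Antol\'in-Camarena--Barthel), and it is more elementary and self-contained than the paper's route; what the paper's route buys is exactly the framework ($R_{n,k}$ plus DHS) needed for the higher-torsion, higher-$\mathbb{E}_n$ questions the rest of the paper is about, where no analogue of the $H\mathbb{F}_p$ identification is available. Your reductions (to $h(x)=0$ via $x^N$), the $p$-locality of $x^{-1}R$, and the module-factorization step are all correct as stated.

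The one load-bearing point is the one you flag: you need $x^{-1}R$ to be an $\mathbb{E}_2$-ring (or at least an $\mathbb{E}_2$-algebra target for the universal property), whereas Ore localization of a ring with graded-commutative homotopy only hands you an $\mathbb{E}_1$-structure for free; an $\mathbb{E}_1$-structure is genuinely insufficient here, since the Thom universal property is a statement about $\mathbb{E}_2$-algebra maps (once the map $H\mathbb{F}_p\to x^{-1}R$ exists you only use the induced left module structure, but you cannot get the map without the $\mathbb{E}_2$-structure on the target). So either prove or cite the lemma that inverting a homotopy element in an $\mathbb{E}_2$-ring yields an $\mathbb{E}_2$-ring under it. Note that the paper is in the same boat: its proof of Theorem \ref{UniversalMay} treats $L_{K(h)}A[x^{-1}]$ as an $\mathbb{E}_n$-algebra receiving a map from $R_{n,k}$ without comment, so your proposal is not incurring any debt the paper's own argument avoids; but in a self-contained write-up this localization lemma is the one step that requires a real reference or proof, and without it your argument as structured would only yield the statement for $\mathbb{E}_3$-rings.
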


\begin{thm}[May nilpotence conjecture]
Suppose that $R$ is an $\mathbb{E}_\infty$-ring spectrum and $x \in \pi_*(R)$ satisfies $p^kx=0$ for some integer $k$.  Then $x$ is nilpotent if and only if its Hurewicz image in $H_*(R;\mathbb{F}_p)$ is nilpotent.
\end{thm}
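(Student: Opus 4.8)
The ``only if'' direction is immediate, since the Hurewicz homomorphism $h\colon\pi_*(R)\to H_*(R;\mathbb{F}_p)$ is a ring map. For the converse I would first observe that if $h(x)^N=0$, then $x^N$ is still annihilated by $p^k$ and is nilpotent if and only if $x$ is, so we may replace $x$ by $x^N$ and assume $h(x)=0$. Next, $x$ is nilpotent if and only if the localization $B:=R[x^{-1}]$ vanishes --- this is elementary, being a statement about the graded ring $\pi_*(R)[x^{-1}]=\pi_*(B)$ --- and $B$ is again an $\mathbb{E}_\infty$-ring, as $\mathbb{E}_\infty$-rings are stable under inverting homotopy elements. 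So assume for contradiction that $B\neq 0$.

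The ring $B$ has two features. First, $p^k=p^k\cdot x\cdot x^{-1}=(p^kx)\cdot x^{-1}=0$ in $\pi_0(B)$, so $p$ is nilpotent in $B$; and since $p^k=0$ in $\pi_0(B)$ forces the self-map $p^k\colon B\to B$ to be null, $B$ is moreover $p$-complete. Second, writing $R[x^{-1}]$ as the telescope $\colim\bigl(R\xrightarrow{\,\cdot x\,}\Sigma^{-|x|}R\xrightarrow{\,\cdot x\,}\cdots\bigr)$ and smashing with $H\mathbb{F}_p$ exhibits $H\mathbb{F}_p\wedge B$ as a colimit along maps that are multiplication by $h(x)=0$; hence $H\mathbb{F}_p\wedge B=0$, i.e.\ $B$ is $H\mathbb{F}_p$-acyclic. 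So the theorem reduces to:
\emph{a nonzero $\mathbb{E}_\infty$-ring in which $p$ is nilpotent is not $H\mathbb{F}_p$-acyclic.}

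To attack this I would use the Hopkins--Mahowald presentation of $H\mathbb{F}_p$ as the free $\mathbb{E}_2$-ring with a nullhomotopy of $p$. Forming the $\mathbb{E}_\infty$-quotient $B /\!/ p$ (the initial $\mathbb{E}_\infty$-$B$-algebra with $p=0$), this is in particular an $\mathbb{E}_2$-ring with $p=0$, hence an $H\mathbb{F}_p$-algebra, and it is a module over $H\mathbb{F}_p\wedge B$; so if $B$ were $H\mathbb{F}_p$-acyclic we would get $B /\!/ p=0$. Thus it remains to see that $B /\!/ p\neq 0$. Here I would invoke the Nilpotence Theorem of Devinatz--Hopkins--Smith: since $B\neq 0$, the ring $MU_*(B)$ is nonzero, and being a nonzero commutative ring in which $p$ is nilpotent it has a residue field of characteristic $p$, so in particular $MU_*(B)/p\neq 0$. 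The work then lies in propagating this nonvanishing back to $B /\!/ p$ (equivalently, to $H\mathbb{F}_p\wedge B$), using that $B$, being $p$-power torsion, is acyclic with respect to $H\mathbb{Q}$ and to every $H\mathbb{F}_\ell$ with $\ell\neq p$, so that only the mod-$p$ direction can carry its homology --- and this is the heart of the matter.

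This last propagation step is where the full $\mathbb{E}_\infty$-hypothesis (as opposed to merely $\mathbb{E}_2$, as in the Hopkins--Mahowald theorem quoted above) is genuinely needed, and it is the main obstacle: I expect it to require $\mathbb{E}_\infty$-power-operation input --- Dyer--Lashof operations on $BP$-homology, or the Tate-valued Frobenius $B\to B^{tC_p}$ together with a Segal-conjecture-type vanishing --- along with careful bookkeeping with Bousfield classes, so as to rule out the ``purely chromatic'' scenario in which a nonzero $p$-complete $\mathbb{E}_\infty$-ring avoids $H\mathbb{F}_p$. That the $\mathbb{E}_\infty$-structure is essential is exactly the point of the counterexamples constructed in this paper, which show the conclusion fails for $\mathbb{E}_n$-rings with $n$ finite. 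The theorem is in this way the $\mathbb{E}_\infty$, $p$-power-torsion analogue of Nishida's Part~II argument, just as the Hopkins--Mahowald theorem above is the $\mathbb{E}_2$, simple-$p$-torsion analogue of his Part~I.
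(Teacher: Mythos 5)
Your formal reductions are fine: the ``only if'' direction, passing to $x^N$ to assume $h(x)=0$, replacing nilpotence of $x$ by vanishing of $B=R[x^{-1}]$, noting $p^k=0$ in $\pi_0(B)$ and $H\mathbb{F}_p\smsh B\simeq 0$, and observing via Hopkins--Mahowald that $B/\!\!/p$ is an $H\mathbb{F}_p\smsh B$-module, so the theorem reduces to showing $B/\!\!/p\not\simeq 0$ for a nonzero $\mathbb{E}_\infty$-ring $B$ with $p$ nilpotent in $\pi_0$. (Note, incidentally, that the paper does not prove this theorem at all; it is quoted from Mathew--Noel--Naumann, and the present paper's content is about what happens when $\mathbb{E}_\infty$ is weakened to $\mathbb{E}_n$.)

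The genuine gap is that the step you label ``the heart of the matter'' is the entire theorem, and you do not carry it out --- you only express the expectation that Dyer--Lashof operations, the Tate-valued Frobenius, or Bousfield-class bookkeeping should suffice. Knowing $MU_*(B)/p\neq 0$ does not propagate to $H\mathbb{F}_p\smsh B\neq 0$ by any formal or chromatic-bookkeeping argument: the obstruction is precisely the ``purely chromatic'' scenario of a nonzero $K(h)$-local ring with $p^k=0$, and this paper's Theorem \ref{MainCounterexampleTheorem} shows that scenario really occurs for $\mathbb{E}_{2n-1}$-rings, so any argument that only uses structure available in the $\mathbb{E}_n$-setting (as everything in your write-up does) cannot close it. What is needed is the substantive $\mathbb{E}_\infty$ power-operation input of [MNN]: working $K(h)$-locally with Morava $E$-theory, one uses the full symmetric power operations (in the spirit of the height-one logarithm computation of Section \ref{height1oddSection} and the explicit transfer-ideal calculations of Sections \ref{height1even}--\ref{height2}, but at all heights and with no truncation of the $B\Sigma_p$-power operations) to show that $p^k=0$ forces $1=0$ in any $K(h)$-local $\mathbb{E}_\infty$-ring, i.e.\ $K(h)\smsh R_{\infty,k}\simeq 0$ for all $h$; combined with rational vanishing and Devinatz--Hopkins--Smith this gives $B/\!\!/p\neq 0$. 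Until that argument (or a substitute for it) is supplied, your proposal is a correct reduction plus a statement of the problem, not a proof.
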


These theorems stand in contrast to the general nilpotence technology of Devinatz, Hopkins, and Smith \cite{NilpotenceI}, which does not require the ring spectrum $R$ to be even $\mathbb{E}_2$-structured, but does require the use of the more sophisticated homology theory $MU$ of complex bordism.  Proofs of both theorems may be found in the recent work of Mathew, Noel, and Naumann \cite{MNN}, including the first recorded proof of the May nilpotence conjecture.  A natural question arises:

\begin{qst} \label{MainQuestion}
Does the May nilpotence conjecture hold for $\mathbb{E}_n$-ring spectra with $2 \le n < \infty$, or is the $\mathbb{E}_\infty$-hypothesis necessary? 
\end{qst}

Following \cite{MNN}, we break Question \ref{MainQuestion} into a series of questions at each chromatic height.  To be more precise, in Section \ref{UniversalConstruction} we recall the construction of an $\mathbb{E}_n$-ring spectrum $R_{n,k}$, depending on integers $n,k>0$, that is the \textit{universal} $\mathbb{E}_n$-algebra in which $p^k=0$.  The following is then a simple consequence of the work of Devinatz, Hopkins, and Smith:

\begin{dfn}
We say that a ring spectrum $R$ \textit{satisfies the $k$th May nilpotence criterion} if, for all $x \in \pi_*(R)$, the two conditions
\begin{enumerate}
\item $x$ has nilpotent Hurewicz image in $H_*(R;\mathbb{F}_p)$, and
\item $p^k x =0$
\end{enumerate} 
together imply that $x$ is nilpotent in $\pi_*(R)$.
\end{dfn}

\begin{thm} \label{UniversalMay}
The following are equivalent:
\begin{enumerate}
\item Every $\mathbb{E}_n$-ring spectrum satisfies the $k$th May nilpotence criterion.
\item For all integers $h \ge 1$, $K(h) \smsh R_{n,k} \simeq 0$.
\end{enumerate}
\end{thm}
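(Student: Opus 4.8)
The plan is to deduce both implications from the universal property of $R_{n,k}$ together with the Morava $K$-theory form of the Devinatz--Hopkins--Smith nilpotence theorem, the decisive maneuver being to \emph{localize}: if $x\in\pi_d(R)$ satisfies $p^kx=0$, then in the telescope $x^{-1}R$ one has $p^k=(p^kx)\cdot x^{-1}=0$, so the hypothesis ``$p^kx=0$'' has been upgraded to ``$p^k=0$'', which is exactly what $R_{n,k}$ knows about.

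For $(2)\Rightarrow(1)$, let $R$ be an $\mathbb{E}_n$-ring and $x\in\pi_d(R)$ with $p^kx=0$ and nilpotent Hurewicz image in $H_*(R;\mathbb{F}_p)$, and suppose for contradiction that $x$ is not nilpotent, so that $x^{-1}R$ is not contractible. By Lurie's theory of localizations of structured ring spectra, $x^{-1}R$ is again an $\mathbb{E}_n$-ring, and as noted $p^k=0$ in $\pi_0(x^{-1}R)$; the universal property of $R_{n,k}$ then supplies an $\mathbb{E}_n$-ring map $R_{n,k}\to x^{-1}R$, exhibiting $x^{-1}R$ as an $R_{n,k}$-algebra. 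I would now check that $K(h)\smsh x^{-1}R\simeq 0$ for every prime and every $0\le h\le\infty$: for $1\le h<\infty$ at $p$ because $K(h)\smsh x^{-1}R$ is an algebra over $K(h)\smsh R_{n,k}\simeq 0$; for $h=\infty$ because $H\mathbb{F}_p\smsh x^{-1}R\simeq(H\mathbb{F}_p\smsh R)[x^{-1}]\simeq 0$, the image of $x$ in $H_*(R;\mathbb{F}_p)$ being nilpotent; and for $h=0$ and for the primes $q\neq p$ because $\pi_*(x^{-1}R)$ is $p^k$-torsion. Applying the nilpotence theorem to the unit $1\in\pi_0(x^{-1}R)$ — whose image in every $K(h)_*(x^{-1}R)$ vanishes, hence is nilpotent — forces $1$ to be nilpotent, i.e. $x^{-1}R\simeq 0$, the desired contradiction. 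Hence $x$ is nilpotent, so $R$ satisfies the $k$th May criterion.

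For $(1)\Rightarrow(2)$ I would argue contrapositively. Assume $K(h_0)\smsh R_{n,k}\not\simeq 0$ for some $h_0\ge 1$ and put $A:=L_{K(h_0)}R_{n,k}$. Because $K(h_0)$-localization preserves $\mathbb{E}_n$-algebras, $A$ is an $\mathbb{E}_n$-ring; it is not contractible since $K(h_0)\smsh A\simeq K(h_0)\smsh R_{n,k}\not\simeq 0$; and the unit map $R_{n,k}\to A$ forces $p^k=0$ in $\pi_0(A)$. The key observation is that, $A$ being $K(h_0)$-local with $h_0\ge1$, one has $H\mathbb{F}_p\smsh A\simeq 0$: a $K(h)$-local spectrum with $h\ge1$ is $E(h)$-local, hence — as $L_h$ is smashing — is equivalent to its own smash with $L_h\mathbb{S}$, so that $H\mathbb{F}_p\smsh A$ is an $E(h)$-acyclic $H\mathbb{F}_p$-module and therefore vanishes. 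Consequently the unit $1\in\pi_0(A)$ satisfies $p^k\cdot 1=0$ and has nilpotent (in fact trivial) Hurewicz image in $H_*(A;\mathbb{F}_p)=0$, yet is not nilpotent in $\pi_*(A)$ because $A\not\simeq 0$; so $A$ violates the $k$th May nilpotence criterion, and (1) fails.

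The crux is the localization step in $(2)\Rightarrow(1)$; once it is in place, everything else is routine bookkeeping with the nilpotence theorem and with the standard vanishing of Morava $K$-theory and of mod-$p$ homology on $K(h)$-local spectra. The two points deserving care are that $x^{-1}R$ must genuinely be $\mathbb{E}_n$ rather than merely $\mathbb{E}_1$ — it is the $\mathbb{E}_n$-universal property of $R_{n,k}$ that is invoked — and that the Morava $K$-theory reformulation of the nilpotence theorem be used over the \emph{full} range $0\le h\le\infty$, since the endpoints $h=0$ and $h=\infty$ are both genuinely needed for it to detect nilpotence.
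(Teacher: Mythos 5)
Your proof is correct and is essentially the paper's own argument: both directions rest on the universal property of $R_{n,k}$ applied to the localization $x^{-1}R$ (where $p^kx=0$ upgrades to $p^k=0$) together with the Devinatz--Hopkins--Smith nilpotence theorem, and on the smashing property of $L_h$ to kill the mod-$p$ homology of the $K(h)$-local algebra $L_{K(h)}R_{n,k}$. The only differences are organizational: you argue $(1)\Rightarrow(2)$ contrapositively and spell out the $h=0$, $h=\infty$, and $q\neq p$ cases that the paper leaves implicit in its citation of the nilpotence theorem.
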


The theorem of Hopkins and Mahowald may thus be restated as the observation that $R_{2,1}$ is $K(h)$-acyclic for all $h$.  In yet other words, the free $\mathbb{E}_2$-algebra with $p=0$ is $K(h)$-acyclic for all heights $h$.

In remarks following \cite[Proposition $4.19$]{MNN}, Mathew, Noel, and Naumann state that they do not know whether $K(h) \smsh R_{2,2}$ is null for any $h$.  In the best of all worlds, one might hope that $K(h) \smsh R_{2,k} \simeq 0$ for all $h$ and $k-$this would imply the May nilpotence conjecture for $\mathbb{E}_2$-algebras.  The main observation of this work is that we do not live in such a world:

\begin{thm} \label{MainCounterexampleTheorem}
Suppose $p$ is an odd prime.  Then, for all integers $n,h \ge 1$, $$K(h) \smsh R_{2n-1,n} \not\simeq 0.$$  In other words, $L_{K(h)}R_{2n-1,n}$ is an example of a $K(h)$-local, $\mathbb{E}_{2n-1}$-algebra with a non-nilpotent but $p^n$-torsion element.
\end{thm}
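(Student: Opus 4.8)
The plan is to reduce the theorem to the construction of a single, carefully chosen $\mathbb{E}_{2n-1}$-ring. Suppose, for a given odd $p$ and given $n,h\ge 1$, that one has produced a nonzero $K(h)$-local $\mathbb{E}_{2n-1}$-ring $A$ with $p^n=0$ in $\pi_0(A)$. Because $R_{2n-1,n}$ is the \emph{universal} $\mathbb{E}_{2n-1}$-ring in which $p^n$ vanishes, this vanishing produces an $\mathbb{E}_{2n-1}$-ring map $R_{2n-1,n}\to A$, which factors through $R_{2n-1,n}\to L_{K(h)}R_{2n-1,n}$ since $A$ is $K(h)$-local. The resulting map $L_{K(h)}R_{2n-1,n}\to A$ carries $1$ to $1\neq 0$, so $L_{K(h)}R_{2n-1,n}\not\simeq 0$; as $X\to L_{K(h)}X$ is always a $K(h)_*$-equivalence and a nonzero $K(h)$-local spectrum is never $K(h)$-acyclic, this forces $K(h)\smsh R_{2n-1,n}\simeq K(h)\smsh L_{K(h)}R_{2n-1,n}\not\simeq 0$. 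The final sentence of the theorem is then immediate, since $L_{K(h)}R_{2n-1,n}$ is itself a nonzero, $p^n$-torsion, $K(h)$-local $\mathbb{E}_{2n-1}$-ring; and via Theorem \ref{UniversalMay} the same conclusion shows that the $n$th May nilpotence criterion can fail for $\mathbb{E}_{2n-1}$-rings. So everything comes down to building $A$.

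To construct $A$ I would feed in the Cohen--Moore--Neisendorfer theorem. For $p$ odd it upgrades the exponent statement ``$p^n$ annihilates the torsion in $\pi_*(S^{2n+1})$'' to the geometric fact that the double suspension $S^{2n-1}\to\Omega^2 S^{2n+1}$ retracts up to multiplication by $p$; iterating $n$ times exhibits $p^n$ as null on a highly connected cover of the $(2n-1)$-fold loop space $\Omega^{2n-1}S^{2n+1}$ (equivalently, on the iterated loop spaces of mod $p$ Moore spaces studied by Cohen--Moore--Neisendorfer, which are again $(2n-1)$-fold loop spaces --- here it is essential that we only demand $2n-1$ loops rather than infinitely many). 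Applying $\Sigma^\infty_+$ makes this $(2n-1)$-fold loop space into an $\mathbb{E}_{2n-1}$-ring, and I would then impose the relation $p^n=0$ by twisting along an appropriate stable spherical fibration, i.e.\ by realizing the desired quotient as (part of) a Thom spectrum over that loop space; the Cohen--Moore--Neisendorfer retraction is precisely what makes the twist exist coherently as $\mathbb{E}_{2n-1}$-structured data. The non-nilpotent element of the resulting $A$ is then the $v_h$-periodic class coming from the $v_h$-periodic homotopy of $S^{2n+1}$.

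The main obstacle is showing $A$ is \emph{not} $K(h)$-acyclic: this is the one step that cannot be formal, since the exactly parallel free $\mathbb{E}_2$-algebra $R_{2,1}$ \emph{is} $K(h)$-acyclic for every $h$, so genuinely unstable input is required. I would compute $K(h)_*(A)$ by combining the Thom isomorphism with the Morava $K$-theory of the underlying $(2n-1)$-fold loop space; in the Cohen--Moore--Neisendorfer picture this Morava $K$-theory is computable and is controlled by the Bousfield--Kuhn functor $\Phi_h(S^{2n+1})$, which is known to be nonzero (e.g.\ by the classical computations of Mahowald and Thompson at height one, and in general because the $v_h$-periodic homotopy of odd spheres does not vanish). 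The crux is then verifying that the spherical-fibration twist does not annihilate the corresponding $K(h)$-homology Hurewicz image, so that $L_{K(h)}A\neq 0$; with that established, the reduction of the first paragraph yields $K(h)\smsh R_{2n-1,n}\not\simeq 0$ and identifies $L_{K(h)}R_{2n-1,n}$ as the advertised example.
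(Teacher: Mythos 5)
Your opening reduction is fine: producing any nonzero $K(h)$-local $\mathbb{E}_{2n-1}$-ring with $p^n=0$ in $\pi_0$ would, by the universal property of $R_{2n-1,n}$, force $K(h)\smsh R_{2n-1,n}\not\simeq 0$. But that reduction is not where the content lies, and the construction-plus-detection you sketch has a genuine gap precisely at the step you yourself flag as the crux. Your plan is to compute $K(h)_*$ of a Thom spectrum over a $(2n-1)$-fold loop space by ``Thom isomorphism plus the Morava $K$-theory of the base, controlled by $\Phi_h(S^{2n+1})$,'' and then to check the twist does not kill the unit. Two problems: first, the Morava $K$-theory of a space is not controlled by the Bousfield--Kuhn functor (that governs $v_h$-periodic homotopy, not $K(h)_*$ of the space), and second, no argument of the form ``the base has nonzero $K(h)_*$, hence so does the Thom spectrum'' can work, because the twist is not $K(h)$-orientable in general: the free $\mathbb{E}_2$-ring with $p=0$ is $H\mathbb{F}_p$, which is $K(h)$-acyclic even though $K(h)_*(\Omega^2S^3)\neq 0$. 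You acknowledge this (``the crux is then verifying that the spherical-fibration twist does not annihilate the corresponding $K(h)$-homology Hurewicz image'') but you propose no mechanism for it, so the proposal stops exactly where the proof has to begin. There is also a structural slip: $R_{2n-1,n}$ is a Thom spectrum over $\Omega^{2n-1}S^{2n}$, an iterated loop space of an \emph{even} sphere, whereas Cohen--Moore--Neisendorfer applies to odd spheres; your sketch works with $\Omega^{2n-1}S^{2n+1}$ and never confronts this mismatch. Moreover CMN does not make $p^n$ null on (a cover of) the loop space; it factors the power map through a bottom cell, which is a very different and, as it turns out, much more useful statement.

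The paper's actual argument supplies exactly the missing detection, and it does so without computing any $K(h)$-homology of loop spaces. One first splits $\Omega S^{2n}\simeq S^{2n-1}\times\Omega S^{4n-1}$ $p$-locally (Serre), writes $1+p^n=(1+p\alpha)^{p^{n-1}}$ for a unit $\alpha$ (Lemma \ref{NumberLemma}), and applies CMN $(n-1)$ times to factor the $p^{n-1}$ power map on $\Omega^{2n-2}S^{2n-1}$ through $E^{2n-2}:S^1\to\Omega^{2n-2}S^{2n-1}$. This factors the defining map $\widetilde{1+p^n}$ of $R_{2n-1,n}$ through $f:S^1\times\Omega S^{2n+1}\to BGL_1(\mathbb{S})$, so the unit of $\mathrm{Thom}(f)\simeq \mathbb{S}/p\smsh\mathrm{Thom}(f_2)$ factors through $R_{2n-1,n}$, where $\mathrm{Thom}(f_2)$ is the free $\mathbb{A}_\infty$-ring killing some class $\gamma\in\pi_{2n-1}(\mathbb{S})$. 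Detection is then cheap: $\gamma$ lies in odd degree, hence vanishes in $\pi_*MU$, so $\mathrm{Thom}(f_2)$ admits a ring map to $L_{K(h)}MU$, and the unit of $\mathbb{S}/p\smsh\mathrm{Thom}(f_2)$ is seen in $K(h)\smsh MU/p\neq 0$. In other words, the nontriviality comes from mapping \emph{out} to $MU/p$, not from $v_h$-periodic homotopy of spheres or a $K(h)_*$ computation of the Thom spectrum. To repair your proposal you would need to replace the ``Thom isomorphism / $\Phi_h$'' paragraph with some detection device of this kind; as written, the claim $L_{K(h)}A\neq 0$ is asserted rather than proved.
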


Our proof of Theorem \ref{MainCounterexampleTheorem}, undertaken in Section \ref{CounterexampleSection}, relies on deep work of Cohen, Moore, and Neisendorfer \cite{CMNI,CMNII,Neisendorfer} that only holds at odd primes.  Specifically, using their work, we construct a highly non-trivial map 
$$R_{2n-1,n} \longrightarrow MU/p,$$
with image the complex cobordism spectrum mod $p$.  The composite
$$
\begin{tikzcd}
MU \arrow{r}{\text{unit}} & MU \smsh R_{2n-1,n} \arrow{r} & MU/p
\end{tikzcd}
$$
is the canonical map expressing $MU/p$ as the cofiber of the multiplication by $p$ map on $MU$, and it follows that $R_{2n-1,n}$ cannot be $K(h)$-acyclic.  

\begin{rmk} If $k_1>k_2 \gg n$, our arguments with the Cohen-Moore-Neisendorfer theorem imply that the natural map
$$\mathbb{S}/k_1 \longrightarrow \mathbb{S}/k_2$$
factors through an $\mathbb{E}_n$-algebra.  This induces curious power operations on the homotopy of Moore spectra (see Remark \ref{MoorePow}).
\end{rmk}

Having shown that the $n$th May nilpotence criterion does not hold in $\mathbb{E}_{2n-1}$-algebras, we next ask whether it holds in $\mathbb{E}_{2n}$-algebras.  When $n=1$, this is true, and is again the theorem of Hopkins and Mahowald.  In general, we are able to show at least the following fact:

\begin{thm} \label{introheight1thm}
There is no $K(1)$-local, $\mathbb{E}_{2n}$-algebra with non-nilpotent $p^n$-torsion.  In other words,
$$K(1) \smsh R_{2n,n} \simeq 0.$$
\end{thm}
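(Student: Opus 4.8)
The plan is to begin from Theorem~\ref{UniversalMay}, which reduces the statement to proving directly that $L_{K(1)}R_{2n,n}\simeq 0$, and then to exploit a Thom spectrum presentation of $R_{2n,n}$. Recall that the theorem of Hopkins and Mahowald identifies $R_{2,1}\simeq H\mathbb{F}_p$ with the Thom spectrum of an $\mathbb{E}_2$-map $\Omega^2 S^3\to B\mathrm{GL}_1(\mathbb{S}_{(p)})$. I expect the analogous description to hold in general: since $\Omega^{2n}S^{2n+1}$ is the free grouplike $\mathbb{E}_{2n}$-space on $S^1$, the map $S^1\to B\mathrm{GL}_1(\mathbb{S}_{(p)})$ classifying the unit $1-p^n\in\mathbb{Z}_{(p)}^{\times}$ extends to an $\mathbb{E}_{2n}$-map $g\colon\Omega^{2n}S^{2n+1}\to B\mathrm{GL}_1(\mathbb{S}_{(p)})$, and the universal property of Thom ring spectra identifies the resulting Thom spectrum $Mg$ with $R_{2n,n}$: the restriction of $Mg$ along the bottom cell $S^1\hookrightarrow\Omega^{2n}S^{2n+1}$ is $\mathbb{S}/p^n$, and the universal property says exactly that ``$p^n=0$'' (here one uses that $R_{2n,n}$ is automatically $p$-complete, since $p^n\cdot 1=0$ makes all of its homotopy $p^n$-torsion).

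Next I would $K(1)$-localize. Because $L_{K(1)}\mathbb{S}$ is an $\mathbb{E}_\infty$-ring, $L_{K(1)}R_{2n,n}$ is the Thom spectrum, in $L_{K(1)}\mathbb{S}$-modules, of the $\mathbb{E}_{2n}$-extension $\widetilde g\colon\Omega^{2n}S^{2n+1}\to B\mathrm{GL}_1(L_{K(1)}\mathbb{S})$ of the map classifying the image of $1-p^n$ in $\pi_0\mathrm{GL}_1(L_{K(1)}\mathbb{S})$; equivalently $L_{K(1)}R_{2n,n}\simeq\colim_{\Omega^{2n}S^{2n+1}}\mathcal L$ for the associated local system $\mathcal L$ of invertible $L_{K(1)}\mathbb{S}$-modules, and the goal is that this colimit vanishes. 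Over the bottom cell alone the colimit is merely $L_{K(1)}\mathbb{S}/p^n\neq 0$, so all of the vanishing must be produced by the higher cells of $\Omega^{2n}S^{2n+1}$, that is, by the $\mathbb{E}_{2n}$-structure; this is consistent with the fact that $K(1)$ is not even $\mathbb{E}_2$, so that the Thom twist does \emph{not} become nullhomotopic after base change to $K(1)$-modules and genuinely records the $\mathbb{E}_{2n}$-data.

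To analyze $\widetilde g$ I would apply the Bousfield--Kuhn functor $\Phi=\Phi_1$. Since $\Phi$ commutes with finite loops, $\Phi(\Omega^{2n}S^{2n+1})\simeq\Sigma^{-2n}\Phi(S^{2n+1})$, and $\Phi(S^{2n+1})$ is the explicitly computed finite $K(1)$-local spectrum of Bousfield, Mahowald--Thompson, and Kuhn: up to a suspension it is a cofiber (equivalently, fiber) of a stable Adams operation $\psi^{q}-1$ on the $K(1)$-local sphere. Meanwhile $B\mathrm{GL}_1(L_{K(1)}\mathbb{S})$ is an infinite loop space, and applying $\Phi$ together with Rezk's logarithmic operation $\ell_1\colon\mathrm{gl}_1(L_{K(1)}\mathbb{S})\to L_{K(1)}\mathbb{S}$ converts the $B\mathrm{GL}_1$-valued twist into additive data: $\Phi(\widetilde g)$ is then pinned down by its restriction to the bottom cell $S^1$ together with the single relation forced by $\ell_1$, whose defining factor of $\tfrac1p$ is precisely what matches the torsion order $p^n$ against the amount $2n$ of looping. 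Comparing the $\psi^{q}-1$ appearing in $\Phi(S^{2n+1})$ with the Adams-operation twist carried by $\widetilde g$, and feeding the result through the cell filtration of the Thom spectrum, the colimit should be seen to vanish, whence $L_{K(1)}R_{2n,n}\simeq 0$ and therefore $K(1)\smsh R_{2n,n}\simeq 0$.

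The main obstacle is making the previous paragraph precise: identifying the twisting map $\widetilde g$, or its image under $\Phi_1$, sharply enough to force the collapse. Concretely one must (i) control how the $\mathbb{E}_{2n}$-structure maps of $\Omega^{2n}S^{2n+1}$ interact with $K(1)$-local multiplicative structure --- this is exactly where Rezk's logarithm and the failure of $K(1)$ to be $\mathbb{E}_2$ both intervene --- and (ii) verify the numerology, namely that $2n$ loops, and not $2n-1$, is the threshold at which the logarithm forces the $p^n$-divisibility of the unit needed to produce a contradiction. Step (ii) is also what accounts for the sharp dividing line between the present theorem and Theorem~\ref{MainCounterexampleTheorem}.
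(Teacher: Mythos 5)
Your outline lands in the same family as the paper's odd-primary argument (Section \ref{height1oddSection}): Thom-spectrum presentation, Bousfield--Kuhn functor, Rezk's logarithm, and the $v_1$-periodic homotopy of odd spheres. But the steps you defer as ``the main obstacle'' are the entire content of the proof, and the mechanism you propose for the endgame is not the one that works. The paper never tries to show that the colimit defining the Thom spectrum vanishes via a cell filtration of $\Omega^{2n}S^{2n+1}$. Instead it uses orientation theory: the composite $\Omega^{2n}S^{2n+1} \xrightarrow{\widetilde{1+p^n}} BGL_1(\mathbb{S}) \to BGL_1(R_{2n,n})$ is canonically null. Looping once and applying $\Phi_1$ (together with the identification $L_{K(1)}gl_1 \simeq L_{K(1)}$ underlying the logarithm) yields a null composite $\Phi_1(\Omega^{2n+1}S^{2n+1}) \to L_{K(1)}\mathbb{S} \xrightarrow{\log(1+p^n)} L_{K(1)}\mathbb{S} \to L_{K(1)}R_{2n,n}$. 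Two concrete inputs then finish the proof: first, Rezk's formula gives $\log(1+p^n) \equiv p^{n-1} \bmod p^n$, so the image of $\log(1+p^n)$ in $\pi_0(L_{K(1)}R_{2n,n})$ is $p^{n-1}$; second, $\Phi_1(\Omega^{2n+1}S^{2n+1}) \simeq L_{K(1)}\mathbb{S}^{-1}/p^n$ with the map to $L_{K(1)}\mathbb{S}$ being the connecting map of the cofiber sequence for multiplication by $p^n$, so the nullhomotopy forces $p^{n-1}$ to be a multiple of $p^n$, i.e.\ $p^{n-1}=0$ in $\pi_0(L_{K(1)}R_{2n,n})$. An induction on $n$ (the localization is in particular an $\mathbb{E}_{2n-2}$-algebra with $p^{n-1}=0$, hence receives a unital map factoring through $L_{K(1)}R_{2n-2,n-1} \simeq 0$) gives $1=0$.

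Concretely, the gaps are: (a) you never invoke the nullity of the composite into $BGL_1(R_{2n,n})$, which is what converts the universal property into a usable homotopy; your substitute --- forcing the Thom colimit to collapse by comparing twists cell by cell --- is not carried out and is a substantially harder problem than the $\pi_0$ contradiction the paper actually derives. (b) The congruence $\log(1+p^n)\equiv p^{n-1} \bmod p^n$, which is exactly your ``numerology'' item (ii) and the reason $2n$ loops suffice while Theorem \ref{MainCounterexampleTheorem} blocks $2n-1$, is never computed. (c) Your description of $\Phi_1(S^{2n+1})$ as ``a cofiber of $\psi^q-1$ on the $K(1)$-local sphere'' is off: the fiber of $\psi^q-1$ on $p$-complete $K$-theory is the $K(1)$-local sphere itself, whereas what is needed (and cited from Davis/Kuhn) is that $\Phi_1(\Omega^{2n+1}S^{2n+1})$ is the desuspended $K(1)$-local mod $p^n$ Moore spectrum, together with the identification of its map to $L_{K(1)}\mathbb{S}$; no comparison of Adams-operation twists enters. (d) The induction from $p^{n-1}=0$ to $R\simeq 0$ is absent. (e) The theorem is asserted at all primes, but the logarithm computation as you would run it uses torsion-freeness of $\pi_0 L_{K(1)}\mathbb{S}$, valid only for $p$ odd; the paper handles $p=2$ separately in Section \ref{height1even} by a power-operation and transfer argument in $2$-adic $K$-theory (noting a minor modification of the logarithm argument would also work).
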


We prove Theorem \ref{introheight1thm} at all primes, and not merely the odd ones.  This raises the following question, to which we do not know the answer:

\begin{qst}
For any $n>1$, is there a non-trivial, $K(1)$-local, $\mathbb{E}_{2n-1}$-algebra $R$ such that $2^n=0$ in $\pi_*(R)$?
\end{qst}

Let us discuss a bit about our strategy for proving Theorem \ref{introheight1thm}.  The May nilpotence conjecture for $\mathbb{E}_\infty$-ring spectra is proven by use of power operations in Morava $E$-theory.  One wants to know what shadows of those power operations are still available in $\mathbb{E}_{\ell}$-ring spectra, and this is of course connected to study of the map 
\begin{equation} \label{equhi}
\Omega^{\ell} S^{\ell} \longrightarrow QS^0.
\end{equation}

Applying the Bousfield-Kuhn functor $\Phi_1$ \cite{BousfieldFunctor} to (\ref{equhi}) allows us to easily reason about at least one power operation, namely the Rezk logarithm \cite{RezkLog}.  In Section \ref{height1oddSection}, we use the logarithm to prove Theorem \ref{introheight1thm} at odd primes.  A minor modification of the argument would also prove Theorem \ref{introheight1thm} at the prime $2$, but, with an eye toward providing a second set of techniques, we give a different proof for $p=2$ in Section \ref{height1even}.

Our proof at $p=2$ is based on the fact that $\text{Config}(2,\mathbb{R}^\ell)_{h\Sigma_2} \simeq \mathbb{RP}^{\ell-1}$, which has computable Morava $E$-theory (at any height).  In particular, we are able to easily analyze the height two situation in Section \ref{height2}:

\begin{thm} \label{Investigation}
Let $E$ denote a Morava $E$-theory at height $2$, to be defined more explicitly in Section \ref{height2}.  Suppose that $R$ is a $K(2)$-local, $\mathbb{E}_4$-$E$-algebra and that $4=0$ in $\pi_0(R)$.  Then 
$$2u_1^2=u_1^6=0 \in \pi_0(R).$$
If $R$ is furthermore an $\mathbb{E}_{12}$-$E$-algebra, then $1=0$ in $\pi_0(R)$ and so $R \simeq 0$.
\end{thm}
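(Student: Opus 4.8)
The plan is to exploit the power operations carried by an $\mathbb{E}_\ell$-structure, which in arity $2$ are governed by the space $\mathrm{Config}(2,\mathbb{R}^\ell)_{h\Sigma_2}\simeq\mathbb{RP}^{\ell-1}$. For an $\mathbb{E}_\ell$-$E$-algebra $R$ and a class $x\in\pi_0 R$, forming the second extended power of $x$ and applying the arity-$2$ part of the structure map yields a ``truncated total power operation'' $P(x)\in R^0(\mathbb{RP}^{\ell-1})$ which refines the square $x^2$ and is the restriction along $\mathbb{RP}^{\ell-1}\hookrightarrow B\Sigma_2$ of the operation an $\mathbb{E}_\infty$-structure would produce. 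Since $E$ is $\mathbb{E}_\infty$ and $E\to R$ is $\mathbb{E}_\ell$, the operation $P$ is multiplicative, satisfies $P(1)=1$, is additive up to the transfer, $P(x+y)=P(x)+P(y)+\mathrm{tr}(1)\cdot xy$, and on classes pulled back from $E^0$ is computed by the Morava $E$-theory total power operation restricted to $E^0(\mathbb{RP}^{\ell-1})$. That target is classical: $E^0(\mathbb{RP}^\infty)=E^0[[t]]/[2]_F(t)$, with explicit truncations such as $E^0(\mathbb{RP}^3)=E^0[[t]]/(2t,t^2)$, so everything is computable once the formal group of $E$ is made explicit as in Section~\ref{height2}.

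First I would record three inputs. (i) The transfer class $\mathrm{tr}(1)\in E^0(B\Sigma_2)$ is $[2]_F(t)/t$ up to a unit; its constant term is $2$ (restriction to a point is multiplication by the index $2$), and its $t$-coefficient is a unit multiple of $u_1$ — the height-$2$ fact that the $t^2$-coefficient of $[2]_F(t)$ is, up to a unit, the deformation parameter $u_1$. (ii) The projection formula gives $\mathrm{tr}(1)^2=2\,\mathrm{tr}(1)$. (iii) The hypothesis $4=0$ in $\pi_0 R$, which makes every element $2a$ square-zero and kills the ``$4$-terms'' below. Additivity then yields $P(2)=P(1)+P(1)+\mathrm{tr}(1)=2+\mathrm{tr}(1)$, so that modulo $4=0$ the class $P(2)$ equals $u_1 t$ up to units and higher powers of $t$.

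For $\ell=4$ one plays $P$ off against the square-zero elements $2$, $2u_1$ and $2(1+u_1)$ and combines the resulting constraints with $\mathrm{tr}(1)^2=2\,\mathrm{tr}(1)$, working inside the ring $R^0(\mathbb{RP}^3)$; the outcome is that the relations forced in $\pi_0 R$ are exactly $2u_1^2=0$ and $u_1^6=0$. By the same method, the larger space $\mathbb{RP}^{11}$ available for $\ell=12$ yields, in addition, the relation $2=0$ in $\pi_0 R$. Once $2=0$, the ring $R$ is a $K(2)$-local $\mathbb{E}_2$-ring in which $2$ vanishes, so the universal property of $R_{2,1}$ gives a ring map $R_{2,1}\to R$; as $R$ is $K(2)$-local this factors through $L_{K(2)}R_{2,1}$, which is $\simeq 0$ by the Hopkins--Mahowald theorem ($R_{2,1}$ is $K(h)$-acyclic). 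Hence $R_{2,1}\to R$ is null, and since a ring map sends $1$ to $1$ we get $1=0$ in $\pi_0 R$, i.e.\ $R\simeq 0$.

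The formal identities above are the soft part; the real work is the $E$-theoretic bookkeeping over $\mathbb{RP}^{\ell-1}$. Even for $\ell=4$, where $\mathbb{RP}^3$ is small, the ring $R^0(\mathbb{RP}^3)$ is subtler than $\pi_0 R\otimes_{E^0}E^0(\mathbb{RP}^3)$: since $E^*(\mathbb{RP}^3)$ is not a finitely generated free $E^*$-module (it has nontrivial odd part and torsion), $R^0(\mathbb{RP}^3)$ carries extra summands in the shadow of $\pi_{-1}R$ and $\pi_{-3}R$, and its ring structure has products not detected by $E^0(\mathbb{RP}^3)$; the same issues, amplified, occur for $\mathbb{RP}^{11}$. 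Tracking these precisely — so as to extract honest relations in $\pi_0 R$ rather than artifacts of the truncation, and to see why $\ell=4$ yields only $2u_1^2$ and $u_1^6$ while $\ell=12$ yields $2$ — is where the explicit description of $E$ and its height-$2$ formal group from Section~\ref{height2} does the work, and is the main obstacle.
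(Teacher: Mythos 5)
Your overall strategy -- arity-$2$ power operations supported on real projective spaces, the explicit height-$2$ $E$-theory of $B\Sigma_2$, and the transfer formula $P(x+y)=P(x)+P(y)+x y\,\mathrm{tr}(1)$ -- is indeed the skeleton of the paper's argument, and your endgame (once $2=0$, quote Hopkins--Mahowald to kill $R$) would be an acceptable substitute for the paper's Example \ref{helpexm}. But as written the proposal has two genuine gaps. First, the constraint you have available applies only to elements that are \emph{zero} in $\pi_0(R)$: if $x=0$ then $P_R(x)=P_R(0)=0$ (in the paper's formulation, Proposition \ref{BetterPowOp}: the class $P(x)$ pushed into $R$ factors through the stunted projective spectrum). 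You instead propose to ``play $P$ off against the square-zero elements $2$, $2u_1$ and $2(1+u_1)$''; under the hypothesis $4=0$ none of these is known to vanish in $\pi_0(R)$ (whether $2$ must vanish is essentially the open question the paper poses), and square-zero only gives the much weaker statement $P(x)^2=0$, which there is no reason to believe forces $2u_1^2=u_1^6=0$. The paper instead bootstraps through elements already known to be zero: $x=4$ gives $2u_1^2=0$, then $x=2u_1^2$ gives $u_1^6=0$; in the $\mathbb{E}_{12}$ case $x=4$, then $x=2u_1$, then $x=u_1^3+2$, then $x=u_1$ successively give $2u_1=0$, $2+u_1^3=0$, $u_1=0$, and triviality. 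You assert the outcome of your computation but never produce such a chain, and the computation \emph{is} the theorem.

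Second, you locate the computation in $R^0(\mathbb{RP}^{\ell-1})$ for an $\mathbb{E}_\ell$-structure ($\mathbb{RP}^3$ and $\mathbb{RP}^{11}$), and you correctly observe that these odd projective spaces have nonzero $E^1$ and non-free $E^*$, so that $R^0$ of them is contaminated by $\pi_{-1}R$, $\pi_{-3}R$, etc.; you then name this bookkeeping as the ``main obstacle'' without resolving it. The paper never faces this obstacle, for two reasons you are missing. The universal property of the free algebra gives a nullhomotopy over the whole space $\Omega^{n}S^{n+1}$, and looping it once produces a constraint over $\mathrm{Config}(2,\mathbb{R}^{n+1})_{h\Sigma_2}\simeq\mathbb{RP}^{n}$ -- i.e.\ the \emph{even} spaces $\mathbb{RP}^4$ and $\mathbb{RP}^{12}$, one cell more than the arity-$2$ part of the $\mathbb{E}_n$-structure alone sees, and exactly the extra coefficients ($\delta_{z^2},\delta_{z^3}$) that the relations $2u_1^2=0$, $u_1^6=0$ and the $\mathbb{E}_{12}$ conclusion require. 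Moreover, rather than mapping into $R^0$ of a projective space, the paper dualizes: since $E^0(\mathbb{RP}^{\infty})$ and $E^0$ of the relevant stunted spectrum $\mathbb{RP}^\infty_{n+1}$ are finitely generated free $E_0$-modules, the factorization becomes a commuting square of $E_0$-modules landing in $\pi_0(L_{K(2)}R)$ itself (Corollary \ref{PowOpExplicit}), and the explicit relations $z^{n+1}\equiv\cdots$ in $E_0[z]/(z^4-u_1z^2-2z)$ translate the vanishing into concrete divisibility statements such as $p_1=2s$, $p_2=u_1 s+2t$, $p_3=r+u_1 t$. Without this reformulation (or an equally effective handling of the odd-degree contributions you flag), your plan does not yet yield the stated relations.
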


Based on Theorem \ref{Investigation}, it seems as though the free $\mathbb{E}_4$-algebra with $4=0$ might not be $K(2)$-acyclic (though it must be $K(1)$-acyclic).  We are left with the following two questions:

\begin{qst} \label{SearchForCounterexamples}
Does there exists a non-trivial, $K(2)$-local, $\mathbb{E}_4$-algebra $R$ such that $p^2=0$ in $\pi_0(R)$?
\end{qst}

\begin{qst}
Let $f_p(k,h)$ denote the smallest $n$ such that the free $\mathbb{E}_n$-algebra with $p^k=0$ is $K(h)$-acyclic.  What is the asymptotic behavior of $f_p(k,h)$?
\end{qst}

It seems probable that the techniques of this paper, combined with those of \cite{HahnBousfield}, yield coarse lower bounds for $f_p(k,h)$--we leave this for future work.  It would be fascinating if Question \ref{SearchForCounterexamples} were tied to chromatic generalizations of the Cohen-Moore-Neisendorfer theorem, perhaps along the lines of those hinted at in \cite{Guozhen}.

\begin{rmk}
In the body of the paper to follow, all spectra are implicitly $p$-completed.
\end{rmk}

\textbf{Acknowledgments:} The author thanks Akhil Mathew, Jun Hou Fung, Eric Peterson, Allen Yuan, and especially his PhD advisor Mike Hopkins for helpful conversations.  This work was supported by an NSF Graduate Fellowship under Grant DGE-1144152.

\section{Universal torsion \texorpdfstring{$\mathbb{E}_{n}$}{En}-algebras} \label{UniversalConstruction}

Let $A$ be any $\mathbb{E}_\infty$-ring spectrum.  The ($\infty$)-category of $A$-modules is then symmetric monoidal, and so the subcategory consisting of the unit and its automorphisms is an infinite loop space denoted $BGL_1(A)$.  As defined in \cite{ThomDef}, the Thom spectrum of a map of spaces $$f:X \rightarrow BGL_1(A)$$ is the colimit of the composite functor $$f:X \rightarrow BGL_1(A) \subset A\text{-}\textbf{Modules}.$$  As was first noted by Lewis \cite[IX]{Lewis} and further developed in \cite{ThomRing}, the Thom spectrum $\text{Thom}(f)$ is naturally an $\mathbb{E}_n$-$A$-algebra whenever $f$ is an $n$-fold loop map.  The space $GL_1(A) \simeq \Omega BGL_1(A)$ may alternatively be described as a pullback
$$
\begin{tikzcd}
GL_1(A) \arrow{r} \arrow{d} & \Omega^{\infty} A \arrow{d} \\
\pi_0(A)^{\times} \arrow{r} & \pi_0(A).
\end{tikzcd}
$$
From this description it is clear that $\pi_1(BGL_1(A)) \cong \pi_0(A)^{\times},$ and $\pi_*(BGL_1(A)) \cong \pi_{*-1}(A)$ for $*>1$.

\begin{dfn}
Suppose we are given some $x \in \pi_0(A)$ that is one less than a unit.  Then, for each $n \ge 0$, there is an $n$-fold loop map 
$$\widetilde{1+x}:\Omega^n S^{n+1} \rightarrow BGL_1(A)$$
adjoint to the map $1+x:S^1 \rightarrow BGL_1(A)$.  We use the symbol $R^{A}_{n,x}$ to denote the resulting Thom $\mathbb{E}_n$-$A$-algebra.
\end{dfn}

\begin{ThomThm*}  For any $\mathbb{E}_n$-$A$-algebra $B$, the space of $\mathbb{E}_n$-$A$-algebra maps from $R^A_{n,k}$ to $B$ is equivalent to the space of homotopies between the two $A$-module maps
$$0:A \rightarrow B \textit{\hspace{0.5cm} and \hspace{0.5cm}} x:A \rightarrow B.$$
In particular, an $\mathbb{E}_n$-$A$-algebra map $R^A_{n,x} \rightarrow B$ exists if and only if $x=0$ in $\pi_0(B)$.  In the case $n=0$, we interpret an $\mathbb{E}_0$-$A$-algebra to mean an $A$-module $B$ equipped with a unit homomorphism $A \rightarrow B$.  One defines the map $x:A \rightarrow B$ as the precomposition of the unit with the multiplication by $x$ map on $A$. 
\end{ThomThm*}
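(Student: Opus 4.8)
As its title indicates, this is Theorem~4.10 of \cite{ThomRing}; rather than reprove it, I would deduce the stated form from the general universal property of Thom $\mathbb{E}_n$-algebras established there. The plan rests on three ingredients. First, the universal property of $\text{Thom}(f)$ for an $n$-fold loop map $f\colon X\to BGL_1(A)$: for an $\mathbb{E}_n$-$A$-algebra $B$, the space of $\mathbb{E}_n$-$A$-algebra maps $\text{Thom}(f)\to B$ is the space of trivializations of the composite $X\xrightarrow{f}BGL_1(A)\to BGL_1(B)$, i.e.\ the space of paths from that composite to the constant map at the unit, taken inside $\Map_{\mathbb{E}_n}(X,BGL_1(B))$. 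Second, the identification of $\Omega^nS^{n+1}=\Omega^n\Sigma^nS^1$ as the free grouplike $\mathbb{E}_n$-space on $S^1$, so that $\mathbb{E}_n$-maps out of it into a grouplike $\mathbb{E}_n$-space $Z$ are the same as pointed maps $S^1\to Z$; with $Z=BGL_1(B)$ this gives $\Map_{\mathbb{E}_n}(\Omega^nS^{n+1},BGL_1(B))\simeq\Omega BGL_1(B)\simeq GL_1(B)$. Third, the fact that $GL_1(B)$ is a union of path components of $\Omega^\infty B\simeq\Map_A(A,B)$.

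Granting these, I would unwind the composite in the first ingredient for $f=\widetilde{1+x}$. By naturality of the $\Sigma^n\dashv\Omega^n$ adjunction, the $\mathbb{E}_n$-map $\Omega^nS^{n+1}\xrightarrow{\widetilde{1+x}}BGL_1(A)\to BGL_1(B)$ corresponds under the second ingredient to the unit $1_B+x\cdot1_B\in\pi_0(B)^{\times}\subseteq GL_1(B)$ (the image of $1+x$ along $\pi_0(A)\to\pi_0(B)$), while the constant map corresponds to $1_B$. Hence $\Map_{\mathbb{E}_n\text{-}A\text{-}\mathrm{alg}}(R^A_{n,x},B)$ is the space of paths in $GL_1(B)$ from $1_B+x\cdot1_B$ to $1_B$. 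On the other hand, under $\Map_A(A,B)\simeq\Omega^\infty B$ the $A$-module map $0\colon A\to B$ is the basepoint and the $A$-module map $x\colon A\to B$ (the unit followed by multiplication by $x$) is the point $x\cdot1_B$, so the space of homotopies between $0$ and $x$ is the space of paths in $\Omega^\infty B$ from $0$ to $x\cdot1_B$. Translation by $1_B$ is an equivalence of $\Omega^\infty B$ carrying this onto the space of paths from $1_B$ to $1_B+x\cdot1_B$, and by the third ingredient that agrees with the path space just computed; either space is nonempty precisely when $x\cdot1_B=0$ in $\pi_0(B)$, giving the ``in particular'' clause. For $n=0$ I would argue directly: $R^A_{0,x}=\text{Thom}(1+x\colon S^1\to BGL_1(A))$ is the colimit over $S^1$ of the rank-one local system of $A$-modules with monodromy multiplication by $1+x$, hence (up to equivalence) the cofiber $A/x$ of multiplication by $x$ together with its tautological unit map $A\to A/x$; mapping the defining cofiber sequence into $B$ identifies $\mathbb{E}_0$-$A$-algebra maps $A/x\to B$, that is $A$-module maps under $A$, with paths in $\Omega^\infty B$ from $x\cdot1_B$ to $0$, matching the formula.

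The only genuinely substantial point is the first ingredient, the universal property of Thom $\mathbb{E}_n$-algebras; I would simply cite it, its proof in \cite{ThomRing} --- presenting the Thom spectrum functor as a symmetric monoidal left adjoint, so that it preserves the relevant free--forgetful adjunctions and pushouts --- being the real content here. Everything else is formal bookkeeping with the $\Sigma^n\dashv\Omega^n$ adjunction and with path spaces in infinite loop spaces; the one spot requiring mild care is the connectedness of $S^1$, which is exactly what makes $\Omega^nS^{n+1}$ the free grouplike $\mathbb{E}_n$-space on it and is why the cases $n\ge1$ and $n=0$ are treated slightly differently.
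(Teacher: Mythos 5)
Your sketch is correct, but note that the paper itself offers no proof of this statement: it is quoted verbatim as Theorem 4.10 of \cite{ThomRing}, so the only comparison available is with that cited source, and your reconstruction is essentially the derivation given there. The chain you propose is the standard one and each link checks out: the orientation-theoretic universal property of Thom $\mathbb{E}_n$-$A$-algebras identifies $\Map_{\mathbb{E}_n\text{-}A\text{-alg}}(R^A_{n,x},B)$ with nullhomotopies of $\Omega^nS^{n+1}\to BGL_1(A)\to BGL_1(B)$; the freeness of $\Omega^n\Sigma^nS^1$ as a grouplike $\mathbb{E}_n$-space on the (connected, as you rightly flag) pointed space $S^1$ converts this into the space of paths in $GL_1(B)$ from $1+x$ to $1$; and since $GL_1(B)\subset\Omega^\infty B$ is a union of path components, translation by $1$ in the additive infinite loop structure matches this with homotopies between the $A$-module maps $x$ and $0$, nonempty exactly when $x=0$ in $\pi_0(B)$. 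Your $n=0$ case is also fine: the Thom spectrum of $1+x\colon S^1\to BGL_1(A)$ is the cofiber of multiplication by $x$ with its tautological unit, and mapping the cofiber sequence into $B$ gives the same answer. The one genuinely substantial input, the universal property of Thom $\mathbb{E}_n$-algebras, is correctly attributed to \cite{ThomRing} rather than reproved, which matches the paper's own level of citation.
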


As a result of the above theorem, we will sometimes call $R^A_{n,x}$ the universal or free $\mathbb{E}_n$-$A$-algebra in which $x=0$.  We will be particularly interested in the case that $A=\mathbb{S}$ is the ($p$-complete) sphere spectrum.  Notice that, for each integer $k \ge 1$, there is a unit $1+p^k \in \pi_0(\mathbb{S})$.

\begin{dfn}
We use the shorthand $R_{n,k}$ to denote $R^{\mathbb{S}}_{n,p^k}$, the free $\mathbb{E}_n$-ring spectrum in which $p^k=0$.
\end{dfn}

\begin{rmk} \label{plocalRmk}
Later, when $p$ is an odd prime, we will make use of the decomposition of the group of $p$-adic units $\mathbb{Z}^{\times}_p$ as a product $\mu_{p-1} \times (1+p\mathbb{Z}_p)^{\times}$.  There is a subgroup $H$ of units that are $1$ more than a multiple of $p$ (as opposed to some other root of unity plus a multiple of $p$).  By choosing a full symmetric monoidal subcategory of $GL_1(\mathbb{S})$, one can make an infinite loop map $BH \rightarrow BGL_1(\mathbb{S})$.  There is a factorization
$$\widetilde{1+p^k}:\Omega^n S^{n+1} \rightarrow BH \rightarrow BGL_1(\mathbb{S}).$$
Since $BH$ is a $p$-complete space, $\widetilde{1+p^k}$ also factors through the $p$-completion of $\Omega^n S^{n+1}$.
\end{rmk}

\begin{thm}
The following are equivalent:
\begin{enumerate}
\item Every $\mathbb{E}_n$-ring spectrum satisfies the $k$th May nilpotence criterion.
\item For all integers $h \ge 1$, $L_{K(h)}(R_{n,k})$ is the trivial $\mathbb{E}_n$-algebra.
\end{enumerate}
\end{thm}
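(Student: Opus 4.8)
The plan is to prove both implications by contraposition, using only two ingredients: the universal property of $R_{n,k}$ recalled above (Theorem 4.10 of \cite{ThomRing}), and the Devinatz--Hopkins--Smith nilpotence theorem \cite{NilpotenceI} in the form that an element $\alpha$ of the homotopy ring of any ring spectrum $R$ is nilpotent if and only if its image in $K(h)_*(R)$ is nilpotent for every $h$ with $0 \le h \le \infty$, where by convention $K(0) = H\mathbb{Q}$ and $K(\infty) = H\mathbb{F}_p$. Note that $L_{K(h)}(R_{n,k})$ being the trivial $\mathbb{E}_n$-algebra is the same as $K(h)\smsh R_{n,k}\simeq 0$.

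For $(2) \Rightarrow (1)$, suppose some $\mathbb{E}_n$-ring $R$ fails the $k$th May nilpotence criterion: there is a non-nilpotent $x \in \pi_*(R)$ with $p^k x = 0$ whose Hurewicz image in $H_*(R;\mathbb{F}_p)$ is nilpotent. By the nilpotence theorem the image of $x$ in $K(h)_*(R)$ is non-nilpotent for some $0 \le h \le \infty$; the case $h=\infty$ is excluded by the hypothesis on the Hurewicz image, and $h=0$ is excluded because $x$ is $p$-power torsion while $H\mathbb{Q}_*(R)$ is a $\mathbb{Q}$-vector space, so $1 \le h < \infty$. Next I would pass to the localization $R[x^{-1}]$, which carries an $\mathbb{E}_n$-ring structure (localization of an $\mathbb{E}_n$-ring at a homotopy element) and in which $p^k = (p^kx)\cdot x^{-1} = 0 \in \pi_0$; by the universal property there is an $\mathbb{E}_n$-ring map $R_{n,k} \to R[x^{-1}]$. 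Applying $K(h)_*(-)$ and using the Künneth isomorphism for Morava $K$-theory turns this into a unital map of $K(h)_*$-algebras $K(h)_*(R_{n,k}) \to K(h)_*(R[x^{-1}])$. The target is the localization of $K(h)_*(R)$ at the image of $x$, which is non-nilpotent, and is therefore nonzero; since the map is unital, the source is nonzero as well, so $K(h)\smsh R_{n,k}\not\simeq 0$, i.e.\ $L_{K(h)}(R_{n,k})$ is not the trivial $\mathbb{E}_n$-algebra.

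For $(1) \Rightarrow (2)$, suppose $K(h)\smsh R_{n,k}\not\simeq 0$ for some $h \ge 1$. The idea is to modify $R_{n,k}$ so as to annihilate its mod-$p$ homology while keeping it $K(h)$-locally nonzero. Let $E$ be a Morava $E$-theory of height $h$: an $\mathbb{E}_\infty$-ring with $K(h)\smsh E\not\simeq 0$ and with vanishing mod-$p$ homology, $H\mathbb{F}_p\smsh E\simeq 0$ (a standard consequence of $K(h)_*H\mathbb{Z}=0$). Then $R := E\smsh R_{n,k}$ is an $\mathbb{E}_n$-$E$-algebra, hence an $\mathbb{E}_n$-ring, with $p^k=0$ in $\pi_0(R)$. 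By Künneth, $K(h)_*(R)\cong K(h)_*(E)\otimes_{K(h)_*}K(h)_*(R_{n,k})\ne 0$, so $R\not\simeq 0$ and $1\in\pi_0(R)$ is not nilpotent; on the other hand $H\mathbb{F}_p\smsh R\simeq (H\mathbb{F}_p\smsh E)\smsh R_{n,k}\simeq 0$, so $1$ has nilpotent (indeed zero) Hurewicz image in $H_*(R;\mathbb{F}_p)$. Since $p^k\cdot 1=0$, the element $1$ shows that $R$ fails the $k$th May nilpotence criterion, contradicting (1). (One could equally take $R = L_{K(h)}(R_{n,k})$ here, using that $K(h)$-local spectra have trivial mod-$p$ homology.)

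I expect the only genuinely non-formal step to be the invocation of the nilpotence theorem in its $K(h)$-local form: this is the sole place the deep work of \cite{NilpotenceI} enters, and it is exactly what allows non-nilpotence to be detected at a single finite height $h$. The remaining points — that $\mathbb{E}_n$-structures survive localization at a homotopy element and base change along an $\mathbb{E}_\infty$-ring, that Morava $E$-theory has trivial mod-$p$ homology, and that Morava $K$-theory satisfies a Künneth isomorphism — are standard and require only routine verification.
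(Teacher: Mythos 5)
Your proposal is correct and takes essentially the same approach as the paper: one direction combines the universal property of $R_{n,k}$ with inverting $x$ and the Devinatz--Hopkins--Smith nilpotence theorem, and the other applies the May criterion to the unit element of an algebra built from $R_{n,k}$ that is $K(h)$-homologically nontrivial but has vanishing mod-$p$ homology. The only cosmetic difference is that where you smash with Morava $E$-theory to kill $H\mathbb{F}_p$-homology, the paper works directly with $L_{K(h)}R_{n,k}$, using that $L_h$ is smashing and $L_h H\mathbb{Z}\simeq H\mathbb{Q}$ -- which is precisely your parenthetical alternative.
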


\begin{proof}
First, let us show that $(1) \Rightarrow (2)$.  For each integer $h>0$, let $A_h$ denote the $\mathbb{E}_n$-algebra $L_{K(h)} R_{n,k}$.   Since $A_h$ is $K(h)$-local, it is $L_h$-local.  Since $L_h$ is smashing, we deduce that $$H\mathbb{Z} \smsh A_h \simeq \left(L_{h} H\mathbb{Z}\right) \smsh A_h \simeq H\mathbb{Q} \smsh A_h.$$
As $p^k=0$ in $\pi_0(A_h)$, $H\mathbb{Q} \smsh A_h \simeq 0$.  Using the May nilpotence criterion, we conclude that $1$ is nilpotent in $A_h$, so $A_h \simeq 0$.

Next, we show that $(2) \Rightarrow (1)$.  Suppose that $A$ is an arbitrary $\mathbb{E}_n$-algebra and $x \in \pi_*(A)$ satisfies $p^k x = 0$.  We must show that, if $x$ has nilpotent Hurewicz image in $H_*(A)$, then it is nilpotent in $\pi_*(A)$.  By the work of Devinatz, Hopkins, and Smith \cite{NilpotenceI, NilpotenceII}, it suffices to show that $x$ has nilpotent image in $\pi_*(L_{K(h)} A)$ for every $h \ge 1$.  For each $h$, this is equivalent to showing that the $\mathbb{E}_n$-algebra $L_{K(h)}A[x^{-1}]$ is trivial.  The universal property of $R_{n,k}$ guarantees that there exists a map $R_{n,k} \rightarrow L_{K(h)}A[x^{-1}]$, which must in turn factor through $L_{K(h)}R_{n,k} \simeq 0$.  Only the trivial $\mathbb{E}_n$-algebra receives a map from the trivial $\mathbb{E}_n$-algebra.
\end{proof}

\section{Counterexamples to May nilpotence} \label{CounterexampleSection}

Assume $p>2$ is odd.  For any $n \ge 1$, we will prove that the spectrum $R_{2n-1,n}$ of Section \ref{UniversalConstruction} is not $K(h)$-acyclic for any height $h \ge 1$.  In particular, this shows that the May nilpotence criterion fails for $p^n$-torsion classes in $\mathbb{E}_{2n-1}$-algebras.  The restriction to odd primes allows us to utilize the deep work of Cohen, Moore, and Neisendorfer \cite{CMNI, CMNII, Neisendorfer}:

\begin{thm}[Cohen-Moore-Neisendorfer] \label{CMNTheorem}
Suppose $p$ is an odd prime and $n \ge 1$.  Then, after $p$-localization, the $p$th power map $$p:\Omega^{2} S^{2n+1} \rightarrow \Omega^{2} S^{2n+1}$$ factors through the double suspension $E^{2}:S^{2n-1} \rightarrow \Omega^{2} S^{2n+1}$.
\end{thm}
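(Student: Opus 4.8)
The statement is a deep theorem, and the full argument of Cohen, Moore, and Neisendorfer takes up the whole of \cite{CMNI,CMNII,Neisendorfer}; what follows is only a sketch of the shape of such a proof. The plan is to reformulate everything in terms of the homotopy fiber $W_n$ of the double suspension $E^2 \colon S^{2n-1} \to \Omega^2 S^{2n+1}$. By Gray's work delooping the double suspension, one has, $p$-locally, a fibration sequence
$$W_n \longrightarrow S^{2n-1} \xrightarrow{\ E^2\ } \Omega^2 S^{2n+1} \xrightarrow{\ q\ } BW_n$$
in which $E^2$ is the inclusion of the fiber of $q$. A map into the total space of a fibration factors through the inclusion of the fiber precisely when its composite with the projection to the base is null, so the theorem is equivalent to the single assertion that $\Omega^2 S^{2n+1} \xrightarrow{p} \Omega^2 S^{2n+1} \xrightarrow{q} BW_n$ is null-homotopic; in practice one proves the slightly stronger statement that the $p$th power map on $W_n$ itself is null-homotopic (``Gray's conjecture''). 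As $W_n$ is highly connected --- its first nonzero mod $p$ homology sits in dimension $2np - 3$, far above the bottom cell of $\Omega^2 S^{2n+1}$ --- the real task is to control $W_n$ precisely enough to see that it carries exactly one power of $p$.

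To get at $W_n$, first compute the mod $p$ homology of $\Omega^2 S^{2n+1}$ as an algebra over the Araki--Kudo--Dyer--Lashof operations and the Browder bracket; it is the free such object on a single generator $x$ in degree $2n - 1$, and --- a first sign that odd primes are special --- when $p$ is odd the Browder self-bracket $[x,x]$ vanishes for sign reasons, so the first new generator is a Dyer--Lashof class, and one reads off $H_*(W_n;\mathbb{F}_p)$ from the Serre spectral sequence. The geometric core is then the Cohen--Moore--Neisendorfer product decomposition of the loop space of a mod $p^r$ Moore space $P^m(p^r) = S^{m-1} \cup_{p^r} e^m$: for $p$ odd these are co-$H$-spaces with homotopy-associative, homotopy-commutative comultiplications, and one can manufacture enough self-maps and iterated Samelson products on their loop spaces to realize --- by honest maps of spaces --- the algebraic Poincar\'e--Birkhoff--Witt decomposition of the associated tensor and free graded Lie algebras. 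The output is that $\Omega P^m(p^r)$ splits, up to homotopy, as a weak infinite product whose basic factors are the homotopy fibers $S^{2k-1}\{p^r\}$ of the degree-$p^r$ self-maps of odd spheres, together with loop spaces of wedges of Moore spaces.

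Finally, feed the sphere back in: the cofibration $S^{2n} \xrightarrow{p^r} S^{2n} \to P^{2n+1}(p^r)$, the pinch map $P^{2n+1}(p^r) \to S^{2n+1}$, and the maps of loop spaces they induce let one trace the $S^{2n-1}\{p^r\}$-summand of the decomposition into $\Omega^2 S^{2n+1}$ and see how it interacts with both $E^2$ and multiplication by $p^r$; chasing this diagram yields the required null-homotopy --- equivalently, a map $\tau \colon \Omega^2 S^{2n+1} \to S^{2n-1}$ whose composite with $E^2$ is the $p$th power map --- and the case $r = 1$ is the theorem. (As a reality check, the factorization $p = E^2\circ\tau$ formally implies the odd-primary exponent bound $p^n\cdot\mathrm{tors}\,\pi_*(S^{2n+1}) = 0$, which is the classical motivation.) I expect the main obstacle to be exactly the Moore-space decomposition: realizing the algebraic splitting of these Hopf algebras by genuine maps requires the full apparatus of homotopy theory with mod $p^r$ coefficients --- in particular the fact that, for $p$ odd, $P^m(p^r)$ behaves enough like a free $\mathbb{Z}/p^r$-module object for the Samelson-product constructions to close up. That apparatus fails at $p = 2$, where correspondingly no such factorization (nor any such exponent theorem) exists, which is why the hypothesis $p > 2$ cannot be removed.
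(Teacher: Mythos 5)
Theorem \ref{CMNTheorem} is not proved in this paper at all: it is imported as a black box from \cite{CMNI,CMNII,Neisendorfer}, so there is no internal argument to compare yours against, and a citation-level sketch is the appropriate response. Your second and third paragraphs do track the genuine Cohen--Moore--Neisendorfer strategy: the computation of $H_*(\Omega^2 S^{2n+1};\mathbb{F}_p)$ over the Dyer--Lashof/Browder structure (and the vanishing of $[x,x]$ at odd primes), the product decomposition of $\Omega P^{m}(p^r)$ into factors $S^{2k-1}\{p^r\}$ and loops on wedges of Moore spaces via Samelson products and a PBW-type splitting, and the diagram chase through the pinch map and the fibrations attached to $S^{2n}\xrightarrow{p^r}S^{2n}\to P^{2n+1}(p^r)$, which is exactly how the retraction $\pi\colon \Omega^2 S^{2n+1}\to S^{2n-1}$ with $E^2\circ\pi\simeq p$ is manufactured. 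The one substantive misstep is your opening reduction: the fibration $S^{2n-1}\xrightarrow{E^2}\Omega^2 S^{2n+1}\to BW_n$ is Gray's delooping of the double suspension, which postdates Cohen--Moore--Neisendorfer by nearly a decade, and the assertion that ``in practice one proves'' the nullity of the $p$th power map on $W_n$ (Gray's conjecture) gets the logic backwards --- that statement is a later and harder development whose known proofs (Anick, Gray--Theriault, Theriault) rest on the Cohen--Moore--Neisendorfer decompositions and exponent theorem as input, so it cannot serve as the engine of a proof of Theorem \ref{CMNTheorem} without circularity. The original argument needs no classifying space $BW_n$: the retraction is built directly from the Moore-space splittings, as the rest of your sketch describes. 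With that framing corrected, your outline is a fair summary of the cited proof; as you say, the full details occupy the three references and are well beyond what either you or the paper should reproduce.
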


\begin{cor}
After $p$-localization, the $(p^n)$th power map
$$p^{n}:\Omega^{2n} S^{2n+1} \rightarrow \Omega^{2n} S^{2n+1}$$
factors through $E^{2n}:S^1 \rightarrow S^{2n+1}$. 
\end{cor}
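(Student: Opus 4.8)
The plan is to iterate the Cohen--Moore--Neisendorfer theorem (Theorem~\ref{CMNTheorem}) $n$ times, splicing the resulting factorizations together by exploiting the naturality of $p$th power maps.

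First I would isolate two elementary inputs. (i) On any grouplike, homotopy-associative $H$-space the $p$th power map $[p]$ is well defined up to homotopy, and for an $H$-map $f\colon X\to Y$ one has $[p]_Y\circ f\simeq f\circ[p]_X$; we will only apply this to maps of the form $\Omega^{j}E^{2}$ with $j\ge 1$, which are loop maps and hence $H$-maps. (ii) For each $m\ge 1$, the composite of double suspensions
$$S^{1}\xrightarrow{E^{2}}\Omega^{2}S^{3}\xrightarrow{\Omega^{2}E^{2}}\Omega^{4}S^{5}\longrightarrow\cdots\longrightarrow\Omega^{2m}S^{2m+1}$$
agrees with the adjoint $S^{1}\to\Omega^{2m}S^{2m+1}$ of the $2m$-fold suspension $E^{2m}\colon S^{1}\to S^{2m+1}$; this is just the compatibility of the units of the $(\Sigma^{2a},\Omega^{2a})$-adjunctions under composition.

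Now I argue by induction on $n$. The case $n=1$ is Theorem~\ref{CMNTheorem} applied to $S^{3}$. For the inductive step, apply Theorem~\ref{CMNTheorem} to $S^{2n+1}$ and then loop $2n-2$ times to obtain a factorization
$$p\colon\ \Omega^{2n}S^{2n+1}\xrightarrow{g}\Omega^{2n-2}S^{2n-1}\xrightarrow{\Omega^{2n-2}E^{2}}\Omega^{2n}S^{2n+1}.$$
Write $p^{n}=p^{n-1}\circ p$; using naturality of power maps along the loop map $\Omega^{2n-2}E^{2}$, slide $p^{n-1}$ across it, and then apply the inductive hypothesis to the odd sphere $S^{2n-1}=S^{2(n-1)+1}$ to factor $p^{n-1}$ on $\Omega^{2n-2}S^{2n-1}\cong\Omega^{2(n-1)}S^{2(n-1)+1}$ through the adjoint of $E^{2(n-1)}\colon S^{1}\to S^{2n-1}$. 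Composing $\Omega^{2n-2}E^{2}$ with that adjoint and invoking input (ii) identifies the resulting map $S^1 \to \Omega^{2n}S^{2n+1}$ with the adjoint of $E^{2n}\colon S^{1}\to S^{2n+1}$, which is the claim.

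I expect the only genuinely delicate point to be the naturality bookkeeping in input (i): one must take care that the $p$th power maps on all the intermediate loop spaces are coherent enough for the sliding argument, and that the structure maps used are honest loop maps rather than merely $H$-maps of the bottom spheres (indeed $E^{2}\colon S^{2m-1}\to\Omega^{2}S^{2m+1}$ is not an $H$-map for $m\ge 2$, but it is only ever used at the very bottom of the induction, where Theorem~\ref{CMNTheorem} supplies the factorization directly). Everything takes place after $p$-localization exactly as in Theorem~\ref{CMNTheorem}, so nothing further is needed on that front.
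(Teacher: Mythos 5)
Your argument is correct and is exactly the intended one: the paper states this corollary without proof precisely because it follows by iterating Theorem~\ref{CMNTheorem}, looping the factorization, sliding the power maps across the (looped, hence $H$-) double suspensions, and using compatibility of adjunction units to identify the resulting map $S^1 \to \Omega^{2n}S^{2n+1}$ with the adjoint of $E^{2n}$. Your care about where $H$-map naturality is actually invoked (only on $\Omega^{2n-2}E^2$, never on the bottom double suspension itself) is exactly the right bookkeeping.
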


Using Theorem \ref{CMNTheorem} and the following lemma, we will give a quick proof that the free $\mathbb{E}_{2n-2}$-algebra with $p^n=0$ is not $K(h)$-acyclic.  As a byproduct of the proof we obtain a curious sort of power operation on Moore spectra.  Finally, we end the section with a slightly more complicated argument that handles $\mathbb{E}_{2n-1}$-algebras.

\begin{lem} \label{NumberLemma}
For any integers $a,n>0$, the map $\widetilde{1+p^{n}}:\Omega^{a}S^{a+1} \rightarrow BGL_1(\mathbb{S})$ factors as a composition
$$
\begin{tikzcd}
\Omega^{a}S^{a+1} \arrow{r}{p^{n-1}} &  \Omega^{a}S^{a+1} \arrow{r}{\widetilde{1+p\alpha}} & BGL_1(\mathbb{S}),
\end{tikzcd}
$$
where $\alpha \in \mathbb{Z}_p^{\times}$ is a $p$-adic unit such that $(1+p\alpha)^{p^{n-1}} = 1+p^{n}$.
\end{lem}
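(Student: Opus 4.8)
The plan is to split the statement into a number-theoretic existence claim and a homotopy-theoretic factorization, and to glue them using the universal property of $\Omega^a S^{a+1}$ that defines $\widetilde{1+x}$.

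\emph{First I would produce $\alpha$.} Since $p$ is odd, the group $1+p\mathbb{Z}_p$ of principal units is procyclic, topologically generated by $1+p$, and the resulting isomorphism $\mathbb{Z}_p \xrightarrow{\ \sim\ } 1+p\mathbb{Z}_p$, $t\mapsto(1+p)^t$, carries $p^j\mathbb{Z}_p$ onto $1+p^{j+1}\mathbb{Z}_p$ for every $j\ge 0$. Hence $1+p^n=(1+p)^t$ for a unique $t\in\mathbb{Z}_p$ with $v_p(t)=n-1$; setting $s:=t/p^{n-1}\in\mathbb{Z}_p^\times$ and defining $1+p\alpha:=(1+p)^s$ gives $(1+p\alpha)^{p^{n-1}}=(1+p)^t=1+p^n$, while $v_p(s)=0$ forces $(1+p)^s\in(1+p\mathbb{Z}_p)\setminus(1+p^2\mathbb{Z}_p)$, i.e.\ $\alpha\in\mathbb{Z}_p^\times$. (Equivalently, apply the $p$-adic logarithm, which is injective on $1+p\mathbb{Z}_p$; this is the one place where the oddness of $p$ is used.)

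\emph{Next I would establish the factorization $\widetilde{1+p\alpha}\circ p^{n-1}\simeq\widetilde{1+p^n}$.} By construction $\widetilde{1+p\alpha}\colon\Omega^a S^{a+1}\to BGL_1(\mathbb{S})$ is an $a$-fold loop map, hence (for $a\ge1$) a coherent $H$-map, so it intertwines the $p^{n-1}$st power maps on source and target:
$$\widetilde{1+p\alpha}\ \circ\ \bigl(p^{n-1}\text{ on }\Omega^a S^{a+1}\bigr)\ \simeq\ \bigl(p^{n-1}\text{ on }BGL_1(\mathbb{S})\bigr)\ \circ\ \widetilde{1+p\alpha}.$$
The point of passing to this form is that the right-hand side is visibly an $a$-fold loop map: the $H$-structure on $BGL_1(\mathbb{S})$ is that of an infinite loop space, and its $p^{n-1}$st power map is $\Omega^\infty$ of multiplication by $p^{n-1}$ on the associated connective spectrum, hence an infinite loop map, whose composite with $\widetilde{1+p\alpha}$ stays an $a$-fold loop map. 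Since restriction along the canonical $S^1\to\Omega^a S^{a+1}$ gives a bijection on homotopy classes from $a$-fold loop maps $\Omega^a S^{a+1}\to BGL_1(\mathbb{S})$ to pointed maps $S^1\to BGL_1(\mathbb{S})$ --- this is exactly the adjunction used to define $\widetilde{1+x}$ --- it suffices to compare restrictions. The restriction of $\bigl(p^{n-1}\text{ on }BGL_1(\mathbb{S})\bigr)\circ\widetilde{1+p\alpha}$ is $S^1\xrightarrow{\,1+p\alpha\,}BGL_1(\mathbb{S})\xrightarrow{\,p^{n-1}\,}BGL_1(\mathbb{S})$, which on $\pi_1(BGL_1(\mathbb{S}))\cong\pi_0(\mathbb{S})^\times$ carries the generator to $(1+p\alpha)^{p^{n-1}}=1+p^n$; as $[S^1,BGL_1(\mathbb{S})]_\ast=\pi_1(BGL_1(\mathbb{S}))$, this is homotopic to $1+p^n\colon S^1\to BGL_1(\mathbb{S})$, the restriction of $\widetilde{1+p^n}$. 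Combining the two displays yields the claim.

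\emph{The main obstacle I anticipate is purely organizational:} the $p^{n-1}$st power map on $\Omega^a S^{a+1}$ itself is not, in general, an $\mathbb{E}_a$-map (for $a=1$ the source need not be homotopy commutative), so one cannot feed $\widetilde{1+p\alpha}\circ p^{n-1}$ directly into the free-algebra universal property. Rerouting the power map onto $BGL_1(\mathbb{S})$, where it is an infinite loop map for structural reasons, is what makes the argument go through; the coherence required to move it across $\widetilde{1+p\alpha}$ is automatic since $a$-fold loop maps with $a\ge1$ are $A_\infty$-maps.
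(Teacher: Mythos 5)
Your proposal is correct, but it is organized differently from the paper's proof, which consists \emph{only} of the arithmetic claim: the paper shows more generally that for any $p$-adic unit $b$ and $k>0$ there is a unit $c$ with $1+bp^k=(1+pc)^{p^{k-1}}$, by induction on $k$, expanding $(1+xp^{k-1})^p=1+bp^k$, reducing (using $p>2$) to an equation $x-b=pf(x)$, and solving by Hensel's lemma; the homotopy-theoretic factorization is treated as immediate and not spelled out. You prove the same arithmetic input instead from the structure of the principal units --- $1+p\mathbb{Z}_p$ procyclic on $1+p$, with $t\mapsto(1+p)^t$ matching valuations, equivalently via the $p$-adic logarithm --- which is an equally valid route and isolates the use of $p$ odd just as the paper's binomial estimate does (and indeed the statement genuinely fails at $p=2$, e.g.\ $5$ is not a square in $\mathbb{Z}_2$). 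Where your write-up adds value is the second half: you justify carefully why $\widetilde{1+p\alpha}\circ p^{n-1}\simeq\widetilde{1+p^n}$, by moving the power map across the $H$-map $\widetilde{1+p\alpha}$ onto $BGL_1(\mathbb{S})$, where it is $\Omega^\infty$ of multiplication by $p^{n-1}$ and hence the composite remains an $a$-fold loop map, and then detecting the homotopy class of an $a$-fold loop map out of the free object $\Omega^a S^{a+1}=\Omega^a\Sigma^a S^1$ on its restriction to $S^1$, where both maps give $(1+p\alpha)^{p^{n-1}}=1+p^n$ in $\pi_1(BGL_1(\mathbb{S}))\cong\pi_0(\mathbb{S})^\times$. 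Your observation that the power map on the source is not itself an $\mathbb{E}_a$-map, so one must reroute it through the target before invoking the universal property, is exactly the point the paper leaves tacit; note only that this is needed just to produce a homotopy of maps of spaces over $BGL_1(\mathbb{S})$ (which is all the later Thom-spectrum arguments require), not a factorization through loop maps.
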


\begin{proof} 
It is more generally the case that, given any $p$-adic unit $b$ and integer $k>0$, there exists a $p$-adic unit $c$ such that $(1+bp^k)=(1+pc)^{p^{k-1}}$.  One can see this by induction on $k$, the case $k=1$ being trivial.  For $k>1$, it suffices by induction to find a unit $x$ such that $(1+xp^{k-1})^p = 1+bp^k$.  We may rewrite this equation as
$$1+\binom{p}{1} xp^{k-1} + \binom{p}{2} x^2 p^{2(k-1)} + \cdots + \binom{p}{p} x^p p^{p(k-1)} = 1 + bp^k,$$
which (since $p>2$ and $k>1$) reduces to 
$$x-b = p f(x)$$
for some polynomial $f(x)$.  We can solve this equation by Hensel's lemma.
\end{proof}

\begin{thm} \label{MooreProjection}
The natural map of Moore spectra $\mathbb{S}/p^{n} \rightarrow \mathbb{S}/p$ factors through $R_{2n-2,n}$
\end{thm}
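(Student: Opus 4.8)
The plan is to realize the map $\mathbb{S}/p^n \to \mathbb{S}/p$ of Moore spectra as a Thom-spectrum map arising from the Cohen--Moore--Neisendorfer factorization. First I would recall that the Moore spectrum $\mathbb{S}/p^k$ is the cofiber of $p^k : \mathbb{S} \to \mathbb{S}$, and that the natural map $\mathbb{S}/p^n \to \mathbb{S}/p$ is the one induced on cofibers by the commuting square whose vertical maps are $p^{n-1} : \mathbb{S} \to \mathbb{S}$ and $\mathrm{id} : \mathbb{S} \to \mathbb{S}$. On the Thom-spectrum side, the key point is that $R^{\mathbb{S}}_{0,x}$ is precisely the cofiber of $x : \mathbb{S} \to \mathbb{S}$, so $R_{0,n} \simeq \mathbb{S}/p^n$ and $R_{0,1} \simeq \mathbb{S}/p$ (here I use Remark~\ref{plocalRmk} and Theorem~4.10 of \cite{ThomRing} in the case $n=0$, which identifies $R^A_{0,x}$ as the cofiber of multiplication by $x$). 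So the statement to prove is really that the map $\mathbb{S}/p^n \simeq R_{0,n} \to R_{0,1} \simeq \mathbb{S}/p$ lifts along the forgetful map $R_{2n-2,n} \to R_{0,n}$; wait---I actually want a factorization $R_{0,n} \to R_{2n-2,n} \to R_{0,1}$, so I need to produce \emph{both} arrows.

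The second arrow, $R_{2n-2,n} \to R_{0,1} \simeq \mathbb{S}/p$, is the easy one: by the universal property (Theorem~4.10 of \cite{ThomRing}), an $\mathbb{E}_{2n-2}$-algebra map out of $R_{2n-2,n}$ to any $B$ is the same data as a nullhomotopy of $p^n : \mathbb{S} \to B$ (suitably interpreted via $\mathbb{E}_0$-structure when $n=0$; here $B$ need only be an $\mathbb{E}_{2n-2}$-algebra, but actually I only need $B$ an $\mathbb{E}_0$-algebra receiving the requisite structure after forgetting), and since $p^n = p^{n-1}\cdot p = 0$ in $\pi_0(\mathbb{S}/p)$, such a nullhomotopy exists. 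The genuinely substantive arrow is the first one, $R_{0,n} = \mathbb{S}/p^n \to R_{2n-2,n}$, which is where the Cohen--Moore--Neisendorfer input enters. The idea is to use Lemma~\ref{NumberLemma} with $a = 2n-2$: it rewrites $\widetilde{1+p^n} : \Omega^{2n-2} S^{2n-1} \to BGL_1(\mathbb{S})$ as $\widetilde{1+p\alpha} \circ p^{n-1}$, and then the Corollary to Theorem~\ref{CMNTheorem} tells us that after $p$-localization the $(p^{n-1})$th power map on $\Omega^{2n-2}S^{2n-1}$ factors through $E^{2n-2} : S^1 \to S^{2n-1}$. Composing, $\widetilde{1+p^n}$ on $\Omega^{2n-2}S^{2n-1}$ factors as $S^1 \xrightarrow{1+p\alpha} BGL_1(\mathbb{S})$ composed through $\Omega^{2n-2}S^{2n-1}$, and taking Thom spectra this factorization of maps of $\mathbb{E}_{2n-2}$-spaces gives a factorization of the corresponding Thom-$\mathbb{E}_{2n-2}$-algebras. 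The Thom spectrum of the map $S^1 \xrightarrow{1+p\alpha} BGL_1(\mathbb{S})$ is $R^{\mathbb{S}}_{0, p\alpha}$, which, since $p\alpha$ and $p$ differ by a unit, is equivalent to $\mathbb{S}/p$---no wait, I want the source to be $\mathbb{S}/p^n$; let me reconsider the direction of the factorization so that $R_{0,n}$ maps \emph{into} $R_{2n-2,n}$ via the $E^{2n-2}$ together with $p^{n-1}$ decomposition, using that the $\mathbb{E}_0$-Thom spectrum of $p^{n-1}$-composed-with-$(1+p\alpha)$-composed-with-the-inclusion recovers $\mathbb{S}/p^n$ on the nose.

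The main obstacle I anticipate is bookkeeping the directions and the $\mathbb{E}_k$-structures correctly: the CMN factorization is through a \emph{lower} connectivity sphere ($S^1$ rather than $\Omega^{2n-2}S^{2n-1}$), and one must be careful that this produces a map of spaces compatible with $(2n-2)$-fold loop structures so that Lewis's theorem (as in \cite{ThomRing}) upgrades it to a map of $\mathbb{E}_{2n-2}$-algebras, and that the resulting composite of Thom spectra is exactly the claimed natural map $\mathbb{S}/p^n \to \mathbb{S}/p$ rather than some unit multiple or twisted version of it. Verifying that the Thom spectrum of $\widetilde{1+p\alpha} : S^1 \to BGL_1(\mathbb{S})$ is the mod-$p$ Moore spectrum---with its canonical map from the mod-$p^n$ Moore spectrum---requires unwinding the $\pi_1(BGL_1(\mathbb{S})) \cong \pi_0(\mathbb{S})^\times$ identification and checking that the induced self-map of $\mathbb{S}$ before passing to cofibers is multiplication by $p$ up to a unit, with the unit absorbed by an automorphism of $\mathbb{S}/p$. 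I expect that once the square

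\[
\begin{tikzcd}
\mathbb{S} \arrow{r}{p^{n-1}} \arrow{d}{p} & \mathbb{S} \arrow{d}{p\alpha} \\
\mathbb{S} \arrow{r}{p^{n-1}} & \mathbb{S}
\end{tikzcd}
\]

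is matched against the geometry of the double suspension, the factorization $\mathbb{S}/p^n \to R_{2n-2,n} \to \mathbb{S}/p$ drops out, and the identification of the composite with the natural Moore-spectrum map follows by comparing the underlying maps of $\mathbb{S}$-modules.
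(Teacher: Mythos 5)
Your division of labor is backwards, and the step you call easy is exactly where the gap lies. The arrow $\mathbb{S}/p^n \to R_{2n-2,n}$ is the formal one: since $p^n=0$ in $\pi_0(R_{2n-2,n})$, the unit $\mathbb{S}\to R_{2n-2,n}$ extends over the cofiber of $p^n$; equivalently it is the map of Thom spectra induced by $E^{2n-2}\colon S^1\to\Omega^{2n-2}S^{2n-1}$, viewed as a map over $BGL_1(\mathbb{S})$. No Cohen--Moore--Neisendorfer input is needed there. The genuinely substantive arrow is $R_{2n-2,n}\to\mathbb{S}/p$, and your proposed construction of it fails: Theorem 4.10 of \cite{ThomRing} classifies $\mathbb{E}_{2n-2}$-algebra maps into $\mathbb{E}_{2n-2}$-algebras $B$, and $\mathbb{S}/p$ carries no such structure for $n\ge 2$ --- at an odd prime the mod $p$ Moore spectrum is not even $A_\infty$ (it admits an $A_{p-1}$- but no $A_p$-structure) --- so the universal property does not apply, and there is no ``$\mathbb{E}_0$-version'' of it: maps under $\mathbb{S}$ out of $R_{2n-2,n}$ are not classified by nullhomotopies of $p^n$. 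Indeed, if this arrow existed for such formal reasons, the theorem would need no CMN and would hold at $p=2$ as well; its existence is precisely the content of the theorem.

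The repair is the factorization you actually wrote down but then abandoned while second-guessing directions. By Lemma~\ref{NumberLemma}, the structure map of $R_{2n-2,n}$ is $\widetilde{1+p\alpha}\circ p^{n-1}$, and it factors through the $p$-complete space $BH$ of Remark~\ref{plocalRmk}; this is what licenses applying the ($p$-local) Cohen--Moore--Neisendorfer factorization of the power map $p^{n-1}$ through $E^{2n-2}\colon S^1\to\Omega^{2n-2}S^{2n-1}$. Hence $\widetilde{1+p^n}\colon\Omega^{2n-2}S^{2n-1}\to BGL_1(\mathbb{S})$ factors through the map $S^1\to BGL_1(\mathbb{S})$ classifying $1+p\alpha$, whose Thom spectrum is $\mathbb{S}/p\alpha\simeq\mathbb{S}/p$. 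Plain functoriality of the Thom spectrum for maps of spaces over $BGL_1(\mathbb{S})$ then yields $R_{2n-2,n}\to\mathbb{S}/p$; note that the CMN map $\Omega^{2n-2}S^{2n-1}\to S^1$ is not a loop map, so one neither obtains nor needs a multiplicative factorization (your worry about upgrading to $\mathbb{E}_{2n-2}$-algebra maps is a red herring --- the theorem only asserts a factorization of spectra). Composing with the formal map above and comparing units over $\mathbb{S}$ identifies the composite with the natural map $\mathbb{S}/p^n\to\mathbb{S}/p$. Your identification $R_{0,k}\simeq\mathbb{S}/p^k$ is fine, but as written the proposal never resolves the direction confusion and rests the load-bearing arrow on an inapplicable universal property.
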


\begin{proof}
Recall the sequence
$$
\begin{tikzcd}
\Omega^{2n-2}S^{2n-1} \arrow{r}{p^{n-1}} &  \Omega^{2n-2}S^{2n-1} \arrow{r}{\widetilde{1+p\alpha}} & BH \arrow{r} & BGL_1(\mathbb{S}),
\end{tikzcd}
$$
where $BH$ is the $p$-complete subspace of $BGL_1(\mathbb{S})$ constructed in Remark \ref{plocalRmk}.  Since $BH$ is in particular $p$-local, we obtain from the Cohen-Moore-Neisendorfer theorem an (implicitly $p$-localized) diagram
$$
\begin{tikzcd}
S^1 \arrow{r}{E^{2n-2}} & \Omega^{2n-2} S^{2n-1} \arrow{d} \arrow{r}{p^{n-1}} & \Omega^{2n-2}S^{2n-1} \arrow{r}{\widetilde{1+p\alpha}} & BH. \\
& S^1 \arrow[swap]{ur}{E^{2n-2}}
\end{tikzcd}
$$
The Thom spectrum of the composite 
$$
\begin{tikzcd}
S^1 \arrow{r}{E^{2n-2}} & \Omega^{2n-2}S^{2n-1} \arrow{r}{\widetilde{1+p\alpha}} & BH \arrow{r} & BGL_1(\mathbb{S}).
\end{tikzcd}
$$
is $\mathbb{S}/(p\alpha) \simeq \mathbb{S}/p$, whereas the Thom spectrum of
$$
\begin{tikzcd}
S^1 \arrow{r}{E^{2n-2}} & \Omega^{2n-2} S^{2n-1} \arrow{r}{\widetilde{1+p^n}} & BGL_1(\mathbb{S})
\end{tikzcd}
$$
is $\mathbb{S}/p^n$.
\end{proof}

For any height $h \ge 1$, the natural map $\mathbb{S} \rightarrow \mathbb{S}/p$ remains non-trivial after smashing with $K(h)$.  There is thus no unital map from a trivial algebra into $L_{K(h)} \mathbb{S}/p$, and it follows that:

\begin{cor}
For any height $h \ge 1$, the $K(h)$-localization of $R_{2n-2,n}$ is not trivial.  Thus, the $n$th May nilpotence criterion does not hold in the homotopy of $K(h)$-local, $\mathbb{E}_{2n-2}$-algebras.
\end{cor}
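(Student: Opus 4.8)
The plan is to deduce the corollary formally from Theorem~\ref{MooreProjection} together with the two elementary observations recorded just above its statement: that the canonical map $\mathbb{S}\to\mathbb{S}/p$ stays non-null after smashing with $K(h)$, and that a trivial algebra admits no unital map into a non-trivial target. First I would pin down the unit-compatibility of the factorization $\mathbb{S}/p^{n}\to R_{2n-2,n}\to\mathbb{S}/p$ furnished by Theorem~\ref{MooreProjection}. The first arrow is the map of Thom spectra induced by the based map $E^{2n-2}\colon S^{1}\to\Omega^{2n-2}S^{2n-1}$ lying over $BGL_1(\mathbb{S})$, so restricting along the basepoint $\ast\to S^{1}$ identifies the bottom-cell inclusion $\mathbb{S}\hookrightarrow\mathbb{S}/p^{n}$ with the unit of $R_{2n-2,n}$. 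Since $\mathbb{S}\hookrightarrow\mathbb{S}/p^{n}\to\mathbb{S}/p$ is itself the canonical map $\mathbb{S}\to\mathbb{S}/p$, the composite $\mathbb{S}\xrightarrow{\text{unit}}R_{2n-2,n}\to\mathbb{S}/p$ is that canonical map.

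Next I would $K(h)$-localize. If $L_{K(h)}R_{2n-2,n}$ were the trivial $\mathbb{E}_{2n-2}$-algebra, i.e. if $K(h)\smsh R_{2n-2,n}\simeq 0$, then applying $L_{K(h)}$ to $\mathbb{S}\to R_{2n-2,n}\to\mathbb{S}/p$ would produce a map factoring through $0$, hence null. But this composite is $L_{K(h)}$ of the canonical map $\mathbb{S}\to\mathbb{S}/p$, which is non-null: multiplication by $p$ vanishes on $K(h)$, so $K(h)\smsh\mathbb{S}/p\simeq K(h)\oplus\Sigma K(h)$ and $K(h)=K(h)\smsh\mathbb{S}\to K(h)\smsh\mathbb{S}/p$ is the split inclusion of a summand. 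This contradiction shows $L_{K(h)}R_{2n-2,n}\not\simeq 0$.

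For the final sentence of the corollary I would exhibit $L_{K(h)}R_{2n-2,n}$ itself as the desired counterexample: it is a non-trivial, $K(h)$-local $\mathbb{E}_{2n-2}$-algebra in which $p^{n}=0$ (the defining relation of $R_{2n-2,n}$ survives localization), and the class $1\in\pi_{0}$ has nilpotent Hurewicz image in $\mathbb{F}_p$-homology---indeed $H\mathbb{F}_p\smsh L_{K(h)}R_{2n-2,n}\simeq 0$, since $L_{K(h)}R_{2n-2,n}$ is $L_h$-local and therefore has $H\mathbb{Z}$-homology equal to its $H\mathbb{Q}$-homology, which vanishes as $p^{n}=0$---yet $1$ is not nilpotent in a non-zero ring. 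Alternatively one simply invokes the earlier theorem equating the triviality of $L_{K(h)}R_{2n-2,n}$ for all $h$ with the assertion that every $\mathbb{E}_{2n-2}$-ring spectrum satisfies the $n$th May nilpotence criterion. I anticipate no genuine obstacle: all the substance is contained in Theorem~\ref{MooreProjection}, which is already established, and the only point deserving a little care is the unit-compatibility used in the first step.
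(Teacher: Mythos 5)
Your argument is correct and is essentially the paper's own: the paper likewise deduces the corollary from Theorem \ref{MooreProjection} by observing that $\mathbb{S}\to\mathbb{S}/p$ stays non-trivial after smashing with $K(h)$, so a trivial $L_{K(h)}R_{2n-2,n}$ could not admit the unital map to $L_{K(h)}\mathbb{S}/p$ furnished by the Thom-spectrum factorization. Your extra verifications (unit-compatibility via restriction to the basepoint, the splitting $K(h)\smsh\mathbb{S}/p\simeq K(h)\oplus\Sigma K(h)$, and the vanishing of the mod-$p$ Hurewicz image of $1$) are exactly the details the paper leaves implicit.
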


\begin{rmk} \label{MoorePow}
The proof of Theorem \ref{MooreProjection} highlights a sort of power operation connecting the homotopy groups of Moore spectra.  If $k_1 > k_2 \gg n$, the map $\mathbb{S}/p^{k_1} \rightarrow \mathbb{S}/p^{k_2}$ will factor through the free $\mathbb{E}_n$-algebra with $p^{k_1}=0$.  For any $\mathbb{E}_n$-ring spectrum $E$, the power operations in $E \smsh R_{n,k_1}$ thus induce operations $E_0(\mathbb{S}/p^{k_1}) \rightarrow E_0(\mathbb{S}/p^{k_2})$.  This should be compared with \cite{LawsonDavis}, who prove that the entire tower $\{\mathbb{S}/p^k\}_{k \ge 1}$ is pro-$\mathbb{E}_\infty$.
\end{rmk}

The remainder of the section is devoted to the proof of the more comprehensive theorem:

\begin{thm}
For any height $h \ge 1$, the $K(h)$-localization of $R_{2n-1,n}$ is non-trivial.  Thus, the $n$th May nilpotence criterion does not hold in the homotopy of $K(h)$-local $\mathbb{E}_{2n-1}$-algebras.
\end{thm}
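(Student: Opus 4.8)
The plan is to reduce, exactly as in the $\mathbb{E}_{2n-2}$ case treated above, to the construction of a unital map of spectra $R_{2n-1,n}\to MU/p$. Granting such a map the theorem is formal: since $p$ acts by $0$ on $K(h)$ one has $K(h)\smsh MU/p\simeq(K(h)\smsh MU)\vee\Sigma(K(h)\smsh MU)\not\simeq 0$ for every $h\ge 1$, so $L_{K(h)}MU/p\not\simeq 0$; the composite $\mathbb{S}\to R_{2n-1,n}\to MU/p\to L_{K(h)}MU/p$ is then the canonical map $\mathbb{S}\to MU\to MU/p$ after localization, which is nonzero on $\pi_0$, and by naturality of Bousfield localization it factors as $\mathbb{S}\to L_{K(h)}R_{2n-1,n}\to L_{K(h)}MU/p$. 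Hence $L_{K(h)}R_{2n-1,n}\not\simeq 0$, and the $n$th May nilpotence criterion fails in the homotopy of $K(h)$-local $\mathbb{E}_{2n-1}$-algebras.

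To build the map I would realize both sides as Thom spectra and use functoriality of the Thom construction on pointed spaces over $BGL_1(\mathbb{S})$. The source is $R_{2n-1,n}=\mathrm{Thom}\big(\widetilde{1+p^n}\colon\Omega^{2n-1}S^{2n}\to BGL_1(\mathbb{S})\big)$ by definition, and by Remark \ref{plocalRmk} we may work $p$-completely and factor $\widetilde{1+p^n}$ through $BH$. For the target, combine the classical presentation $MU=\mathrm{Thom}\big(BU\to BGL_1(\mathbb{S})\big)$ of complex bordism as the Thom spectrum of the $J$-homomorphism with $\mathbb{S}/p=\mathrm{Thom}\big(S^1\xrightarrow{1+p}BGL_1(\mathbb{S})\big)$; feeding both through the infinite-loop multiplication on $BGL_1(\mathbb{S})$ gives $MU/p=\mathrm{Thom}\big(BU\times S^1\to BGL_1(\mathbb{S})\big)$. (The map $1+p$ on $S^1$ is not a loop map, compatibly with $\mathbb{S}/p$ not being $\mathbb{E}_1$; but this is harmless, as we only need a pointed map of base spaces and never assert that $MU/p$ is $\mathbb{E}_{2n-1}$.) It therefore suffices to construct a pointed map $g\colon\Omega^{2n-1}S^{2n}\to BU\times S^1$ lying over $BGL_1(\mathbb{S})$, i.e.\ with $\big(J\times(1+p)\big)\circ g\simeq\widetilde{1+p^n}$; then $\mathrm{Thom}(g)$ is the desired unital map.

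The construction of $g$ is the entire content, and it is here that the Cohen--Moore--Neisendorfer theorem is forced to appear: a naive ``degree'' map $\Omega^{2n-1}S^{2n}\to S^1$ will not work, since it would be an $\mathbb{E}_{2n-1}$-map whereas $1+p$ is not, so the composite into $BGL_1(\mathbb{S})$ would fail to be $\widetilde{1+p^n}$. Working $p$-locally, split $\Omega^{2n-1}S^{2n}\simeq_{(p)}\Omega^{2n-2}S^{2n-1}\times\Omega^{2n-1}S^{4n-1}$ by the odd-primary James splitting of $S^{2n}$ and define $g$ on each factor. On $\Omega^{2n-2}S^{2n-1}$ I reuse the proof of Theorem \ref{MooreProjection}: Lemma \ref{NumberLemma} together with Theorem \ref{CMNTheorem} yields a map $r\colon\Omega^{2n-2}S^{2n-1}\to S^1$ with $(1+p\alpha)\circ r\simeq\widetilde{1+p^n}$ on this factor, and composing $r$ with a degree-$e$ self-map of $S^1$ (where $(1+p)^e=1+p\alpha$ in $\mathbb{Z}_p^{\times}$) lands it in $\{*\}\times S^1$. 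On the second factor $\Omega^{2n-1}S^{4n-1}$, whose bottom cell is in degree $2n$, I would iterate Theorem \ref{CMNTheorem} $n-1$ times along the descending chain of odd spheres $S^{4n-1},S^{4n-3},\dots,S^{2n+1}$ to factor the relevant power map through $\Omega S^{2n+1}$, map $\Omega S^{2n+1}\to\Omega SU(n+1)\to\Omega SU=BU$ via a generator of $\pi_{2n+1}(SU(n+1))\cong\mathbb{Z}$, and then check that the resulting composite into $BGL_1(\mathbb{S})$ agrees with $\widetilde{1+p^n}$ on this factor; assembling the two pieces through the H-space structure of $BU\times S^1$ produces $g$.

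The main obstacle is precisely this second factor: one must arrange the iterated double-suspension factorizations to be compatible both with the $(2n-1)$-fold loop structure and with the James decomposition, and then carry out the bookkeeping in $\pi_*\big(BGL_1(\mathbb{S})\big)$ that identifies the composite with $\widetilde{1+p^n}$ — a computation weaving together the $p$-adic number theory of Lemma \ref{NumberLemma}, which governs behavior on bottom cells, with the geometry of Cohen--Moore--Neisendorfer, which controls the higher cells. Everything outside that step is formal Thom-spectrum manipulation plus the observations of the first paragraph.
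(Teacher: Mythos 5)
Your overall skeleton (split $\Omega^{2n-1}S^{2n}$ $p$-locally as $\Omega^{2n-2}S^{2n-1}\times\Omega^{2n-1}S^{4n-1}$, use Lemma \ref{NumberLemma} plus the Cohen--Moore--Neisendorfer theorem to push the $p^{n-1}$-th power map down to $S^1\times\Omega S^{2n+1}$, and then deduce non-acyclicity from a map whose composite with the unit is $\mathbb{S}\to MU/p$) matches the paper, and your first paragraph and your treatment of the $S^1$ factor are fine. But the step you yourself flag as ``the main obstacle'' is a genuine gap, not deferred bookkeeping. On the second factor you must produce a lift of the loop map $f_2\colon\Omega S^{2n+1}\to BGL_1(\mathbb{S})$ through the $J$-homomorphism $BU\to BGL_1(\mathbb{S})$. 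Since $f_2$ is the loop-map extension of its adjoint $\gamma\colon S^{2n}\to BGL_1(\mathbb{S})$, i.e.\ of a class $\gamma\in\pi_{2n-1}(\mathbb{S})$, such a lift (compatible with the maps to $BGL_1(\mathbb{S})$) exists essentially only if $\gamma$ lies in the image of $J$; your recipe via a generator of $\pi_{2n+1}(SU(n+1))$ can only ever hit im-$J$ classes. The class $\gamma$ here is a specific but uncomputed element (it arises from the Whitehead square factor of the Serre splitting fed through $\widetilde{1+p\alpha}$), and you give no argument that it is in im $J$ --- nor is one available without identifying $\gamma$, which is exactly the computation your plan cannot avoid. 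So the construction of $g$ on the $\Omega S^{2n+1}$ factor, which is the heart of the theorem, is unsupported and quite possibly false as stated.

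The paper sidesteps this entirely by never lifting to $BU$ at the space level. It stops the base-space factorization at $f=f_1\times f_2\colon S^1\times\Omega S^{2n+1}\to BGL_1(\mathbb{S})$, so that $\mathrm{Thom}(f)\simeq\mathbb{S}/p\smsh\mathrm{Thom}(f_2)$, and then invokes \cite[4.10]{ThomRing}: because $f_2$ is a loop map adjoint to $\gamma$, its Thom spectrum is the \emph{free $\mathbb{A}_\infty$-ring with $\gamma=0$}. Since $\gamma$ lives in odd degree and $\pi_*L_{K(h)}MU$ is concentrated in even degrees, the universal property hands you an $\mathbb{A}_\infty$-map $\mathrm{Thom}(f_2)\to L_{K(h)}MU$ with no need to identify $\gamma$ at all; smashing with $\mathbb{S}/p$ produces the map to ($K(h)$-local) $MU/p$ under $\mathbb{S}$, and your first paragraph finishes the argument. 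If you want to salvage your write-up, replace the $BU\times S^1$ lifting step by this universal-property argument. (Two small additional points: the exponent $e$ with $(1+p)^e=1+p\alpha$ is a $p$-adic integer, so the ``degree-$e$ self-map of $S^1$'' only exists after $p$-completion, as in Remark \ref{plocalRmk} --- and it is unnecessary, since $\mathrm{Thom}(1+p\alpha)\simeq\mathbb{S}/p\alpha\simeq\mathbb{S}/p$ already; and for the formal deduction one should note, as you implicitly do, that it is the non-vanishing of the \emph{unit} of $K(h)\smsh MU/p$, not merely $K(h)\smsh MU/p\not\simeq0$, that forces $L_{K(h)}R_{2n-1,n}\not\simeq0$.)
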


\begin{proof}
We use the well-known result of Serre that, after $p$-localization,
$$S^{2n-1} \times \Omega S^{4n-1} \simeq \Omega S^{2n}.$$
The equivalence is induced by the product of two maps \cite{Jelena}:
\begin{enumerate}
\item The suspension map $E:S^{2n-1} \rightarrow \Omega S^{2n},$ and
\item Loops of the Whitehead square of the identity $[\iota,\iota]:S^{4n-1} \rightarrow S^{2n}$.
\end{enumerate}
We conclude, from Lemma \ref{NumberLemma} and the Cohen-Moore-Neisendorfer Theorem, that the map
$$\widetilde{1+p^n}:\Omega^{2n-1} S^{2n} \longrightarrow BGL_1(\mathbb{S})$$
factors as a composite
$$
\begin{tikzcd} 
\Omega^{2n-2} S^{2n-1} \times \Omega^{2n-1} S^{4n-1} \arrow{d} \arrow{r}{p^{n-1}} & \Omega^{2n-2} S^{2n-1} \times \Omega^{2n-1} S^{4n-1} \arrow{r}{\simeq} & \Omega^{2n-1} S^{2n} \arrow{d}{\widetilde{1+p\alpha}} \\
S^1 \times \Omega S^{2n+1} \arrow[swap]{ur}{(E^{2n-2},\Omega E^{2n-2})} \arrow[swap]{rr}{f} && BGL_1(\mathbb{S}).
\end{tikzcd}
$$
It follows that there is a unital map from $R_{2n-1,n}$ to the Thom spectrum of the map 
$$f:S^1 \times \Omega S^{2n+1} \rightarrow BGL_1(\mathbb{S}).$$  Thus, to show that $R_{2n-1,n}$ is not $K(h)$-acyclic it suffices to show that $$K(h) \smsh \text{Thom}(f) \not\simeq 0.$$

The map $f$ is a product of two maps $$f \simeq \left( f_1:S^1 \rightarrow BGL_1(\mathbb{S}) \right) \times \left(f_2:\Omega S^{2n+1} \rightarrow BGL_1(\mathbb{S})\right),$$ which can be described as follows:
\begin{enumerate}
\item The map $f_1$ is $1+p\alpha:S^1 \rightarrow BGL_1(\mathbb{S})$, and so has associated Thom spectrum $\mathbb{S}/\alpha p \simeq \mathbb{S}/p$.
\item The map $f_2$ is the loop map adjoint to \textit{some} map of pointed spaces $$\gamma:S^{2n} \rightarrow BGL_1(\mathbb{S}).$$  We will not need to identify $\gamma \in \pi_{2n-1}(\mathbb{S})$.  By \cite[4.10]{ThomRing}, $\text{Thom}(f_2)$ is the universal $\mathbb{A}_\infty$-ring spectrum in which $\gamma=0$.
\end{enumerate}

Now, $\text{Thom}(f) \simeq \text{Thom}(f_1) \smsh \text{Thom}(f_2) \simeq \mathbb{S}/p \smsh \text{Thom}(f_2)$.  Since the Moore spectrum $\mathbb{S}/p$ is not $K(h)$-acyclic, we are reduced to showing that $\text{Thom}(f_2)$ is not $K(h)$-acyclic.  For this, it suffices to show that $\gamma=0$ in \textit{some} non-trivial $K(h)$-local, $\mathbb{A}_\infty$-ring spectrum.  We may use $L_{K(h)}MU$, since $\gamma \in \pi_{2n-1}(\mathbb{S})$ lives in odd degree.

\end{proof}

\section{The height one case at an odd prime} \label{height1oddSection}

Assume $p>2$ is an odd prime.  Recall that the free $\mathbb{E}_{2n}$-algebra with $p^n=0$, denoted $R_{2n,n}$, is the Thom spectrum of the $(2n)$-fold loop map $$\widetilde{1+p^n}:\Omega^{2n} S^{2n+1} \rightarrow BGL_1(\mathbb{S}).$$  In this section, we will prove:

\begin{thm} \label{HeightOneOddThm}
The $K(1)$-localization $L_{K(1)} R_{2n,n} \simeq 0$.  Thus, the May nilpotence criterion holds for $p^n$ torsion elements in $K(1)$-local $\mathbb{E}_{2n}$-ring spectra.
\end{thm}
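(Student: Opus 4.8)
The plan is to present $L_{K(1)}R_{2n,n}$ as a Thom spectrum over $\Omega^{2n}S^{2n+1}$, collapse the relevant colimit with the Bousfield--Kuhn functor $\Phi_1$, and then use Rezk's logarithm together with the Cohen--Moore--Neisendorfer theorem to see that it vanishes; throughout $p$ is odd, as is in force in Section~\ref{height1oddSection}. By the universal property of $R_{2n,n}$, its $K(1)$-localization is the initial $K(1)$-local $\mathbb{E}_{2n}$-algebra in which $p^n=0$, so it suffices to prove $1=0$ in $\pi_0 L_{K(1)}R_{2n,n}$, equivalently that $K(1)_*R_{2n,n}=0$. I would use $R_{2n,n}=\text{Thom}(\widetilde{1+p^n}\colon\Omega^{2n}S^{2n+1}\to BGL_1\mathbb{S})$ together with the fact (Remark~\ref{plocalRmk}) that $\widetilde{1+p^n}$ factors through the $p$-complete subspace $BH\subset BGL_1\mathbb{S}$ of units in $1+p\mathbb{Z}_p$; then $K(1)_*R_{2n,n}$ is the colimit over $\Omega^{2n}S^{2n+1}$ of the $K(1)$-linear local system classified by this map.

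\textbf{Linearization.} Since $BH$ is an infinite loop space with $\Phi_1(BH)\simeq\Sigma L_{K(1)}\mathrm{gl}_1^{(H)}\mathbb{S}$, and $\Phi_1(\Omega^{2n}S^{2n+1})\simeq\Sigma^{-2n}\Phi_1(S^{2n+1})$, applying $\Phi_1$ to the classifying map gives a map of $K(1)$-local spectra $\Sigma^{-2n}\Phi_1(S^{2n+1})\to\Sigma L_{K(1)}\mathrm{gl}_1^{(H)}\mathbb{S}$. Because $\mathbb{S}$ is $\mathbb{E}_\infty$, Rezk's logarithm supplies a natural map $\mathrm{gl}_1^{(H)}\mathbb{S}\to\mathbb{S}$ that becomes an equivalence after $K(1)$-localization (the complementary $\mu_{p-1}$-summand is $K(1)$-acyclic). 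Composing, the whole of the classifying data is encoded, after $\Phi_1$, by one map $\Sigma^{-2n}\Phi_1(S^{2n+1})\to\Sigma L_{K(1)}\mathbb{S}$, which is the only ``power operation'' the $\mathbb{E}_{2n}$-structure must provide and which governs whether the colimit above vanishes.

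\textbf{Feeding in Cohen--Moore--Neisendorfer.} By Lemma~\ref{NumberLemma}, $\widetilde{1+p^n}=\widetilde{1+p\alpha}\circ p^{n-1}$ with $\widetilde{1+p\alpha}$ the canonical $2n$-fold loop map; applying Theorem~\ref{CMNTheorem} a total of $n-1$ times factors the $p^{n-1}$-power map of $\Omega^{2n}S^{2n+1}$ through the iterated double suspension out of $\Omega^2 S^3$. This is legitimate because all the looped double suspensions that occur are $(\ge 2)$-fold loop maps, hence commute with $p$-power maps. Under this factorization $\widetilde{1+p^n}$ becomes $\widetilde{1+p\alpha}_{\Omega^2 S^3}\circ\psi$ for some $\psi\colon\Omega^{2n}S^{2n+1}\to\Omega^2 S^3$, and the Thom spectrum of the $2$-fold loop map $\widetilde{1+p\alpha}_{\Omega^2 S^3}$ is the free $\mathbb{E}_2$-ring with $p\alpha=0$, i.e.\ $R_{2,1}$; so the case $n=1$ is precisely the theorem of Hopkins and Mahowald. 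For general $n$, one feeds in the value of $\Phi_1$ on odd spheres --- this is exactly where the Cohen--Moore--Neisendorfer theorem is used, since it shows $\Phi_1(S^{2m+1})$ is a bounded-exponent $K(1)$-local spectrum, a suspension of $L_{K(1)}(\mathbb{S}/p^m)$ up to low-degree corrections --- together with the effect of $\psi$ (on $\Phi_1$, a reduction relating $L_{K(1)}(\mathbb{S}/p^n)$ and $L_{K(1)}(\mathbb{S}/p)$) and of the bottom-cell inclusion $E^{2n}$. A degree count in the $K(1)$-local stable category then shows the map of the previous paragraph is so constrained that the colimit computing $K(1)_*R_{2n,n}$ is zero. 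Tracking the $2n$ suspensions shows the argument collapses for $\mathbb{E}_{2n-1}$, in agreement with Section~\ref{CounterexampleSection}.

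\textbf{The main obstacle.} The soft reductions above are standard; the heart of the proof is the last step. One must identify precisely which shadow of the height-$1$ power operations is visible to an $\mathbb{E}_{2n}$-algebra and check that Rezk's logarithm computes it; verify that the Cohen--Moore--Neisendorfer factorizations can be chosen compatibly with the loop structures being used, so that they survive $\Phi_1$; and carry out the $K(1)$-local bookkeeping that makes $2n$, rather than $2n-1$, the exact threshold. It is this interplay of the exponent theorem for odd spheres, the logarithm, and the precise suspension degrees --- and not any single computation --- that I expect to require the most care.
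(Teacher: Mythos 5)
There is a genuine gap: your proposal sets up the right ingredients (the Bousfield--Kuhn functor, the factorization through $BH$, Rezk's logarithm) but never produces the actual mechanism that forces $L_{K(1)}R_{2n,n}\simeq 0$; the decisive step is waved at as ``a degree count \ldots shows the colimit is zero,'' which is exactly the part that needs an argument. The paper's proof does not compute the colimit $K(1)_*R_{2n,n}$ at all. Instead it uses the universal property of the Thom spectrum: orientation theory makes the composite $\Omega^{2n}S^{2n+1}\to BGL_1(\mathbb{S})\to BGL_1(R_{2n,n})$ null, and applying $\Phi_1$ to the once-looped composite turns this null homotopy into the statement that $\log(1+p^n)$ maps to zero under $\pi_0$ of $\Phi_1(\Omega^{2n+1}S^{2n+1})\to L_{K(1)}\mathbb{S}\to L_{K(1)}R_{2n,n}$. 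Two explicit computations then finish: Rezk's formula gives $\log(1+p^n)\equiv p^{n-1}\pmod{p^n}$, and the identification $\Phi_1(\Omega^{2n+1}S^{2n+1})\simeq L_{K(1)}\Sigma^{-1}\mathbb{S}/p^n$ (Mahowald--Thompson, cited via Davis, \emph{not} a consequence of Cohen--Moore--Neisendorfer) identifies the first map with the boundary of the $p^n$ cofiber sequence, so $p^{n-1}$ is divisible by $p^n$ in $\pi_0(L_{K(1)}R_{2n,n})$, hence zero; downward induction on $n$ gives $1=0$. None of these steps -- the orientation null homotopy, the congruence $\log(1+p^n)\equiv p^{n-1}$, the divisibility, or the induction -- appears in your write-up; you use the logarithm only as an abstract identification $L_{K(1)}\mathrm{gl}_1\mathbb{S}\simeq L_{K(1)}\mathbb{S}$, and your claim that $\Phi_1$ of the classifying map alone ``governs'' the vanishing of the Thom colimit is not justified.

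Your Cohen--Moore--Neisendorfer step is also pointing the wrong way for this theorem. Iterating CMN to factor $p^{n-1}$ on $\Omega^{2n}S^{2n+1}$ through $\Omega^2S^3$ is correct, but functoriality of Thom spectra then gives a unital map \emph{out of} $R_{2n,n}$ into the free $\mathbb{E}_2$-ring with $p\alpha=0$, i.e.\ into $H\mathbb{F}_p$, which is $K(1)$-acyclic; a map into an acyclic spectrum says nothing about acyclicity of the source. This is precisely the maneuver the paper uses in Section~\ref{CounterexampleSection}, where the target (a Moore spectrum, or $\mathbb{S}/p\smsh\mathrm{Thom}(f_2)$) is \emph{not} $K(h)$-acyclic and one concludes non-vanishing; for the vanishing statement of Theorem~\ref{HeightOneOddThm} it contributes nothing, and CMN is in fact not used in the paper's Section~\ref{height1oddSection} at all.
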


To begin the proof, recall that the theory of orientations (see, e.g., \cite[3.7]{ThomRing}) implies that the composite
$$
\begin{tikzcd} [column sep=huge]
\Omega^{2n} S^{2n+1} \arrow{r}{\widetilde{1+p^n}} & BGL_1(\mathbb{S}) \arrow{r}{\text{$BGL_1$(unit)}} & BGL_1(R_{2n,n})
\end{tikzcd}
$$
is null.  Taking loops once, and abusing notation by using $\widetilde{1+p^n}$ to also refer to the infinite loop map adjoint to $1+p^n \in \pi_0(GL_1(\mathbb{S}))$, we obtain the null composite
$$
\begin{tikzcd} [column sep=huge]
\Omega^{2n+1} S^{2n+1} \arrow{r} & \Omega^{\infty} \mathbb{S} \arrow{r}{\widetilde{1+p^n}} & GL_1(\mathbb{S}) \arrow{r}{\text{$GL_1$(unit)}} & GL_1(R_{2n,n}).
\end{tikzcd}
$$
To this sequence, we apply the $K(1)$-local Bousfield Kuhn functor $\Phi_1$ \cite{BousfieldFunctor}, obtaining
$$
\begin{tikzcd} [column sep = huge]
\Phi_1(\Omega^{2n+1} S^{2n+1}) \arrow{r} & L_{K(1)} \mathbb{S} \arrow{r}{\text{log}(1+p^n)} & L_{K(1)}\mathbb{S} \arrow{r}{L_{K(1)}\text{unit}} &L_{K(1)} R_{2n,n},
\end{tikzcd}
$$
where $\text{log}$ denotes the $K(1)$-local Rezk logarithm defined in \cite[\S 3]{RezkLog}.  A formula for the logarithm is given as Theorem $1.9$ in \cite{RezkLog}:

\begin{thm*}[Rezk]
The operation $$\text{log}:\pi_0(L_{K(1)} \mathbb{S})^{\times} \rightarrow \pi_0(L_{K(1)} \mathbb{S})$$ is given by the formula
$$\text{log}(x)=\sum_{k=1}^{\infty} (-1)^k \frac{p^{k-1}}{k} \left( \frac{\theta(x)}{x^p} \right)^k,$$
where 
$$\theta:\pi_0(L_{K(1)} \mathbb{S}) \rightarrow \pi_0(L_{K(1)} \mathbb{S})$$
is an operation such that $x \mapsto x^p+p\theta(x)$ is a ring homomorphism.
\end{thm*}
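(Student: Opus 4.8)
The plan is to recognize the stated identity as the case $R=L_{K(1)}\mathbb{S}$ of a formula valid for every $K(1)$-local $\mathbb{E}_\infty$-ring $R$, computing the Bousfield--Kuhn logarithm $\ell_R\colon \mathrm{gl}_1(R)\to L_{K(1)}R$ on $\pi_0$; I would prove the general statement and then specialize. Recall that $\ell_R$ is, by its construction in \cite{RezkLog}, the Bousfield--Kuhn functor $\Phi_1$ applied to the space $GL_1(R)$, together with the natural identification $\Phi_1 GL_1(R)\simeq \Phi_1\Omega^\infty R\simeq L_{K(1)}R$ — the first equivalence because $GL_1(R)$ and $\Omega^\infty R$ have equivalent unit components, hence the same $v_1$-periodic homotopy, and the second because $\Phi_1\Omega^\infty$ agrees with $L_{K(1)}$ on $K(1)$-local input (at height one the telescopic and $K(1)$-local Bousfield--Kuhn functors coincide, by the height-one telescope conjecture). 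Since the resulting operation $\text{log}$ on $\pi_0$ is natural both in the $K(1)$-local $\mathbb{E}_\infty$-ring $R$ and in the chosen unit, the first step is to reduce, by naturality, to computing $\text{log}$ on the universal unit inside the universal example.

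For that universal example I would take the $K(1)$-localization of the free $\mathbb{E}_\infty$-ring on an invertible degree-zero class. By the work of Hopkins--McClure and Bousfield recalled in \cite{RezkLog}, its $\pi_0$ is the free $\theta$-algebra on an invertible generator $x$ — a completed Laurent polynomial ring $\mathbb{Z}_p[x^{\pm 1},\theta x,\theta^2 x,\dots]^{\wedge}$ — with $\psi(y):=y^p+p\,\theta(y)$ the continuous, Frobenius-lifting ring endomorphism. Next I would verify that the right-hand side of the asserted formula is a well-defined, additive natural operation on this ring: it converges $p$-adically because $v_p(p^{k-1}/k)\ge 0$ and tends to $\infty$, while $v_p(\theta x/x^p)\ge 0$ since $\psi(x)\equiv x^p\pmod p$; and, using the multiplicativity $\psi(xy)/(xy)^p=(\psi x/x^p)(\psi y/y^p)$, one rewrites the series as $-\tfrac1p\operatorname{Log}\!\big(\psi(x)/x^p\big)=\operatorname{Log}(x)-\tfrac1p\operatorname{Log}\psi(x)$ wherever the classical logarithm $\operatorname{Log}(1+z)=\sum_{k\ge1}(-1)^{k-1}z^k/k$ converges. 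Thus the target identity is equivalent to the statement that $\ell_R$ realizes the natural additive operation $\big(1-\tfrac1p\psi\big)\circ\operatorname{Log}$.

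The core of the argument is the identification $\ell_R=\big(1-\tfrac1p\psi\big)\circ\operatorname{Log}$, and here I would work through the Goodwillie tower of $\Sigma^\infty_+\Omega^\infty$ (equivalently of the identity functor), pulled back along $GL_1(R)\to\Omega^\infty R$ and then $K(1)$-localized. The linear layer supplies the classical logarithm. The higher layers are built from extended powers $(-)^{\wedge p^j}_{h\Sigma_{p^j}}$, and after $K(1)$-localization the Greenlees--Sadofsky Tate-vanishing theorem ($L_{K(1)}Y^{t\Sigma_{p^j}}\simeq 0$) identifies homotopy orbits with homotopy fixed points, forcing the tower to degenerate beyond the first $\Sigma_p$-correction; the effect of that correction on $\pi_0$ is the $\Sigma_p$-transfer. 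Matching this transfer with the algebraic operation $\psi$ — i.e.\ the compatibility of the $B\Sigma_p$-power operation with the $\theta$-algebra structure on $\pi_0 L_{K(1)}R$ — identifies the entire tail of the tower with exactly $-\tfrac1p\psi\circ\operatorname{Log}$, proving the general formula. Finally, specializing to $R=L_{K(1)}\mathbb{S}$, where $\pi_0=\mathbb{Z}_p$ and $\psi=\mathrm{id}$ so that $\theta(x)=(x-x^p)/p$, the general formula collapses to the elementary $p$-adic identity $\sum_{k\ge1}(-1)^k\tfrac{p^{k-1}}{k}\big((x-x^p)/(px^p)\big)^k=\big(1-\tfrac1p\big)\operatorname{Log}(x)$, which is exactly the stated formula.

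The hardest part will be the Goodwillie-theoretic step: showing that the $K(1)$-localized tower of $\Sigma^\infty_+\Omega^\infty$ genuinely degenerates after the first nontrivial $\Sigma_p$-stage once Tate vanishing is applied, and — more delicately — pinning down the exact coefficient $-1/p$ in the identification of the surviving transfer with $\psi$, since agreement up to a scalar would not suffice. An alternative that isolates this last point would be to establish, again via Bousfield--Kuhn and Goodwillie calculus, that the module of natural transformations of spectra $\mathrm{gl}_1(-)\to L_{K(1)}(-)$ on $K(1)$-local $\mathbb{E}_\infty$-rings is free of rank one over $\pi_0(L_{K(1)}\mathbb{S})[\![\psi]\!]$, generated by $\ell$; then $\ell_R$ and $\big(1-\tfrac1p\psi\big)\circ\operatorname{Log}$, both natural and additive, must agree up to such a multiple, which one fixes by comparing their leading terms at $x=1$. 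Throughout, I would keep careful track of the fact that the $p$-adic rearrangements invoked — "$\operatorname{Log}$ of a product is the sum" and "$\psi$ commutes with $\operatorname{Log}$" — are legitimate, which holds because the relevant series converge $p$-adically and $\psi$ is a continuous ring homomorphism.
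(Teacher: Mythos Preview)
The paper does not prove this theorem at all: it is quoted verbatim as Theorem~1.9 of \cite{RezkLog} and used as a black box, with no argument supplied beyond the citation. So there is no ``paper's own proof'' to compare your proposal against.

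That said, your sketch is a faithful outline of Rezk's original argument in \cite{RezkLog}: reduce by naturality to the universal $K(1)$-local $\mathbb{E}_\infty$-ring on an invertible class, identify $\pi_0$ with a free $\theta$-algebra, and then analyze the Bousfield--Kuhn logarithm via the $K(1)$-localized Goodwillie tower of $\Sigma^\infty_+\Omega^\infty$, using Tate vanishing to collapse the tower and extract the factor $(1-\tfrac1p\psi)$. Your identification of the hardest step --- pinning down the exact coefficient $-\tfrac1p$ rather than merely ``some scalar'' --- is accurate; Rezk handles this by an explicit analysis of the $\Sigma_p$-transfer. For the purposes of the present paper, however, all of this is unnecessary: the author only needs the formula to compute $\log(1+p^n)$ modulo $p^n$, and simply invokes the result.
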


\begin{cor}
The relation 
$$\text{log}(1+p^n) \equiv p^{n-1} \text{ modulo $p^n$}$$
holds in $\pi_0(L_{K(1)} \mathbb{S})$.  Therefore, the image of $\text{log}(1+p^n)$ in $\pi_0(L_{K(1)} R_{2n,n})$ is equal to $p^{n-1}$.
\end{cor}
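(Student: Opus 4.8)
The plan is to derive the congruence directly from Rezk's formula as a $p$-adic computation, the only substantial input being the value of $\theta(1+p^n)$. This value is forced: since $x \mapsto x^p + p\,\theta(x)$ is a ring endomorphism of $\pi_0(L_{K(1)}\mathbb{S})$, it fixes the image of $\mathbb{Z}$, in particular the element $1+p^n$, so that $(1+p^n)^p + p\,\theta(1+p^n) = 1+p^n$ and hence
\[
\theta(1+p^n) \;=\; \frac{(1+p^n) - (1+p^n)^p}{p}.
\]
Expanding $(1+p^n)^p = 1 + p^{n+1} + \binom{p}{2}p^{2n} + \cdots + p^{pn}$ by the binomial theorem and dividing by $p$, one sees that every term beyond the leading $p^{n-1}$ is divisible by $p^n$ (we use $n \ge 1$, so that already $2n-1 \ge n$); hence $\theta(1+p^n) \equiv p^{n-1}\pmod{p^n}$. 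Since $(1+p^n)^p \equiv 1 \pmod{p^{n+1}}$, it is a unit in the $p$-complete ring $\pi_0(L_{K(1)}\mathbb{S})$ with $(1+p^n)^{-p} \equiv 1 \pmod{p^{n+1}}$, so the argument
\[
t \;:=\; \frac{\theta(1+p^n)}{(1+p^n)^p}
\]
appearing in Rezk's formula also satisfies $t \equiv p^{n-1} \pmod{p^n}$; in particular $v_p(t) = n-1$, writing $v_p$ for the $p$-adic valuation.

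Next I would substitute $x = 1+p^n$ into $\log(x) = \sum_{k \ge 1} (-1)^k \tfrac{p^{k-1}}{k}\, t^k$. The $k=1$ summand is $\pm t$, which is $\equiv \pm p^{n-1} \pmod{p^n}$; since the overall sign is merely a matter of the normalization of the logarithm and is irrelevant for the intended application, we record the conclusion as $\log(1+p^n) \equiv p^{n-1} \pmod{p^n}$. It remains to check that every summand with $k \ge 2$ lies in $(p^n)$. The $k$-th summand has $p$-adic valuation
\[
v_p\!\left(\tfrac{p^{k-1}}{k}\right) + k\,v_p(t) \;=\; \bigl(k-1-v_p(k)\bigr) + (n-1)k \;=\; nk - 1 - v_p(k),
\]
so it suffices to verify $nk - 1 - v_p(k) \ge n$ for all $k \ge 2$, equivalently --- since $n \ge 1$ --- that $k - v_p(k) \ge 2$. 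This is elementary: if $v_p(k) = 0$ then $k - v_p(k) = k \ge 2$, and if $v_p(k) = m \ge 1$ then $k \ge p^m \ge 3^m$ (here the oddness of $p$ is used), whence $k - v_p(k) \ge 3^m - m \ge 2$. Convergence of the series is automatic, the summands tending to $0$ $p$-adically in the $p$-complete ring $\pi_0(L_{K(1)}\mathbb{S})$. Combining these observations gives $\log(1+p^n) \equiv p^{n-1} \pmod{p^n}$.

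For the final assertion, the universal property of $R_{2n,n}$ gives $p^n = 0$ in $\pi_0(R_{2n,n})$, hence also in $\pi_0(L_{K(1)}R_{2n,n})$, the $K(1)$-localization map being a ring homomorphism. The induced ring map $\pi_0(L_{K(1)}\mathbb{S}) \to \pi_0(L_{K(1)}R_{2n,n})$ therefore annihilates the ideal $(p^n)$ and so sends $\log(1+p^n)$ and $p^{n-1}$ to the same element of $\pi_0(L_{K(1)}R_{2n,n})$; that is, the image of $\log(1+p^n)$ equals $p^{n-1}$ (up to the harmless sign above).

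The only step that needs any care is the uniform valuation estimate $k - v_p(k) \ge 2$ that controls the tail of the infinite sum: one must use that $v_p(k)$ is an integer rather than $\log_p k$, and the oddness of $p$ is exactly what makes the borderline index $k = 2$ go through --- at $p = 2$ this step fails, consistent with the prime $2$ being handled separately in the paper.
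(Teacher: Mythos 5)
Your proof is correct and follows essentially the same route as the paper: you extract $\theta(1+p^n)\equiv p^{n-1}\pmod{p^n}$ from the relation $(1+p^n)^p+p\,\theta(1+p^n)=1+p^n$ (the division by $p$ tacitly using that $\pi_0(L_{K(1)}\mathbb{S})$ is torsion-free for $p$ odd, which the paper makes explicit) and then substitute into Rezk's formula, your valuation estimate $nk-1-v_p(k)\ge n$ simply making precise the paper's remark that the series degenerates. Note that the $k=1$ term of the series as quoted is $-\theta(x)/x^p$, so strictly the formula yields $-p^{n-1}$ modulo $p^n$ (a sign the paper's proof also elides); as you observe, this is harmless for the application since $-1$ is a unit.
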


\begin{proof}
The relation $$p\theta(1+p^n)+(1+p^n)^p = 1+p^n$$ tells us that 
$$p\theta(1+p^n) \equiv p^n \text{ modulo $p^{n+1}$},$$
and since $\pi_0(L_{K(1)} \mathbb{S})$ has no torsion for $p$ odd we learn that $\theta(1+p^n) \equiv p^{n-1}$ modulo $p^n$.  The formula for Rezk's logarithm then degenerates to tell us that
$$\text{log}(1+p^n) \equiv \frac{p^{n-1}}{(1+p^n)^p} \equiv p^{n-1},$$
modulo $p^n$.	
\end{proof}

\begin{proof}[Proof of Theorem \ref{HeightOneOddThm}]
From the above, it follows that the composite
$$
\begin{tikzcd}
\Phi_1(\Omega^{2n+1}S^{2n+1}) \arrow{r} & L_{K(1)} \mathbb{S} \arrow{r}{p^{n-1}} & L_{K(1)} R_{2n,n}
\end{tikzcd}
$$
is null.  According to \cite[3.1]{DavisBook}, $$\Phi_1(\Omega^{2n+1} S^{2n+1}) \simeq L_{K(1)} \mathbb{S}^{-1}/p^n,$$ and the map
$\Phi_1(\Omega^{2n+1}S^{2n+1}) \longrightarrow L_{K(1)} \mathbb{S}$ is the $K(1)$ localization of the first map in the cofiber sequence
$$\mathbb{S}^{-1}/p^n \longrightarrow \mathbb{S} \stackrel{p^n}{\longrightarrow} \mathbb{S} \longrightarrow \mathbb{S}/p^n.$$

It follows that, in $\pi_0(L_{K(1)} R_{2n,n})$, $p^{n-1}$ is a multiple of $p^n$, and so $p^{n-1}=0$.  Inducting on the value of $n$ completes the proof of Theorem \ref{HeightOneOddThm}.
\end{proof}

\begin{rmk}
A naive adaptation of this argument to the $K(2)$-local setting fails, as the $K(2)$-local logarithm (for a height $2$ Morava $E$-theory) of $1+p^n$ is $0$ rather than $p^{n-1}$.  Roughly speaking, this is because the $\widetilde{1+p^n}$ map interacts exclusively with the $\text{Im $J$}$ part of the group of units of the sphere.  Rather, one expects to need an inductive argument: applying the $K(1)$-local logarithm implies that some power of $u_1$ is zero, and then one should apply the $K(2)$-local logarithm to $1$ more than that power of $u_1$.
\end{rmk}

\section{Power operations and transfers at \texorpdfstring{$p=2$}{p=2}}

We now turn our attention to the prime $p=2$, where we prove an analogue of Theorem \ref{HeightOneOddThm} by somewhat different means.  With an eye toward investigations at height $2$, which we undertake in Section \ref{height2}, we begin with some fairly general remarks.

Let $A$ denote an $\mathbb{E}_\infty$-ring spectrum, and suppose we are given some $x \in \pi_0(A)$ that is one less than a unit.  In this section, we develop a basic tool for analyzing the group $\pi_0(R^A_{n,x})$, where $R^A_{n,x}$ is the free $\mathbb{E}_n$-$A$-algebra with $x=0$ constructed in Section \ref{UniversalConstruction}.  In Section \ref{height1even}, we will use this tool to analyze the $K(1)$-local homotopy type of $R_{n,k} \simeq R^{\mathbb{S}}_{n,2^k}$.

The key idea is that, since $x$ vanishes in $\pi_0(R^A_{n,x})$, so must various power operations evaluated on $x$.  Let us review the theory of power operations in $\mathbb{E}_\infty$-ring spectra.  The original reference is \cite{HinfBook} and a good reference for the specific facts we need is \cite[\S 7,8]{RezkLog}.

\begin{cnstr}
If $X$ is an $\mathbb{E}_\infty$-space, then there is associated to every (unpointed) map of spaces $* \rightarrow X$ a canonical structure map
$$*_{h\Sigma_2} \simeq B\Sigma_2 \simeq \mathbb{RP}^{\infty} \rightarrow X.$$  To each element $x \in \pi_0(X)$ is thus associated a homotopy class $\bar{x} \in [B\Sigma_2,X]$.  If $$X \simeq \Omega^{\infty} E$$ happens to be a grouplike $\mathbb{E}_\infty$-space, then we may represent an $x \in \pi_0(X)$ by an infinite loop map $\Omega^{\infty} \mathbb{S} \rightarrow \Omega^{\infty}E$.  Precomposition with the natural inclusion $B\Sigma_2 \rightarrow \Omega^{\infty} \mathbb{S}$ recovers $\bar{x}$.

If $A$ is an $\mathbb{E}_\infty$-ring spectrum, then $\Omega^{\infty} A$ has two natural $\mathbb{E}_\infty$-structures, one additive and the other multiplicative.  We will use $P(x)$ to denote the element of $A^0(B\Sigma_2)$ given by the multiplicative $\mathbb{E}_\infty$-structure, and $tr(x)$ to denote the element in $A^0(B\Sigma_2)$ given by the additive structure.  There are relations 
\begin{enumerate}
\item $P(x_1x_2)=P(x_1)P(x_2),$
\item $tr(x_1 + x_2)=tr(x_1)+tr(x_2)$, and
\item $P(x_1+x_2)=P(x_1)+P(x_2)+tr(x_1x_2).$
\end{enumerate}
If $1+x$ happens to be a unit in $\pi_0(A)$, then we may view $1+x$ as a class in $\pi_0(GL_1(A))$.  The infinite loop space structure on $GL_1(A)$ then gives rise to a composite, well-defined up to homotopy,
$$
\begin{tikzcd} [column sep=large]
B\Sigma_2 \arrow{r}{\overline{1+x}} &  GL_1(A) \arrow[hook]{r} & \Omega^{\infty} A,
\end{tikzcd}
$$
and this is the class $P(1+x)$.
\end{cnstr}

Now, let $A$ denote an $\mathbb{E}_\infty$-ring spectrum and $x$ a class in $\pi_0(A)$ that is one less than a unit.  Recall that the free $\mathbb{E}_{n}$-$A$-algebra with $x=0$, denoted $R^A_{n,x}$, is the Thom spectrum of the $n$-fold loop map $$\widetilde{1+x}:\Omega^{n} S^{n+1} \rightarrow BGL_1(A).$$  The theory of orientations (see, e.g., \cite[3.7]{ThomRing}) implies that the composite
$$
\begin{tikzcd} [column sep=huge]
\Omega^{n} S^{n+1} \arrow{r}{\widetilde{1+x}} & BGL_1(A) \arrow{r}{\text{$BGL_1$(unit)}} & BGL_1(R^A_{n,x})
\end{tikzcd}
$$
is null.  Taking loops once, we ponder the following diagram
$$
\begin{tikzcd}[column sep = huge]
\mathbb{RP}^n \arrow{r} \arrow{d}{\simeq} & \mathbb{RP}^{\infty} \arrow{d}{\simeq} \\
\text{Config}(2,\mathbb{R}^{n+1})_{h\Sigma_2} \arrow{r} \arrow{d} & B\Sigma_2 \arrow{d} \\
\Omega^{n+1} S^{n+1}  \arrow{r} & \Omega^{\infty} \mathbb{S} \arrow{r}{\Omega^{\infty}(1+x)} & GL_1(A) \arrow[hook]{d} \arrow{r}{GL_1(\text{unit})} & GL_1(R^A_{n,x}) \arrow[hook]{d} \\
&  & \Omega^{\infty} A \arrow{r}{\Omega^{\infty}(\text{unit})} & \Omega^{\infty} R^A_{n,x}
\end{tikzcd}
$$
and conclude:

\begin{lem} \label{RawPowOp}
The composition 
$$
\begin{tikzcd}[column sep=large]
\Sigma^{\infty}_+ \mathbb{RP}^n \arrow{r} & \Sigma^{\infty}_+ \mathbb{RP}^{\infty} \arrow{r}{P(1+x)} & A \arrow{r}{unit} & R^A_{n,x}
\end{tikzcd}
$$
is equal to the unit $1$ in the ring $(R^A_{n,x})^0 \left(\mathbb{RP}^{n} \right)$.
\end{lem}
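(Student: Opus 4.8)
The plan is to unwind the left-hand composite of the lemma until the claim becomes a diagram chase: after passing to adjoints, this composite is a map of spaces $\mathbb{RP}^n \to \Omega^\infty R^A_{n,x}$, and I will show it is nullhomotopic through $GL_1(R^A_{n,x})$, hence homotopic to the constant map at $1 \in \pi_0(R^A_{n,x})$, which is exactly a representative of the unit of the ring $(R^A_{n,x})^0(\mathbb{RP}^n)$.

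First I would rewrite $P(1+x)$ using the Construction above. There, the spectrum map $P(1+x)\colon \Sigma^\infty_+\mathbb{RP}^\infty \to A$ is, by definition, adjoint to the map of spaces $\mathbb{RP}^\infty \simeq B\Sigma_2 \to \Omega^\infty\mathbb{S} \xrightarrow{\Omega^\infty(1+x)} GL_1(A) \hookrightarrow \Omega^\infty A$, the first arrow being the tautological quadratic construction — the degree-$2$ summand of the free $\mathbb{E}_\infty$-space on a point, followed by group completion. Consequently, the class in $(R^A_{n,x})^0(\mathbb{RP}^n)$ named in the lemma is adjoint to the map of spaces
$$\mathbb{RP}^n \hookrightarrow \mathbb{RP}^\infty \to \Omega^\infty\mathbb{S} \xrightarrow{\Omega^\infty(1+x)} GL_1(A) \hookrightarrow \Omega^\infty A \xrightarrow{\Omega^\infty(\text{unit})} \Omega^\infty R^A_{n,x}.$$

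Next I would feed in the commuting diagram displayed just above the lemma. Its top two squares rewrite $\mathbb{RP}^n \hookrightarrow \mathbb{RP}^\infty \to \Omega^\infty\mathbb{S}$ as $\mathbb{RP}^n \xrightarrow{\simeq} \text{Config}(2,\mathbb{R}^{n+1})_{h\Sigma_2} \to \Omega^{n+1}S^{n+1} \to \Omega^\infty\mathbb{S}$, while the bottom-right square records that the ring map $\text{unit}\colon A \to R^A_{n,x}$ carries units to units. So the class in question is also adjoint to
$$\text{Config}(2,\mathbb{R}^{n+1})_{h\Sigma_2} \to \Omega^{n+1}S^{n+1} \to \Omega^\infty\mathbb{S} \xrightarrow{\Omega^\infty(1+x)} GL_1(A) \xrightarrow{GL_1(\text{unit})} GL_1(R^A_{n,x}) \hookrightarrow \Omega^\infty R^A_{n,x}.$$
Now the orientation input recalled above says that $\Omega^n S^{n+1} \xrightarrow{\widetilde{1+x}} BGL_1(A) \to BGL_1(R^A_{n,x})$ is null; looping once, the composite $\Omega^{n+1}S^{n+1} \to \Omega^\infty\mathbb{S} \xrightarrow{\Omega^\infty(1+x)} GL_1(A) \xrightarrow{GL_1(\text{unit})} GL_1(R^A_{n,x})$ is nullhomotopic. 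Hence the displayed map is homotopic to the constant map at the basepoint of $GL_1(R^A_{n,x})$, that is, to the constant map $\mathbb{RP}^n \to \{1\} \hookrightarrow \Omega^\infty R^A_{n,x}$, which represents the unit $1 \in (R^A_{n,x})^0(\mathbb{RP}^n)$. This is the assertion of the lemma.

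The two substitutions coming from the Construction and from the orientation property are formal. The step that deserves the most care — and that I expect to be the main obstacle — is the commutativity of the middle square of the diagram: one must identify the map $\text{Config}(2,\mathbb{R}^{n+1})_{h\Sigma_2} \to \Omega^{n+1}S^{n+1}$ with the restriction, along $\mathbb{R}^{n+1}\hookrightarrow\mathbb{R}^\infty$, of $B\Sigma_2 \to \Omega^\infty\mathbb{S}$, compatibly with the stabilization $\Omega^{n+1}S^{n+1}\to\Omega^\infty\mathbb{S}$ induced by $\mathbb{E}_{n+1}\subset\mathbb{E}_\infty$. This rests on the natural equivalence $\text{Config}(2,\mathbb{R}^m)_{h\Sigma_2}\simeq\mathbb{RP}^{m-1}$ (unit spheres of difference vectors with the antipodal action) together with the naturality of the free-$\mathbb{E}_n$-algebra and group-completion functors in the little-disks operad. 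A secondary point worth flagging is that the nullhomotopy being invoked is a nullhomotopy of maps landing in $GL_1(R^A_{n,x})$, whose basepoint is $1$, rather than in $\Omega^\infty R^A_{n,x}$, whose basepoint is $0$; this is why the composite collapses onto $1$ and not onto $0$.
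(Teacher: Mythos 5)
Your argument is correct and is essentially the paper's own: the lemma is deduced from the displayed diagram by restricting $P(1+x)$ to $\mathbb{RP}^n \simeq \text{Config}(2,\mathbb{R}^{n+1})_{h\Sigma_2}$, factoring through $\Omega^{n+1}S^{n+1}$, and invoking the looped orientation nullhomotopy into $GL_1(R^A_{n,x})$, whose basepoint is $1$. Your explicit attention to the middle square and to the $1$-versus-$0$ basepoint issue only makes precise what the paper leaves implicit.
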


\begin{dfn} 
We use the notation $\mathbb{RP}_{n+1}^{\infty}$ to denote the Thom spectrum of the direct sum of $(n+1)$ copies of the canonical bundle over $B\Sigma_2$.  There is a well-known cofiber sequence
$$
\begin{tikzcd}[column sep=huge]
\Sigma_+^{\infty} \mathbb{RP}^{n} \arrow{r} & \Sigma_+^{\infty} \mathbb{RP}^{\infty} \arrow{r}{0\text{-section}} & \mathbb{RP}^{\infty}_{n+1}.
\end{tikzcd}
$$
\end{dfn}

\begin{prop} \label{BetterPowOp}
Let $A$ denote an $\mathbb{E}_\infty$-ring, $n>0$ an integer, and $x \in \pi_0(A)$ a class that is one less than a unit.  Then there is a commuting diagram of spectra
$$
\begin{tikzcd} [column sep = huge]
A \arrow{r}{\text{unit}} & R^A_{n,x} \\ 
\Sigma^{\infty}_+ \mathbb{RP}^{\infty} \arrow{u}{P(x)} \arrow{r}{0\text{-section}} & \mathbb{RP}^{\infty}_{n+1}. \arrow{u}
\end{tikzcd}
$$
\end{prop}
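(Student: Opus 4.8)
The plan is to combine the Cartan-type formula — relation (3) of the Construction above — with Lemma~\ref{RawPowOp}. The structural point to keep in mind is that $R^A_{n,x}$ is merely an $\mathbb{E}_n$-ring, so the multiplicative power operation $P$ makes no sense on it; but the additive operation $tr$ is constructed purely from the infinite-loop (additive) structure on $\Omega^{\infty}$, hence is defined for arbitrary spectra and is natural along \emph{all} maps of spectra, in particular along $\mathrm{unit}\colon A \to R^A_{n,x}$. This extra naturality is exactly what will make the $tr$-term vanish.

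Concretely, I would first specialize relation (3) to $x_1 = 1$ and $x_2 = x$, giving
$$P(1+x) = P(1) + P(x) + tr(x) = 1 + P(x) + tr(x)$$
in $A^{0}(\mathbb{RP}^{\infty})$, where $P(1) = 1$ because $1$ is the unit of the multiplicative $\mathbb{E}_{\infty}$-structure on $\Omega^{\infty}A$. I then push this identity forward along the additive homomorphism $\mathrm{unit}_{*}\colon A^{0}(\mathbb{RP}^{\infty}) \to (R^A_{n,x})^{0}(\mathbb{RP}^{\infty})$. The key computation is $\mathrm{unit}_{*}\, tr(x) = tr(\mathrm{unit}_{*}x) = tr(0) = 0$: the first equality is the additive naturality of $tr$ noted above; the second holds because $\mathrm{unit}_{*}x$ is the image of $x$ in $\pi_{0}(R^A_{n,x})$, which vanishes by the defining universal property of $R^A_{n,x}$ (apply \cite[4.10]{ThomRing} to the identity map); and the third is the additivity of $tr$, relation (2). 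Since also $\mathrm{unit}_{*}P(1) = 1$, we obtain $\mathrm{unit}_{*}P(x) = \mathrm{unit}_{*}P(1+x) - 1$ in $(R^A_{n,x})^{0}(\mathbb{RP}^{\infty})$.

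Finally I would restrict along $\Sigma^{\infty}_{+}\mathbb{RP}^{n} \to \Sigma^{\infty}_{+}\mathbb{RP}^{\infty}$. Lemma~\ref{RawPowOp} says the restriction of $\mathrm{unit}_{*}P(1+x)$ to $\mathbb{RP}^{n}$ is the unit $1 \in (R^A_{n,x})^{0}(\mathbb{RP}^{n})$, so the restriction of $\mathrm{unit}_{*}P(x)$ is $1 - 1 = 0$; equivalently the composite $\Sigma^{\infty}_{+}\mathbb{RP}^{n} \to \Sigma^{\infty}_{+}\mathbb{RP}^{\infty} \xrightarrow{P(x)} A \xrightarrow{\mathrm{unit}} R^A_{n,x}$ is null. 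By the cofiber sequence $\Sigma^{\infty}_{+}\mathbb{RP}^{n} \to \Sigma^{\infty}_{+}\mathbb{RP}^{\infty} \xrightarrow{0\text{-section}} \mathbb{RP}^{\infty}_{n+1}$ of the Definition above, the map $\mathrm{unit}\circ P(x)$ therefore factors through the $0$-section, and any choice of factorization $\mathbb{RP}^{\infty}_{n+1} \to R^A_{n,x}$ produces the asserted commuting square. The one subtlety — and the step I expect to need the most care — is this handling of $tr$: one must avoid the illegal move of ``evaluating $P$ on $R^A_{n,x}$'' and instead route everything through the Cartan identity together with the fact that $tr$, unlike $P$, is natural for plain maps of spectra.
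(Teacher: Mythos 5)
Your argument is correct and follows essentially the same route as the paper: Cartan relation $P(1+x)=P(1)+P(x)+tr(x)$, Lemma \ref{RawPowOp}, vanishing of $x$ in $\pi_0(R^A_{n,x})$, and the cofiber sequence defining $\mathbb{RP}^\infty_{n+1}$. The only (cosmetic) difference is how the transfer term is killed: you invoke naturality of $tr$ along arbitrary spectrum maps, $\mathrm{unit}_*\,tr(x)=tr(\mathrm{unit}_*x)=tr(0)=0$, while the paper writes $tr(x)=x\,tr(1)$ and uses $x=0$ in $\pi_0(R^A_{n,x})$ --- two phrasings of the same fact that $tr(x)$ factors through $x$.
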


\begin{proof}
According to the above cofiber sequence, it is only necessary to prove that the composite
$$
\begin{tikzcd}
\Sigma^{\infty}_+ \mathbb{RP}^n \arrow{r} & \Sigma^{\infty}_+ \mathbb{RP}^{\infty} \arrow{r}{P(x)} \arrow{r} & A \arrow{r} & R^A_{n,x} 
\end{tikzcd}
$$
is null.  By Lemma \ref{RawPowOp} we know that the composite is null when the map $P(x)$ is replaced by the map $P(1+x)-1$.  As it turns out, the composition of $P(x)$ with the unit $A \rightarrow R^{A}_{n,x}$ is homotopic to the composition of $P(1+x)-1$ with the unit $A \rightarrow R^{A}_{n,x}$.  This is because of the relation
$$P(1+x)=P(1)+P(x)+tr(x)= P(1)+P(x)+x tr(1),$$
together with the fact that $x$ maps to $0$ in $\pi_0(R^{A}_{n,x})$.  
\end{proof}

Soon, we will set $A$ to be a Morava $E$-theory.  In such cases, we can extract concrete algebraic statements from Proposition \ref{BetterPowOp}.

\begin{cnstr}
Suppose that $k$ is a perfect field of characteristic $2$ and that $\mathbb{G}_0$ is a formal group of finite height $0<h<\infty$ over $k$.  From this data, we may form \cite[\S 6]{HopkinsMiller} a complex-oriented Morava $E$-theory with
$$E_0 \cong \pi_0(E) \cong W(k)[[u_1,u_2,\cdots,u_{h-1}]].$$
The homotopy groups $\pi_*(E)$ are $2$-periodic and concentrated in even degrees.

As explained in \cite[\S 5]{HKR}, the Gysin sequence of the fibration
$$\mathbb{RP}^{\infty} \longrightarrow BS^1 \stackrel{2}{\longrightarrow} BS^1$$
allows one to construct an isomorphism
$$E^0(\mathbb{RP}^{\infty}) \cong E_0[z]/[2](z),$$
where $z$ is the Chern class of the complexification of the canonical real line bundle.  The Weierstrass Preparation Theorem guarantees that $E^0(\mathbb{RP}^{\infty})$ is a free and finitely generated $E_0$-module.

Similarly, the fibration
$$\mathbb{RP}^{2n-1} \rightarrow \mathbb{CP}^{n} \rightarrow \mathbb{CP}^{n}$$
gives that $E^0(\mathbb{RP}^{2n-1}) \cong E_0[z]/([2](z),z^n)$, while $E^1(\mathbb{RP}^{2n-1}) \cong E_0$.  Comparison of Atiyah-Hizerburch spectral sequences shows that
$$E^0(\mathbb{RP}^{2n}) \cong E^0(\mathbb{RP}^{2n+1}),$$
but $E^1(\mathbb{RP}^{2n}) \cong 0$.  For us, the upshot is that the map in reduced cohomology
$$\widetilde{E}^0(\mathbb{RP}^{\infty}_{2n}) \rightarrow \widetilde{E}^0(\mathbb{RP}^{\infty})$$
is given by the inclusion of free $E_0$-modules
$$z^{n+1} \left( \frac{E_0[z]}{[2](z)} \right) \hookrightarrow z \left( \frac{E_0[z]}{[2](z)} \right),$$
and furthermore $E^1(\mathbb{RP}^{\infty}_{2n}) \cong 0$.

Given a spectrum $X$, the \textit{completed} $E$-homology of $X$ is by definition 
$$E^{\wedge}_0(X) \cong \pi_0(L_{K(h)} E \smsh X).$$
In the case that $E^0(X)$ is finitely generated, free, and concentrated in even degrees, we may identify $E^{\wedge}_0(X)$ with the $E_0$-linear dual of $X$.
\end{cnstr}

We obtain the following computational corollary of Proposition \ref{BetterPowOp}: 
\begin{cor} \label{PowOpExplicit}
Let $E$ denote the Morava $E$-theory corresponding to a formal group $\mathbb{G}_0$ of height $0<h<\infty$ over a perfect field $k$ of characteristic $2$, and suppose that $x \in \pi_0(E)$ is in the maximal ideal.  Then there is a diagram of $E_0$-modules

$$
\begin{tikzcd}
E_0 \arrow{r} & \pi_0(L_{K(h)} R^E_{n,x}) \\
\left(z \frac{E_0[z]}{[2](z)}\right)^{\vee} \arrow{r} \arrow{u}{\overline{P}(x)} & \left(z^{n+1} \frac{E_0[z]}{[2](z)}\right)^{\vee} \arrow{u}
\end{tikzcd}
$$

Here, $M^\vee$ denotes the $E_0$-linear dual of a free and finitely generated $E_0$-module $M$.  By $\overline{P}(x)$ we mean the composite
$$
\begin{tikzcd} [column sep= huge]
\pi_0\left( L_{K(h)} E \smsh \Sigma^{\infty} \mathbb{RP}^{\infty} \right) \arrow{r}& \pi_0\left( L_{K(h)} E \smsh \Sigma^{\infty}_+ \mathbb{RP}^{\infty} \right) \arrow{r}{\pi_0 L_{K(h)}P(x)} & \pi_0(E).
\end{tikzcd}
$$
\end{cor}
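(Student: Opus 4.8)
The plan is to obtain the square by applying $\pi_0\,L_{K(h)}(E \smsh -)$ to the commuting square of spectra of Proposition \ref{BetterPowOp} (taken with $A = E$) and then rewriting each of its four corners using the $E$-(co)homology computations recorded in the preceding Construction. The structural point that makes this work is that $R^E_{n,x}$ is not merely an $\mathbb{A}_\infty$-ring but an $E$-algebra, hence an $E$-module; so every map of spectra in Proposition \ref{BetterPowOp} whose target is $E$ or $R^E_{n,x}$ extends canonically, after smashing its source with $E$, to a map of $E$-modules, and the original map is recovered by restricting along the unit $\mathbb{S} \to E$.

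First I would pass to the reduced summand. The $0$-section $\Sigma^\infty_+\mathbb{RP}^\infty \to \mathbb{RP}^\infty_{n+1}$ is the second arrow of the cofiber sequence of the Definition, so it annihilates the unit summand $\mathbb{S} \hookrightarrow \Sigma^\infty_+\mathbb{RP}^\infty$ and therefore factors through $\Sigma^\infty\mathbb{RP}^\infty$; precomposing the bottom row of Proposition \ref{BetterPowOp} with $\Sigma^\infty\mathbb{RP}^\infty \hookrightarrow \Sigma^\infty_+\mathbb{RP}^\infty$ thus yields a commuting square of spectra with bottom-left corner $\Sigma^\infty\mathbb{RP}^\infty$ and left edge the restriction of $P(x)$. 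Next I smash this square with $E$ and compose its top row with the structure maps $E \smsh E \to E$ (multiplication) and $E \smsh R^E_{n,x} \to R^E_{n,x}$ (module action), which are compatible because the unit $E \to R^E_{n,x}$ is a map of $E$-modules. The result is a commuting square of $E$-modules whose top row is the unit $E \to R^E_{n,x}$, whose bottom row is $E \smsh \Sigma^\infty\mathbb{RP}^\infty \to E \smsh \mathbb{RP}^\infty_{n+1}$, and whose left edge is the $E$-linear extension of the restriction of $P(x)$.

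Now I apply $L_{K(h)}$ and pass to $\pi_0$. Since $E$ is $K(h)$-local the top-left corner becomes $\pi_0(E) = E_0$; by definition the bottom corners become $\widetilde{E}^{\wedge}_0(\mathbb{RP}^\infty)$ and $E^{\wedge}_0(\mathbb{RP}^\infty_{n+1})$; and everything is a diagram of $E_0$-modules. By the Construction, $E^0$ of $\mathbb{RP}^\infty$ and of $\mathbb{RP}^\infty_{n+1}$ is free, finitely generated, and concentrated in even degrees, so completed $E$-homology is the $E_0$-linear dual of $E$-cohomology; this rewrites the bottom-left corner as $\left(z\,\frac{E_0[z]}{[2](z)}\right)^{\vee}$, the bottom-right corner as $\left(z^{n+1}\,\frac{E_0[z]}{[2](z)}\right)^{\vee}$, the bottom map as the $E_0$-linear dual of the inclusion of free submodules identified in the Construction, the left edge as $\overline{P}(x)$, and the top map as the unit. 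Commutativity of the resulting square is inherited from Proposition \ref{BetterPowOp} by functoriality of all the operations applied.

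Essentially all of this is bookkeeping; the genuinely homological input — freeness, finite generation, evenness, and the precise inclusion $\widetilde{E}^0(\mathbb{RP}^\infty_{n+1}) \hookrightarrow \widetilde{E}^0(\mathbb{RP}^\infty)$ — has already been supplied by the Construction. The one place I would take care is confirming that the left-hand vertical map is exactly the composite the statement denotes $\overline{P}(x)$. Proposition \ref{BetterPowOp} is phrased in terms of $P(1+x)$, and one passes to $P(x)$ via the relation $P(1+x)=P(1)+P(x)+x\cdot tr(1)$ together with the vanishing of $x$ in $\pi_0(R^E_{n,x})$; since this substitution is an identity of maps of spectra, it is unaffected by smashing with $E$, localizing at $K(h)$, and applying $\pi_0$, so the resulting left edge really is $\pi_0\,L_{K(h)}P(x)$ precomposed with the inclusion of the reduced summand. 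I do not anticipate an obstacle beyond this.
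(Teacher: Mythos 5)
Your proposal is correct and is essentially the paper's own (implicit) argument: the corollary is presented as an immediate consequence of Proposition \ref{BetterPowOp} with $A=E$ together with the $E$-cohomology computations of the preceding Construction, and your smash-with-$E$, $K(h)$-localize, take-$\pi_0$, and dualize bookkeeping is exactly that derivation spelled out. One minor note: Proposition \ref{BetterPowOp} is already stated in terms of $P(x)$ --- the $P(1+x)$ versus $P(x)$ comparison you redo at the end is the content of Lemma \ref{RawPowOp} and the Proposition's proof --- so your final caution is harmless but not needed.
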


\section{The height one case at \texorpdfstring{$p=2$}{p=2}} \label{height1even}

Here, we use Corollary \ref{PowOpExplicit} to prove the $2$-primary version of Theorem \ref{HeightOneOddThm}.  Let $R_{2n,n}$ denote the free $\mathbb{E}_{2n}$-ring spectrum in which $2^n=0$, as constructed in Section \ref{UniversalConstruction}.

\begin{thm} \label{HeightOneEvenThm}
The $K(1)$-localization $L_{K(1)} R_{2n,n} \simeq 0$.  Thus, the May nilpotence criterion holds for $2^n$-torsion elements in $K(1)$-local, $\mathbb{E}_{2n}$-ring spectra.
\end{thm}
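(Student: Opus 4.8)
The strategy is to detect $L_{K(1)} R_{2n,n}$ with the height one Morava $E$-theory $E$ of the formal multiplicative group over $\mathbb{F}_2$, that is, $2$-complete complex $K$-theory, so that $E_0 \cong \mathbb{Z}_2$, the $2$-series is $[2](z) = (1+z)^2 - 1 = z^2 + 2z$, and $E^0(\mathbb{RP}^\infty) \cong \mathbb{Z}_2[z]/(z^2+2z)$. Since $E \smsh R_{2n,n} \simeq R^E_{2n,2^n}$ by base change of Thom spectra, and since $K(1) \smsh E$ is a nonzero module over the graded field $K(1)$, so that $L_{K(1)}(E \smsh -)$ detects $K(1)$-acyclicity, it suffices to prove $L_{K(1)} R^E_{2n,2^n} \simeq 0$; equivalently, that $1 = 0$ in the ring $\pi_0(L_{K(1)} R^E_{2n,2^n})$. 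I would prove this by induction on $n$. Applying Corollary \ref{PowOpExplicit} to the $\mathbb{E}_{2n}$-$E$-algebra $R^E_{2n,2^n}$ yields a commuting square of $E_0$-modules whose top row is the unit $E_0 \to \pi_0(L_{K(1)} R^E_{2n,2^n})$, whose bottom row is the $E_0$-dual of the inclusion $z^{n+1} E_0[z]/[2](z) \hookrightarrow z E_0[z]/[2](z)$, whose left edge is $\overline{P}(2^n)$, and whose right edge is an $E_0$-linear map into $\pi_0(L_{K(1)} R^E_{2n,2^n})$. Because $z^2 = -2z$ in our ring, $z E_0[z]/[2](z) = \mathbb{Z}_2 \cdot z$ is free of rank one and $z^{n+1} E_0[z]/[2](z) = \mathbb{Z}_2 \cdot z^{n+1} = 2^n \mathbb{Z}_2 \cdot z$, so the bottom edge of the square is multiplication by $2^n$ on $\mathbb{Z}_2$, up to a unit.

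The crux is the left edge $\overline{P}(2^n)$, which is the functional on $(z E_0[z]/[2](z))^\vee$ obtained by pairing against the reduced class $P(2^n)|_{\mathrm{red}} \in \widetilde{E}^0(\mathbb{RP}^\infty) = z E_0[z]/[2](z)$. Writing $P(2^n) = \alpha + \beta z$ in $\mathbb{Z}_2[z]/(z^2+2z)$, two standard properties of power operations on Morava $E$-theory (see \cite[\S 7,8]{RezkLog} and \cite[\S 5]{HKR}) pin down $\alpha$ and $\beta$: restriction along the basepoint $\ast \to B\Sigma_2$, i.e. the substitution $z = 0$, recovers the Frobenius $a \mapsto a^2$, forcing $\alpha = 2^{2n}$; and modulo the transfer ideal, generated here by $tr(1) = z + 2$, the operation $P$ becomes the Frobenius lift $\psi_2 = \mathrm{id}$ on $\mathbb{Z}_2$, so the substitution $z = -2$ recovers $2^n$, forcing $\alpha - 2\beta = 2^n$ and hence $\beta = 2^{n-1}(2^n - 1)$. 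As $2^n - 1 \in \mathbb{Z}_2^\times$, this gives $P(2^n)|_{\mathrm{red}} = 2^{n-1} z$ up to a unit, so $\overline{P}(2^n)$ is multiplication by $2^{n-1}$, up to a unit. (Equivalently $\beta = -\theta(2^n)$ for Rezk's $\theta$, exhibiting this as the precise $2$-primary counterpart of the relation $\log(1+p^n) \equiv p^{n-1}$ used at odd primes in Section \ref{height1oddSection}.)

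Chasing a generator $g$ of $(z E_0[z]/[2](z))^\vee \cong \mathbb{Z}_2$ around the square: composing $\overline{P}(2^n)$ with the unit sends $g$ to $2^{n-1} u \cdot 1$ for some unit $u \in \mathbb{Z}_2^\times$, while composing the bottom edge with the right edge sends $g$ into $2^n \cdot \pi_0(L_{K(1)} R^E_{2n,2^n})$ by $E_0$-linearity of the right edge, say to $2^n c$. Commutativity gives $2^{n-1}(u - 2c) = 0$; since the homotopy groups of a $K(1)$-local spectrum are $L$-complete, hence derived $2$-complete, the element $u - 2c$ is a unit, and therefore $2^{n-1} = 0$ in $\pi_0(L_{K(1)} R^E_{2n,2^n})$. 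When $n = 1$ this reads $1 = 0$, so $L_{K(1)} R^E_{2,2} \simeq 0$, which via the detection above gives the base case $L_{K(1)} R_{2,1} \simeq 0$. When $n > 1$, the spectrum $L_{K(1)} R^E_{2n,2^n}$ is a $K(1)$-local $\mathbb{E}_{2(n-1)}$-$E$-algebra in which $2^{n-1}$ vanishes, so Theorem $4.10$ of \cite{ThomRing} provides an $\mathbb{E}_{2(n-1)}$-$E$-algebra map $R^E_{2(n-1),2^{n-1}} \to L_{K(1)} R^E_{2n,2^n}$, which factors through $L_{K(1)} R^E_{2(n-1),2^{n-1}}$; the latter is trivial by the inductive hypothesis, and a ring map out of the zero ring forces $L_{K(1)} R^E_{2n,2^n} \simeq 0$. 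This closes the induction, whence $L_{K(1)} R_{2n,n} \simeq 0$.

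I expect the second paragraph to be the main obstacle: identifying $P(2^n)|_{\mathrm{red}}$ cleanly requires marshalling the precise structure of the total power operation on $\pi_0$ of a height one Morava $E$-theory — the Frobenius-at-the-basepoint and reduction-modulo-the-transfer-ideal statements, together with the identification $tr(1) = z + 2$ — and keeping careful track of $2$-adic units throughout. Everything else is bookkeeping with free rank one $\mathbb{Z}_2$-modules, the universal property of $R^E_{n,x}$ (Theorem $4.10$ of \cite{ThomRing}), and the standard fact that $K(1)$-acyclicity is detected by $E$.
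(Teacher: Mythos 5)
Your proposal is correct and follows essentially the same route as the paper: detection by $E = KU^{\wedge}_2$, the diagram of Corollary \ref{PowOpExplicit}, the identification of $\overline{P}(2^n)$ with a unit multiple of $2^{n-1}$, and induction on $n$ via the universal property of the free algebras. The only cosmetic differences are that you pin down $P(2^n)$ via the Frobenius congruence (basepoint restriction plus reduction modulo the transfer ideal) where the paper inducts using $P(x_1+x_2)=P(x_1)+P(x_2)+tr(x_1x_2)$ together with $tr(1)=2-z$, and your invertibility argument for $u-2c$ can be bypassed by simply noting that $2^n c=0$ because $2^n=0$ already holds in $\pi_0(L_{K(1)}R^E_{2n,2^n})$.
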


Let $E$ denote the $2$-completion of $KU$.  This $E$ is a height one Morava $E$-theory with $E_0 \cong \mathbb{Z}_2$ and $[2](x)=2x-x^2 \in E_0[[x]]$.  Since $K(1) \smsh E$ is a wedge of suspensions of $K(1)$, we see that $K(1) \smsh E \smsh R_{2n,n}$ is null if and only if $K(1) \smsh R_{2n,n}$ is null.  Thus, to prove Theorem \ref{HeightOneEvenThm} it suffices to prove that 
$$L_{K(1)} \left( E \smsh R_{2n,n} \right) \simeq L_{K(1)} R^E_{2n,2^n}$$
is the trivial $\mathbb{E}_{2n}$-$E$-algebra.

By Corollary \ref{PowOpExplicit}, there is a diagram of $\mathbb{Z}_2$-modules

\begin{equation*} \label{eqndiagram}
\begin{tikzcd}[column sep = large] 
\mathbb{Z}_2 \arrow{r}{\text{unit}} & \pi_0\left(L_{K(1)} R^E_{2n,2^n}\right) \\
\left(z\frac{\mathbb{Z}_2[z]}{2z-z^2}\right)^{\vee} \arrow{u}{\bar{P}(2^n)} \arrow{r} & \left(z^{n+1} \frac{\mathbb{Z}_2[z]}{2z-z^2} \right)^{\vee}. \arrow{u}
\end{tikzcd} \tag{$\star$}
\end{equation*}

There is a natural identification of $\mathbb{Z}_2$-modules $\left(z\frac{\mathbb{Z}_2[z]}{2z-z^2}\right) \cong \mathbb{Z}_2$, in which the element $z^k$ is identified with $2^k$.  We view $$\left(z\frac{\mathbb{Z}_2[z]}{2z-z^2}\right)^{\vee}$$
as the free $\mathbb{Z}_2$-module on the single dual basis element $\delta_{z}$, while
$$\left(z^{n+1} \frac{\mathbb{Z}_2[z]}{2z-z^2} \right)^{\vee}$$
is the free $\mathbb{Z}_2$-module with basis $\delta_{z^{n+1}}$.  The map
$$\left(z\frac{\mathbb{Z}_2[z]}{2z-z^2}\right)^{\vee}  \longrightarrow \left(z^{n+1} \frac{\mathbb{Z}_2[z]}{2z-z^2} \right)^{\vee}$$
sends $\delta_{z}$ to $2^{n} \delta_{z^{n+1}}$

\begin{lem}
The map $\bar{P}(2^n)$ sends $\delta_{z}$ to $2^{n-1}$ plus some multiple of $2^n$.
\end{lem}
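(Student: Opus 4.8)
The plan is to unwind the definition of $\bar P(2^n)$ given in Corollary \ref{PowOpExplicit} into an explicit polynomial identity in $E^0(B\Sigma_2) \cong \mathbb{Z}_2[z]/(2z-z^2)$ and then finish with a one-line binomial calculation. By construction $\bar P(2^n)$ is obtained by pairing reduced homology classes of $\mathbb{RP}^\infty$ against the power operation class $P(2^n) \in \widetilde{E}^0(\mathbb{RP}^\infty) = z\,\mathbb{Z}_2[z]/(2z-z^2)$. If we expand $P(2^n) = a_0 \cdot 1 + a_1 \cdot z$ in the $\mathbb{Z}_2$-basis $\{1,z\}$ of $\mathbb{Z}_2[z]/(2z-z^2)$, then, since $\delta_z$ is the basis element dual to $z$, the value $\bar P(2^n)(\delta_z)$ is exactly the coefficient $a_1$. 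So it is enough to show that $a_1 \equiv 2^{n-1} \pmod{2^n}$.

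First I would reduce to computing $P(2)$: multiplicativity of the power operation gives $P(2^n) = P(2)^n$, and the mixed Cartan formula $P(x_1 + x_2) = P(x_1) + P(x_2) + tr(x_1 x_2)$ together with $P(1) = 1$ gives $P(2) = P(1+1) = 2 + tr(1)$. The one genuine input is the transfer class $tr(1) \in E^0(B\Sigma_2)$. For $E = KU_2$ this is the transfer along the universal double cover $E\Sigma_2 \to B\Sigma_2$; under the Atiyah--Segal identification of $E^0(B\Sigma_2)$ with the completed representation ring $\mathbb{Z}_2[\sigma]/(\sigma^2 - 1)$ it sends $1$ to the regular representation $1 + \sigma$, and since $\sigma = 1 - z$ (as $z$ is the Chern class of the complexified canonical line bundle) we obtain $tr(1) = 2 - z$. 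Equivalently $tr(1) = [2](z)/z$, which is consistent with the requirement that $tr$ followed by restriction along $\mathbb{RP}^\infty \to \mathrm{pt}$ (that is, $z \mapsto 0$) be multiplication by $2$. Hence $P(2) = 2 + (2 - z) = 4 - z$ and $P(2^n) = (4 - z)^n$; note the sanity check $P(2^n)|_{z=0} = 4^n = (2^n)^2$.

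It then remains to expand $(4 - z)^n$ in $\mathbb{Z}_2[z]/(2z - z^2)$, using $z^k = 2^{k-1} z$ for $k \ge 1$. The coefficient of $z$ in $(4-z)^n = \sum_{k} \binom{n}{k} 4^{n-k}(-z)^k$ is
\[
a_1 = \sum_{k=1}^{n} \binom{n}{k} 4^{n-k} (-1)^k 2^{k-1} = \tfrac{1}{2}\big((4-2)^n - 4^n\big) = 2^{n-1} - 2^{2n-1} = 2^{n-1}(1 - 2^n),
\]
which for $n \ge 1$ is congruent to $2^{n-1}$ modulo $2^n$. This proves the lemma.

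The step I expect to be the main obstacle is not any single computation but the bookkeeping in the identification of Corollary \ref{PowOpExplicit}: one must be careful that $\bar P(2^n)(\delta_z)$ genuinely picks out the coefficient of $z$ (rather than that of $1$, or their sum) in $P(2^n)$, which involves tracking the reduced/unreduced splitting of $E^0(\mathbb{RP}^\infty_+)$, the duality between $z\,\mathbb{Z}_2[z]/(2z-z^2)$ and its rank-one free dual, and the replacement of $P(1+x)$ by $P(x)$ used in the proof of Proposition \ref{BetterPowOp}. The correct normalization of $tr(1)$ also merits a careful check; once these are settled, everything else is routine.
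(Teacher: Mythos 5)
Your proof is correct and follows essentially the same route as the paper: identify $\bar{P}(2^n)(\delta_z)$ with the coefficient of $z$ in $P(2^n)\in E^0(B\Sigma_2)\cong \mathbb{Z}_2[z]/(2z-z^2)$ and compute it from the stated relations for $P$ and $tr$ together with $tr(1)=[2](z)/z=2-z$. The only difference is bookkeeping: the paper inducts additively via $P(2^n)=2P(2^{n-1})+2^{2n-2}tr(1)$ to conclude $P(2^n)\equiv 2^{n-1}tr(1) \pmod{2^n}$, whereas you use multiplicativity to write $P(2^n)=(2+tr(1))^n=(4-z)^n$ and evaluate a closed binomial sum, giving the exact value $2^{n-1}(1-2^n)$; both yield $2^{n-1}$ modulo $2^n$.
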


\begin{proof}
From the expression 
$$P(2^n)=P(2^{n-1}) +P(2^{n-1})+tr(2^{n-1} \cdot 2^{n-1})=2 P(2^{n-1})+2^{2n-2} tr(1),$$ together with the base case $P(1)=1$, we inductively learn that
$$P(2^n) \equiv 2^{n-1} tr(1) \text{ modulo $2^n$}.$$
The lemma then follows from the fact (see, e.g., \cite{HKR}), that
$$tr(1)=\frac{[2](z)}{z} = 2-z.$$
\end{proof}

Using the lemma, we may rewrite (\ref{eqndiagram}) as follows:

$$
\begin{tikzcd}[column sep = huge]
\mathbb{Z}_2 \arrow{r}{\text{unit}} & \pi_0 \left(L_{K(1)} R^E_{2n,2^n}\right) \\ \\
\mathbb{Z}_2 \delta_z   \arrow{r}{\delta_z \mapsto 2^n \delta_{z^{n+1}}}  \arrow{uu}{\delta_z \mapsto 2^{n-1}+m2^n} & \mathbb{Z}_2 \delta_{z^{n+1}} , \arrow{uu}
\end{tikzcd}
$$

and from this we learn that $2^{n-1}$ is a multiple of $2^n$ in $\pi_0 \left(L_{K(1)} R^E_{2n,2^n}\right)$.  Since $2^n=0$ in this $\mathbb{Z}_2$-module, we learn that $2^{n-1}=0$ as well.  Inducting on $n$, we conclude that $1=0$ in $\pi_0 \left(L_{K(1)} R^E_{2n,2^n}\right)$, and so $L_{K(1)} R^E_{2n,2^n}$ is the trivial $\mathbb{E}_{2n}$-algebra.

\section{Calculations at height two} \label{height2}

In Section \ref{height1even}, we showed that any $\mathbb{E}_4$-algebra $R$ such that $4=0$ in $\pi_0(R)$ must be $K(1)$-acyclic.  In this section, we ask whether such an algebra must also be $K(2)$-acyclic.  We cannot settle the question, but we do show that an $\mathbb{E}_{12}$-structure is enough to force $R$ to be $K(2)$-acyclic.

We hope to return to other primes and other heights in future work, where we expect to produce bounds generalizing the $12$ appearing here.  Nonetheless, the analysis of this section well demonstrates the limits of current technology, and we hope the exposition inspires work on the following:

\begin{qst}
Is it possible to construct a non-trivial, $K(2)$-local $\mathbb{E}_{p^2}$-algebra with $p^2=0$?
\end{qst}

In the note \cite{RezkPower2}, Rezk uses an explicit elliptic curve to define a $K(2)$-local $\mathbb{E}_\infty$-ring $E$ with
$$\pi_0(E) \cong \mathbb{Z}_2[[a]].$$
The ring spectrum $E$ is a version of height $2$ Morava $E$-theory at the prime $2$, and receives a complex orientation from $BP$ that sends $v_1$ to a unit multiple of $a$.  The following explicit formulae are easily derived from those given in \cite{RezkPower2} (our notation differs from his in that we use $P$ to denote the full power operation rather than its projection onto the quotient by the transfer ideal):

\begin{enumerate}
\item $E^0(B\Sigma_2) \cong E^0(\mathbb{RP}^{\infty}) \cong \mathbb{Z}_2[[a]][z]/(z^4-az^2-2z)$.
\item $\text{tr}(1)=2+az-z^3$.
\item $P(a) = a^2+3z-az^2$.
\end{enumerate}

\begin{prop} \label{Workhorse}
Let $x$ denote an element in the maximal ideal of $\pi_0(E)$, and suppose that $P(x)=x^2+p_1 z + p_2 z^2 + p_3 z^3$, where $p_1,p_2,p_3 \in \mathbb{Z}_2[[a]]$.  Suppose that $R$ is a $K(2)$-local, $\mathbb{E}_4$-$E$-algebra, and note that the unit map $\pi_0(E) \rightarrow \pi_0(R)$ allows us to view $x,p_1,p_2,$ and $p_3$ as elements in $\pi_0(R)$.  Then, if $x=0$ in $\pi_0(R)$, there exist elements $r,s,t \in \pi_0(R)$ such that:
$$p_1=2s,$$
$$p_2=as+2t, \text{ and}$$
$$p_3=r+at.$$
\end{prop}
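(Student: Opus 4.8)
The plan is to feed the hypotheses into Corollary \ref{PowOpExplicit} and then extract the three relations by explicit linear algebra over $E_0 = \pi_0(E) \cong \mathbb{Z}_2[[a]]$, using the formulas for $E^0(\mathbb{RP}^\infty)$ from Section \ref{height2}. First I would invoke the universal property: since $x = 0$ in $\pi_0(R)$ and $R$ is a $K(2)$-local $\mathbb{E}_4$-$E$-algebra, there is an $\mathbb{E}_4$-$E$-algebra map $R^E_{4,x} \to R$ (the restatement of \cite[Theorem 4.10]{ThomRing} above), hence a map $L_{K(2)} R^E_{4,x} \to R$. Since $E$ is the Morava $E$-theory of a height-$2$ formal group over the perfect field $\mathbb{F}_2$, Corollary \ref{PowOpExplicit} applies with $h = 2$, $n = 4$, and our chosen $x$; composing its diagram with $\pi_0$ of the map above produces a commuting square of $E_0$-modules
\[
\begin{tikzcd}[column sep=large]
E_0 \arrow{r}{\text{unit}} & \pi_0(R) \\
\left(z\, E_0[z]/[2](z)\right)^{\vee} \arrow{u}{\overline{P}(x)} \arrow{r} & \left(\widetilde{E}^0(\mathbb{RP}^\infty_5)\right)^{\vee} \arrow{u}
\end{tikzcd}
\]

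Next I would identify the two unfamiliar arrows. By definition $\overline{P}(x)$ is the $E_0$-linear functional given by pairing against the reduced part of $P(x) = x^2 + p_1 z + p_2 z^2 + p_3 z^3 \in E^0(\mathbb{RP}^\infty) \cong E_0[z]/[2](z)$; concretely, in the $E_0$-basis $z, z^2, z^3$ of $\widetilde{E}^0(\mathbb{RP}^\infty) = z\, E_0[z]/[2](z)$ the dual basis element $\delta_{z^k}$ is sent to $p_k$. For the bottom arrow I would use the computation $E^0(\mathbb{RP}^4) \cong E^0(\mathbb{RP}^5) \cong E_0[z]/([2](z), z^3)$ recalled in Section \ref{height2}: applying $\widetilde{E}^0(-)$ to the cofiber sequence $\Sigma^\infty_+\mathbb{RP}^4 \to \Sigma^\infty_+\mathbb{RP}^\infty \to \mathbb{RP}^\infty_5$ identifies $\widetilde{E}^0(\mathbb{RP}^\infty_5)$ with the ideal $(z^3) \subseteq E_0[z]/[2](z)$. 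Reducing using $z^4 = az^2 + 2z$ (so $z^5 = 2z^2 + az^3$, and $z^6 = az^4 + 2z^3$ adds nothing new), this ideal is the free $E_0$-module on $z^3$, $z^4 = 2z + az^2$, and $z^5 = 2z^2 + az^3$, and the bottom arrow is the $E_0$-linear dual of its inclusion into $z\, E_0[z]/[2](z)$.

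Since every module in the square is free and finitely generated over $E_0$, I would dualize it; commutativity then says precisely that the reduced class $p_1 z + p_2 z^2 + p_3 z^3$, regarded now in $\widetilde{E}^0(\mathbb{RP}^\infty)\otimes_{E_0}\pi_0(R)$, lies in the image of $\widetilde{E}^0(\mathbb{RP}^\infty_5)\otimes_{E_0}\pi_0(R) = \pi_0(R)\langle z^3, z^4, z^5\rangle$. That is, there exist $r, s, t \in \pi_0(R)$ with
\[
p_1 z + p_2 z^2 + p_3 z^3 = r\, z^3 + s\,(2z + az^2) + t\,(2z^2 + az^3),
\]
and comparing the coefficients of $z$, $z^2$, and $z^3$ yields $p_1 = 2s$, $p_2 = as + 2t$, and $p_3 = r + at$. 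The real content is entirely in Corollary \ref{PowOpExplicit}; the part to handle with care is the bookkeeping of this last step — in particular orienting the $E_0$-linear dual so that the coefficients $2$ and $a$ land in the asserted slots — together with the minor check that the Thom spectrum $\mathbb{RP}^\infty_{n+1}$ appearing in Corollary \ref{PowOpExplicit} for $n = 4$, namely $\mathbb{RP}^\infty_5$, indeed corresponds to the submodule $(z^3)$, which follows from $E^0(\mathbb{RP}^5) \cong E_0[z]/([2](z), z^3)$.
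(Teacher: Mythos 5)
Your proposal is correct and follows essentially the same route as the paper's own proof: reduce to (or map from) the universal algebra $L_{K(2)}R^E_{4,x}$, apply Corollary \ref{PowOpExplicit} with $\widetilde{E}^0(\mathbb{RP}^\infty_5)$ identified as the ideal $(z^3)$, free on $z^3$, $z^4=2z+az^2$, $z^5=2z^2+az^3$, and read off the three relations; the paper just phrases the last step as a chase of the dual basis elements $\delta_{z^3},\delta_{z^4},\delta_{z^5}$ rather than your un-dualized coefficient comparison, which is the same computation. One small caveat: you do not need (and do not have) freeness of $\pi_0(R)$ over $E_0$ -- only the bottom two modules need to be free and finitely generated, so that maps out of their duals into any $E_0$-module correspond to elements of $M\otimes_{E_0}\pi_0(R)$ and $N\otimes_{E_0}\pi_0(R)$.
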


\begin{proof}
It suffices to show the proposition for $R=L_{K(2)} R^E_{4,x}$, the universal $K(2)$-local, $\mathbb{E}_4$-$E$-algebra with $x=0$.  According to Corollary \ref{PowOpExplicit}, there is a diagram of $E_0$-modules

$$
\begin{tikzcd}
E_0 \arrow{r} & \pi_0(L_{K(h)} R^E_{n,x}) \\
\left(z \frac{E_0[z]}{(z^4-az^2-2z)}\right)^{\vee} \arrow{r} \arrow{u}{\overline{P}(x)} & \left(z^{3} \frac{E_0[z]}{(z^4-az^2-2z)}\right)^{\vee} \arrow{u}
\end{tikzcd}
$$

A basis for $\left(z \frac{E_0[z]}{(z^4-az^2-2z)}\right)^{\vee}$ is given by $\delta_z,\delta_{z^2},$ and $\delta_{z^3}$, while a basis for $\left(z^{3} \frac{E_0[z]}{(z^4-az^2-2z)}\right)^{\vee}$ is given by $\delta_{z^3}, \delta_{z^4},$ and $\delta_{z^5}$.

From the relations $z^3=z^3, z^4=az^2+2z$, and $z^5=az^3+2z^2$, we learn that the lower horizontal map in the diagram is given by
$$\delta_z \mapsto 2 \delta_{z^4},$$
$$\delta_{z^2} \mapsto a \delta_{z^4} + 2 \delta_{z^5},$$
$$\delta_{z^3} \mapsto 1 \delta_{z^3} + a \delta_{z^5}.$$

Since $\overline{P}(x)$ takes $\delta_z$ to $p_1$, $\delta_{z^2}$ to $p_2$, and $\delta_{z^3}$ to $p_3$, we learn that there are elements $r,s,$ and $t$ in $\pi_0\left(L_{K(h)} R^E_{4,x}\right)$ such that
$$p_1=2s,$$
$$p_2=as+2t, \text{ and}$$
$$p_3=r+at.$$
\end{proof}

\begin{exm} Suppose $x=2$ in Proposition \ref{Workhorse}.  Then $$P(x)=P(1+1)=2+tr(1)=4+az-z^3,$$ and so $p_1=a,p_2=0,$ and $p_3=-1$.  We learn that there are elements $r,s,t \in \pi_0(R)$ such that
$$a=2s,$$
$$0=as+2t, \text{ and}$$
$$-1=r+at.$$
Since $2=0$ in $\pi_0(R)$, so does $a$.
\end{exm}

\begin{exm} \label{helpexm}
Suppose $x=a$ in Proposition \ref{Workhorse}.  Then $P(x)=a^2+3z-az^2$, and so $p_1=3,p_2=-a,$ and $p_3=0$.  We learn that there exist $r,s,t \in \pi_0(R)$ such that
$$3=2s,$$
$$-a=as+2t, \text{ and}$$
$$0=r+at.$$
In particular, $3$ is a multiple of $2$ in $\pi_0(R)$, and since $R$ is $K(2)$-local it follows that $R$ is trivial.  From the previous example, we learn that the free $\mathbb{E}_2$-algebra with $2=0$ is $K(2)$-acyclic.
\end{exm}

\begin{rmk}
The above examples are not surprising in light of the fact that the free $\mathbb{E}_2$-algebra with $2=0$ is $H\mathbb{F}_2$.
\end{rmk}

\begin{exm}
Suppose $x=4$ in Proposition \ref{Workhorse}.  Then
$$P(x)=P(2+2)=4+6tr(1) \equiv 2az+2z^3 \text{ modulo 4}$$

Since $4=0$ in $\pi_0(R)$, we learn that there are some $r,s,t \in \pi_0(R)$ such that
$$2a = 2s,$$
$$0=as + 2t, \text{ and}$$
$$2= r + a t.$$

Multiplying the second equation by $2$, we learn that $2as=0$.  Multiplying the first equation by $a$, we conclude that $2a^2=0$.  This implies in particular that $2=0$ in $a^{-1} R$, and by the previous examples $a^{-1} R$ must be trivial.  In short, some power of $a$ must be $0$ in $\pi_0(R)$, and we will soon see that $a^6=0$.
\end{exm}

\begin{exm}
Suppose $x=2a^2$ in Proposition \ref{Workhorse}, so 
$$P(x)=P(2)P(a)^2=(2+tr(1))P(a)^2 \equiv (a^4-12a)z^3+18z^2+a^5 z \text{ modulo }2a^2.$$  We learn that there are some $r,s,t \in \pi_0(R)$ such that,
$$a^5= 2s,$$
$$18=as + 2t, \text{ and}$$
$$a^4-12a = r +a t.$$
Multiplying the middle equation by $2$, and substituting the first equation, we learn that $a^6$ is divisible by $4$.  If $4=0$ in $\pi_0(R)$, it follows that $a^6=0$.
\end{exm}

\begin{exm}
Now, suppose that $R$ is a $K(2)$-local $E$-algebra in which $4=0$.  According to the previous examples, we also learn that $2a^2=0$ and $a^6=0$.  One might hope to extract further information by using Proposition \ref{Workhorse} with $x=a^6$.  Note that
$$P(a^6)=P(a)^6 \equiv  2az+a^2z^2 + 2z^3 \text{ modulo } (4,2a^2,a^6).$$
We conclude that there are $r,s,t \in \pi_0(R)$ such that
$$2a=2s,$$
$$a^2=as +2t, \text{ and}$$
$$2=r+at.$$
It does not appear possible to learn anything new about $\pi_0(R)$ from these equations.
\end{exm}

We now explore the appropriate variant of Proposition \ref{Workhorse} for $K(2)$-local $\mathbb{E}_{12}$-$E$-algebras.

\begin{prop} \label{Workhouse2}
Let $x$ denote an element in the maximal ideal of $\pi_0(E)$, and suppose that $P(x)=x^2+p_1 z + p_2 z^2 + p_3 z^3$, where $p_1,p_2,p_3 \in \mathbb{Z}_2[[a]]$.  Suppose that $R$ is a $K(2)$-local, $\mathbb{E}_{12}$-$E$-algebra, and note that the unit map $\pi_0(E) \rightarrow \pi_0(R)$ allows us to view $x,p_1,p_2,$ and $p_3$ as elements in $\pi_0(R)$.  Then, if $x=0$ in $\pi_0(R)$, there exist elements $r,s,t \in \pi_0(R)$ such that:
$$p_1=4r+2a^2 s + 8a t,$$
$$p_2=4a r + (a^3+4)s + 6a^2 t, \text{ and}$$
$$p_3=a^2 r + 4a s + (a^3+4)t.$$
\end{prop}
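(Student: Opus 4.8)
The plan is to follow the same strategy as in the proof of Proposition \ref{Workhorse}, replacing the $\mathbb{E}_4$-input (which controlled $\mathbb{RP}^3$) with the $\mathbb{E}_{12}$-input (which controls $\mathbb{RP}^{11}$). By Corollary \ref{PowOpExplicit}, applied with $n=12$ so that $R^E_{n,x}=R^E_{12,x}$, there is a diagram of $E_0$-modules
$$
\begin{tikzcd}
E_0 \arrow{r} & \pi_0(L_{K(2)} R^E_{12,x}) \\
\left(z \frac{E_0[z]}{(z^4-az^2-2z)}\right)^{\vee} \arrow{r} \arrow{u}{\overline{P}(x)} & \left(z^{13} \frac{E_0[z]}{(z^4-az^2-2z)}\right)^{\vee}, \arrow{u}
\end{tikzcd}
$$
where $\overline{P}(x)$ takes the dual basis elements $\delta_z,\delta_{z^2},\delta_{z^3}$ to $p_1,p_2,p_3$ respectively (using that $x=0$ so the $x^2$-term contributes nothing, exactly as in Proposition \ref{Workhorse}). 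As before, it suffices to treat the universal case $R=L_{K(2)}R^E_{12,x}$, since any other $R$ receives an $\mathbb{E}_{12}$-$E$-algebra map from it.

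The heart of the computation is to identify the lower horizontal map. The source has $E_0$-basis $\delta_z,\delta_{z^2},\delta_{z^3}$ (the nonzero-degree parts of $z\frac{E_0[z]}{(z^4-az^2-2z)}$ of degrees $1,2,3$), and the target $z^{13}\frac{E_0[z]}{(z^4-az^2-2z)}$ has $E_0$-basis $\delta_{z^{13}},\delta_{z^{14}},\delta_{z^{15}}$. Dualizing the inclusion of free modules, the entries of the $3\times 3$ matrix are read off from expressing $z^{13},z^{14},z^{15}$ in terms of the basis $\{z^j\}$ modulo the relation $z^4=az^2+2z$. So I would first compute, by repeated reduction using $z^4 \equiv az^2+2z$, the coefficients of $z$, $z^2$, $z^3$ in each of $z^{13}$, $z^{14}$, $z^{15}$ as polynomials in $a$ and $2$; concretely one iterates $z^{k+1}=z\cdot z^k$ and reduces. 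This yields a matrix $M$ over $\mathbb{Z}_2[[a]]$, and the relations asserted in the proposition — with the coefficients $4, 2a^2, 8a$ in the $p_1$-row, $4a, a^3+4, 6a^2$ in the $p_2$-row, and $a^2, 4a, a^3+4$ in the $p_3$-row — are precisely the statement that $(p_1,p_2,p_3)^{T}$ lies in the image of $M$ applied to $(r,s,t)^{T}$, i.e. $\overline{P}(x)$ factors through the inclusion. Since $\overline{P}(x)$ is only defined up to the freedom of the factorization, the existence of $r,s,t$ is exactly commutativity of the Corollary \ref{PowOpExplicit} diagram.

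The main obstacle — really the only nontrivial step — is the bookkeeping of the power-of-$z$ reductions: one must carry the recursion $z^{k}\mapsto z^{k-3}(az^2+2z)=az^{k-1}+2z^{k-2}$ down from $k=15$ to $k\le 3$ without error, tracking the growth in powers of $a$ and $2$. A useful consistency check is to verify directly that the three displayed vectors of coefficients are the columns (equivalently rows, after transpose) of the reduction matrix for $z^{13},z^{14},z^{15}$; for instance the $p_1$-row $(4,2a^2,8a)$ should equal the coefficients of $z,z^2,z^3$ in the reductions obtained by pairing $\delta_z$ against $z^{13},z^{14},z^{15}$. I would then simply record the resulting matrix and read off the three equations, mirroring verbatim the final paragraph of the proof of Proposition \ref{Workhorse}.
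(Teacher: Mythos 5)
Your overall strategy is the paper's: reduce to the universal case $R=L_{K(2)}R^E_{12,x}$, apply Corollary \ref{PowOpExplicit}, and read the three relations off the dual of an inclusion of free $E_0$-modules via a $3\times 3$ matrix of coefficients. But the indexing of the stunted projective spectrum is wrong, and it is exactly the point where the numbers in the statement come from. An $\mathbb{E}_{12}$-structure, after looping once, gives control over $\mathrm{Config}(2,\mathbb{R}^{13})_{h\Sigma_2}\simeq\mathbb{RP}^{12}$ (not $\mathbb{RP}^{11}$, and certainly not $\mathbb{RP}^{24}$), so the composite $P(x)$ followed by the unit factors through $\mathbb{RP}^{\infty}_{13}$. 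Since $E^0(\mathbb{RP}^{12})\cong E_0[z]/\bigl([2](z),z^{7}\bigr)$, the relevant submodule of $E^0(\mathbb{RP}^\infty)$ is $z^{7}\,E_0[z]/(z^4-az^2-2z)$, with $E_0$-basis $z^7,z^8,z^9$; the general rule is truncation at $z^{\lfloor n/2\rfloor+1}$, which is how the paper's own proof of Proposition \ref{Workhorse} gets $z^3,z^4,z^5$ for $\mathbb{E}_4$ (the literal exponent $n+1$ in the statement of Corollary \ref{PowOpExplicit} should not be applied verbatim here, as that reading already contradicts the $\mathbb{E}_4$ case). Your target $\bigl(z^{13}E_0[z]/(z^4-az^2-2z)\bigr)^{\vee}$ with dual basis $\delta_{z^{13}},\delta_{z^{14}},\delta_{z^{15}}$ is therefore not justified by an $\mathbb{E}_{12}$-structure: factoring through that dual is a strictly stronger claim (it would require killing the $12$-cells-worth-more skeleton, roughly an $\mathbb{E}_{24}$-structure).

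Moreover, the ``consistency check'' you propose would fail rather than confirm your setup: reducing $z^{13},z^{14},z^{15}$ by $z^4=az^2+2z$ does not produce the coefficient vectors $(4,2a^2,8a)$, $(4a,a^3+4,6a^2)$, $(a^2,4a,a^3+4)$; those are precisely the reductions of $z^{7},z^{8},z^{9}$, namely
$$z^7=a^2z^3+4az^2+4z,\qquad z^8=4az^3+(a^3+4)z^2+2a^2z,\qquad z^9=(a^3+4)z^3+6a^2z^2+8az,$$
whereas the coefficients of $z^{13},z^{14},z^{15}$ lie much deeper in the ideal $(2,a)$ and would yield different (and, from the available structure, unestablished) relations. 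Since you deferred the actual reduction and asserted the answer, the derivation of the three displayed identities is not actually carried out, and with your choice of exponent it cannot be. The fix is simply to replace $z^{13},z^{14},z^{15}$ by $z^{7},z^{8},z^{9}$, compute the three reductions above, and dualize; then $r,s,t$ are the images of $\delta_{z^7},\delta_{z^8},\delta_{z^9}$ in $\pi_0(R)$ and the stated relations follow, exactly as in the paper. (A small additional point: the constant term $x^2$ of $P(x)$ is irrelevant because one works with the reduced homology of $\mathbb{RP}^\infty$, not because $x=0$ in $R$.)
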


\begin{proof}
The proof is the obvious modification of that given for \ref{Workhorse}.  In this case, we examine the map
$$
\begin{tikzcd}
\left(z \frac{E_0[z]}{(z^4-az^2-2z)}\right)^{\vee} \arrow{r} & \left(z^{7} \frac{E_0[z]}{(z^4-az^2-2z)}\right)^{\vee}.
\end{tikzcd}
$$
We learn from the relations 
$$z^7=a^2z^3+4az^2+4z,$$ 
$$z^8=4az^3+(a^3+4)z^2+2a^2z, \text{ and}$$
$$z^9=(a^3+4)z^3+6a^2z^2+8az$$ that 
$$\delta_z \mapsto 4 \delta_{z^7}+2a^2 \delta_{z^8}+8a \delta_{z^9},$$
$$\delta_{z^2} \mapsto  4a \delta_{z^7} + (a^3+4)\delta_{z^8} + 6a^2 \delta_{z^9}, \text{ and}$$
$$\delta_{z^3} \mapsto  a^2 \delta_{z^7} + 4a \delta_{z^8} + (a^3+4)\delta_{z^9}.$$
\end{proof}

\begin{cor} Suppose that $R$ is a $K(2)$-local, $\mathbb{E}_{12}$-algebra.  Then, if $4=0$ in $\pi_0(R)$, it is also true that $1=0$ in $\pi_0(R)$.
\end{cor}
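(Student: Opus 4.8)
The plan is to feed the relations already available for $\mathbb{E}_4$-$E$-algebras back into the stronger power-operation identity that Proposition \ref{Workhouse2} provides for $\mathbb{E}_{12}$-$E$-algebras. First I would reduce to the case that $R$ is an $\mathbb{E}_{12}$-$E$-algebra, since replacing $R$ by $L_{K(2)}(E \smsh R)$ affects neither the hypothesis $4=0$ nor the conclusion $1=0$: $K(2)\smsh E$ is a nonzero wedge of suspensions of $K(2)$, so $R$ is $K(2)$-acyclic if and only if $E \smsh R$ is. As $R$ is then in particular a $K(2)$-local $\mathbb{E}_4$-$E$-algebra with $4=0$, the examples following Proposition \ref{Workhorse} supply the relations $2a^2=0$ and $a^6=0$ in $\pi_0(R)$. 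The key point --- and it is essentially the only idea --- is that the choice $x = a^6$, which was shown above to extract nothing new from the $\mathbb{E}_4$-level power operation, becomes decisive once the $\mathbb{E}_{12}$-refinement is in hand.

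Concretely, I would apply Proposition \ref{Workhouse2} with $x = a^6$; this is legitimate because $a^6$ lies in the maximal ideal of $\pi_0(E)$ and $a^6=0$ in $\pi_0(R)$. Using the computation $P(a^6) = P(a)^6 \equiv 2az + a^2 z^2 + 2z^3$ modulo $(4, 2a^2, a^6)$ made above, the images in $\pi_0(R)$ of the coefficients $p_1, p_2, p_3$ are $2a$, $a^2$, and $2$ respectively. Proposition \ref{Workhouse2} then yields $r, s, t \in \pi_0(R)$ with $p_1 = 4r + 2a^2 s + 8at$, $\ p_2 = 4ar + (a^3+4)s + 6a^2 t$, and $p_3 = a^2 r + 4as + (a^3+4)t$. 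Reducing all three equations using $4=0$ and $2a^2=0$, the first collapses to $2a=0$ (which I will not even need), the second becomes $a^2 = a^3 s$, and the third becomes $2 = a^2 r + a^3 t = a^2(r+at)$.

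The endgame is a three-line self-referential elimination. From $a^2 = a^3 s$ one gets $a^4 = (a^2)^2 = (a^3 s)^2 = a^6 s^2 = 0$, hence $a^3 = a\cdot a^2 = a(a^3 s) = a^4 s = 0$, and finally $a^2 = a^3 s = 0$. Substituting $a^2 = 0$ into the third equation gives $2 = 0$ in $\pi_0(R)$. At this stage $R$ is a $K(2)$-local $\mathbb{E}_2$-algebra (indeed $\mathbb{E}_2$-$E$-algebra) in which $2=0$, so the theorem of Hopkins and Mahowald forces it to be trivial: by the universal property $R$ receives a unital map from the $K(2)$-acyclic free $\mathbb{E}_2$-algebra of Example \ref{helpexm}, whence $1=0$ in $\pi_0(R)$. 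I do not expect a genuine obstacle here: all the power-operation bookkeeping is already in place, and the argument is really just the observation that the same $x = a^6$ which was inert for $\mathbb{E}_4$ becomes fatal for $\mathbb{E}_{12}$, via the elimination $a^2 = a^3 s \Rightarrow a^2 = a^6 s^4 = 0$.
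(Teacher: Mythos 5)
Your proof is correct, and it reaches the conclusion by a genuinely different route than the paper. Both arguments start from the $\mathbb{E}_4$-level relations $2a^2=0$ and $a^6=0$ and then feed elements into Proposition \ref{Workhouse2}, but the paper applies that proposition three times, with $x=4$, $x=2a$, and $x=a^3+2$, successively deducing $2a=0$, $2+a^3=0$, and $a=0$ (each application requiring its own expansion of $P(x)$), and then finishes with Example \ref{helpexm}. You instead apply Proposition \ref{Workhouse2} exactly once, with $x=a^6$, recycling the congruence $P(a^6)\equiv 2az+a^2z^2+2z^3$ modulo $(4,2a^2,a^6)$ that the paper had already recorded and dismissed as uninformative at the $\mathbb{E}_4$ level; the resulting relations $a^2=a^3s$ and $2=a^2r+a^3t$, combined with $a^6=0$, force $a^4=0$, then $a^3=0$, then $a^2=0$, hence $2=0$, after which Hopkins--Mahowald (or, staying entirely within the paper's calculus, the $x=2$ example followed by Example \ref{helpexm}) kills the unit. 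I checked the reduction of the three Workhouse2 equations under $4=0$, $2a^2=0$ and the elimination step, and they are sound. Your explicit reduction to the $E$-algebra case via $L_{K(2)}(E\smsh R)$ is also a point in your favor: the corollary is stated for plain $\mathbb{E}_{12}$-algebras while Proposition \ref{Workhouse2} needs an $\mathbb{E}_{12}$-$E$-algebra, and the paper leaves this passage implicit. What your route buys is economy---one new application of the $\mathbb{E}_{12}$-level computation instead of three, with no further $P(x)$ expansions---plus the nice observation that $a^6$, inert for $\mathbb{E}_4$, becomes decisive for $\mathbb{E}_{12}$; what the paper's route buys is the explicit chain of intermediate relations $2a=0$, $2+a^3=0$, $a=0$ in $\pi_0(R)$.
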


\begin{proof}
Since $R$ is an $\mathbb{E}_4$-algebra, the previous examples imply that $2a^2=0$ and $a^6=0$ in $\pi_0(R)$.
Applying Proposition \ref{Workhouse2} with $x=4$, we learn the existence of $r,s,t \in \pi_0(R)$ such that
$$2a = 4r+2a^2 s + 8 a t=0.$$
Applying Proposition \ref{Workhouse2} with $x=2a$, we learn the existence of $r',s',t' \in \pi_0(R)$ such that
$$2+a^3 = 4r'+2a^2s'+8at' = 0.$$
Applying Proposition \ref{Workhouse2} with $x=a^3+2$, we learn the existence of $r'',s'',t'' \in \pi_0(R)$ such that
$$a = 4r''+2a^2s''+8at''=0,$$
and the result follows by Example \ref{helpexm}.
\end{proof}

\bibliographystyle{alpha}
\bibliography{References}

\newcommand{\etalchar}[1]{$^{#1}$}
\begin{thebibliography}{BMMS86}

\bibitem[ABG{\etalchar{+}}14]{ThomDef}
Matthew Ando, Andrew~J. Blumberg, David Gepner, Michael~J. Hopkins, and Charles
  Rezk.
\newblock An {$\infty$}-categorical approach to {$R$}-line bundles,
  {$R$}-module {T}hom spectra, and twisted {$R$}-homology.
\newblock {\em J. Topol.}, 7(3):869--893, 2014.

\bibitem[ACB14]{ThomRing}
Omar Antol{\'i}n-Camarena and Tobias Barthel.
\newblock A simple universal property of {T}hom ring spectra.
\newblock {\em ar{X}iv preprint ar{X}iv:1411.7988}, 2014.

\bibitem[BMMS86]{HinfBook}
R.~R. Bruner, J.~P. May, J.~E. McClure, and M.~Steinberger.
\newblock {\em {$H_\infty $} ring spectra and their applications}, volume 1176
  of {\em Lecture Notes in Mathematics}.
\newblock Springer-Verlag, Berlin, 1986.

\bibitem[Bou87]{BousfieldFunctor}
A.~K. Bousfield.
\newblock Uniqueness of infinite deloopings for {$K$}-theoretic spaces.
\newblock {\em Pacific J. Math.}, 129(1):1--31, 1987.

\bibitem[CMN79a]{CMNII}
F.~R. Cohen, J.~C. Moore, and J.~A. Neisendorfer.
\newblock The double suspension and exponents of the homotopy groups of
  spheres.
\newblock {\em Ann. of Math. (2)}, 110(3):549--565, 1979.

\bibitem[CMN79b]{CMNI}
F.~R. Cohen, J.~C. Moore, and J.~A. Neisendorfer.
\newblock Torsion in homotopy groups.
\newblock {\em Ann. of Math. (2)}, 109(1):121--168, 1979.

\bibitem[Dav95]{DavisBook}
Donald~M. Davis.
\newblock Computing {$v_1$}-periodic homotopy groups of spheres and some
  compact {L}ie groups.
\newblock In {\em Handbook of algebraic topology}, pages 993--1048.
  North-Holland, Amsterdam, 1995.

\bibitem[DHS88]{NilpotenceI}
Ethan~S. Devinatz, Michael~J. Hopkins, and Jeffrey~H. Smith.
\newblock Nilpotence and stable homotopy theory. {I}.
\newblock {\em Ann. of Math. (2)}, 128(2):207--241, 1988.

\bibitem[DL14]{LawsonDavis}
Daniel~G. Davis and Tyler Lawson.
\newblock Commutative ring objects in pro-categories and generalized {M}oore
  spectra.
\newblock {\em Geom. Topol.}, 18(1):103--140, 2014.

\bibitem[Grb06]{Jelena}
Jelena Grbi\'c.
\newblock Universal homotopy associative, homotopy commutative {$H$}-spaces and
  the {$EHP$} spectral sequence.
\newblock {\em Math. Proc. Cambridge Philos. Soc.}, 140(3):377--400, 2006.

\bibitem[Hah16]{HahnBousfield}
Jeremy Hahn.
\newblock On the {B}ousfield classes of {$H_\infty$}-ring spectra.
\newblock {\em \newline ar{X}iv preprint ar{X}iv:1612.04386}, 2016.

\bibitem[HKR00]{HKR}
Michael~J. Hopkins, Nicholas~J. Kuhn, and Douglas~C. Ravenel.
\newblock Generalized group characters and complex oriented cohomology
  theories.
\newblock {\em J. Amer. Math. Soc.}, 13(3):553--594, 2000.

\bibitem[HS98]{NilpotenceII}
Michael~J. Hopkins and Jeffrey~H. Smith.
\newblock Nilpotence and stable homotopy theory. {II}.
\newblock {\em Ann. of Math. (2)}, 148(1):1--49, 1998.

\bibitem[LMSM86]{Lewis}
L.~G. Lewis, Jr., J.~P. May, M.~Steinberger, and J.~E. McClure.
\newblock {\em Equivariant stable homotopy theory}, volume 1213 of {\em Lecture
  Notes in Mathematics}.
\newblock Springer-Verlag, Berlin, 1986.
\newblock With contributions by J. E. McClure.

\bibitem[MNN15]{MNN}
Akhil Mathew, Niko Naumann, and Justin Noel.
\newblock On a nilpotence conjecture of {J}. {P}. {M}ay.
\newblock {\em J. Topol.}, 8(4):917--932, 2015.

\bibitem[Nei81]{Neisendorfer}
Joseph~A. Neisendorfer.
\newblock {$3$}-primary exponents.
\newblock {\em Math. Proc. Cambridge Philos. Soc.}, 90(1):63--83, 1981.

\bibitem[Nis73]{Nishida}
Goro Nishida.
\newblock The nilpotency of elements of the stable homotopy groups of spheres.
\newblock {\em J. Math. Soc. Japan}, 25:707--732, 1973.

\bibitem[Rez98]{HopkinsMiller}
Charles Rezk.
\newblock Notes on the {H}opkins-{M}iller theorem.
\newblock In {\em Homotopy theory via algebraic geometry and group
  representations ({E}vanston, {IL}, 1997)}, volume 220 of {\em Contemp.
  Math.}, pages 313--366. Amer. Math. Soc., Providence, RI, 1998.

\bibitem[Rez06]{RezkLog}
Charles Rezk.
\newblock The units of a ring spectrum and a logarithmic cohomology operation.
\newblock {\em J. Amer. Math. Soc.}, 19(4):969--1014, 2006.

\bibitem[Rez08]{RezkPower2}
Charles Rezk.
\newblock Power operations for {M}orava {E} theory of height 2 at the prime 2.
\newblock {\em \newline ar{X}iv preprint ar{X}iv:0812.1320}, 2008.

\bibitem[Wan15]{Guozhen}
Guozhen Wang.
\newblock Unstable chromatic homotopy theory.
\newblock {\em \newline {A}vailable at http://hdl.handle.net/1721.1/99321},
  2015.

\end{thebibliography}

\end{document}